\newtheorem{theorem}{Theorem}[section]
\newtheorem{lemma}[theorem]{Lemma}
\newtheorem{proposition}[theorem]{Proposition}
\newtheorem{corollary}[theorem]{Corollary}
\newtheorem{exAux}[theorem]{Example}
\newenvironment{example}{\begin{exAux} \rm}{\end{exAux}}
\newtheorem{Def}[theorem]{Definition}
\newenvironment{definition}{\begin{Def} \rm}{\end{Def}}
\newtheorem{Note}[theorem]{Note}
\newenvironment{note}{\begin{Note} \rm}{\end{Note}}
\newtheorem{Problem}[theorem]{Problem}
\newtheorem{Rem}[theorem]{Remark}
\newtheorem{Not}[theorem]{Notation}
\newtheorem{Conj}[theorem]{Conjecture}
\newtheorem{Ass}[theorem]{Assumption}
\newenvironment{proof}{\medskip\noindent{\bf Proof.\ }}{\qed\medskip}
\newenvironment{proofof}[1]{\medskip\noindent{\bf Proof  of {#1}.\ 
}}{\qed\medskip}
\newcommand{\qed}{\hfill\mbox{$\Box$\qquad\qquad}}
\newcommand{\F}{\mathbb{F}}
\newcommand{\Mat}{\text{\rm Mat}}
\renewcommand{\b}[1]{( #1 )}
\newcommand{\tr}{\text{\rm tr}}
\newcommand{\vphi}{\varphi}
\renewcommand{\th}{\theta}
\newcommand{\gen}[1]{\langle #1 \rangle}
\newif\ifDRAFT
\begin{document}

\noindent
{\bf \huge Self-dual Leonard pairs}

\bigskip
\noindent
{\bf Abstract}
\\
Let $\F$ denote a field and let $V$ denote a vector space
over $\F$ with finite positive dimension.
Consider a pair $A,A^*$ of diagonalizable $\F$-linear maps on $V$,
each of which acts on an eigenbasis for the other one  in an irreducible tridiagonal fashion.
Such a pair is called a  Leonard pair.
We consider the self-dual case in which there exists an automorphism
of the endomorphism algebra of $V$ that swaps $A$ and $A^*$. 
Such an automorphism is unique, and called the duality $A \leftrightarrow A^*$.
In the present paper we give a comprehensive description of this duality.
In particular, we display an invertible $\F$-linear map $T$ on $V$
such that the map $X \mapsto T X T^{-1}$ is the duality $A \leftrightarrow A^*$.
We express $T$ as a polynomial in $A$ and $A^*$.
We describe how $T$ acts on $4$ flags, $12$ decompositions, and 24 bases for $V$.

\medskip
\noindent
{\bf Keywords} \\
Leonard pair, tridiagonal matrix, self-dual

\medskip
\noindent
{\bf MSC:} \textsf{17B37, 15A21}

\medskip\noindent
Kazumasa Nomura\footnote{\em Email: knomura@pop11.odn.ne.jp},
\ 
Paul Terwilliger\footnote{\em Email: terwilli@math.wisc.edu}

\medskip
\noindent
Tokyo Medical and Dental University,
Ichikawa, 272-0827, Japan
\\
\noindent
Department of Mathematics,
University of Wisconsin,
Madison,  WI53706, USA\\

\section{Introduction}
\label{sec:intro}

Let $\F$ denote a field and let $V$ denote a vector space over $\F$ with
finite positive dimension. 
We consider a pair $A, A^*$ of diagonalizable $\F$-linear maps on $V$, 
each of which acts on an eigenbasis for the other one
in an irreducible tridiagonal fashion. 
Such a pair is called a Leonard pair (see \cite[Definition 1.1]{T:Leonard}).
The Leonard pair $A, A^*$ is said to be self-dual whenever 
there exists an automorphism of the endomorphism algebra of $V$
that swaps $A$ and $A^*$.
In this case such an automorphism is unique, and called the duality $A \leftrightarrow A^*$.

The literature contains many examples of self-dual Leonard pairs.
For instance
(i)
the Leonard pair associated with an irreducible module for the Terwilliger algebra 
of the hypercube (see \cite[Corollaries 6.8, 8.5]{Go});
(ii)
a Leonard pair of Krawtchouk type  (see \cite[Definition 6.1]{NT:Krawt});
(iii)
the Leonard pair associated with an irreducible module for the Terwilliger algebra
of a distance-regular graph that has a spin model in the Bose-Mesner algebra
(see \cite[Theorem]{C:thin}, \cite[Theorems 4.1,  5.5]{CN:hyper});
(iv)
an appropriately normalized totally bipartite Leonard pair 
(see \cite[Lemma 14.8]{NT:bipartite});
(v)
the Leonard pair consisting of
any two of a modular Leonard triple $A,B,C$ (see \cite[Definition 1.4]{Cur});
(vi)
the Leonard pair consisting of a pair of opposite generators for
the $q$-tetrahedron algebra,
acting on an evaluation module (see \cite[Proposition 9.2]{IRT}).
The example (i) is a special case of (ii),
and the examples (iii), (iv) are special cases of (v).

Let $A, A^*$ denote a Leonard pair on $V$. 
We can determine whether $A, A^*$ is self-dual in the following way.
By \cite[Lemma 1.3]{T:Leonard} each eigenspace of $A$, $A^*$ has dimension one.
Let $\{\th_i\}_{i=0}^d$ denote an ordering of the eigenvalues of $A$.
For $0 \leq i \leq d$ let $v_i$ denote a $\th_i$-eigenvector for $A$.
The ordering $\{\th_i\}_{i=0}^d$ is said to be standard whenever
$A^*$ acts on the basis $\{v_i\}_{i=0}^d$ in an irreducible tridiagonal fashion.
If the ordering $\{\th_i\}_{i=0}^d$ is standard then the ordering $\{\th_{d-i}\}_{i=0}^d$
is also standard, and no further ordering is standard.
Similar comments apply to $A^*$.
Let $\{\th_i\}_{i=0}^d$ denote a standard
ordering of the eigenvalues of $A$.
Then $A,A^*$ is self-dual if and only if $\{\th_i\}_{i=0}^d$ is a standard
ordering of the eigenvalues of $A^*$ (see \cite[Proposition 8.7]{NT:affine}).

For a given self-dual Leonard pair,
it is not obvious what is the corresponding duality.
The purpose of this paper is to describe this duality.
Our description is summarized as follows.
Let $A, A^*$ denote a self-dual Leonard pair on $V$, 
and let $\{\th_i\}_{i=0}^d$ denote a standard ordering of the eigenvalues of $A$.
By construction $\{\th_i\}_{i=0}^d$ is a standard ordering of 
the eigenvalues of $A^*$. 
For $0 \leq i \leq d$ let $E_i : V \to V$ (resp.\ $E^*_i : V \to V$)
denote the projection 
onto the eigenspace of $A$ (resp.\ $A^*$) corresponding to $\th_i$.
Using the projections $\{E_i\}_{i=0}^d$ and $\{E^*_i\}_{i=0}^d$ 
we define a certain $\F$-linear map $T : V \to V$. 
We show that $T$ is invertible, and the
map $X \mapsto T X T^{-1}$ is the duality $A \leftrightarrow A^*$.
In order to illuminate the nature of $T$, we show how
$T$ acts on $4$ flags, $12$ decompositions, and $24$ bases 
attached to $A, A^*$. 
Here are some details. 
By a flag on $V$ we mean a sequence $\{H_i\}_{i=0}^d$ of subspaces of $V$
such that $H_i$ has dimension $i+1$ for $0 \leq i \leq d$ and
$H_{i-1} \subseteq H_i$ for $1 \leq i \leq d$.
By a decomposition of $V$ we mean a sequence $\{V_i\}_{i=0}^d$ of one dimensional
subspaces whose direct sum is $V$.
For a decomposition $\{V_i\}_{i=0}^d$ of $V$, define 
$H_i = V_0 + V_1 + \cdots + V_i$ for $0 \leq i \leq d$.
The sequence $\{H_i\}_{i=0}^d$ is a flag on $V$, said to be  induced by $\{V_i\}_{i=0}^d$.
Two flags $\{H_i\}_{i=0}^d$ and $\{H'_i\}_{i=0}^d$ on $V$ are called  opposite 
whenever there exists a decomposition $\{V_i\}_{i=0}^d$ of $V$
that induces $\{H_i\}_{i=0}^d$ and $\{V_{d-i}\}_{i=0}^d$ induces $\{H'_i\}_{i=0}^d$. 
In this case $V_i = H_i \cap H'_{d-i}$ for $0 \leq i \leq d$. 
In particular the decomposition $\{V_i\}_{i=0}^d$ is uniquely determined by the
ordered pair $\{H_i\}_{i=0}^d$, $\{H'_i\}_{i=0}^d$;
we say that this ordered pair induces $\{V_i\}_{i=0}^d$.
For each symbol $z$ among the symbols $0, D, 0^*, D^*$
we define a flag $[z]$ on $V$ as follows.
The flag $[0]$ is induced by $\{E_i V\}_{i=0}^d$ and the flag $[D]$ is induced by
$\{E_{d-i} V\}_{i=0}^d$. 
The flag $[0^*]$ is induced by $\{E^*_i V\}_{i=0}^d$ and the
flag $[D^*]$ is induced by $\{E^*_{d-i} V\}_{i=0}^d$.
By \cite[Theorem 7.3]{T:24points}
the flags $[0]$, $[D]$, $[0^*]$, $[D^*]$ are mutually opposite. 
For distinct $z,w$ among the symbols $0,D,0^*,D^*$,
let $[z w]$ denote the decomposition of $V$ induced by $[z]$ and $[w]$.
There are $12$ choices for the ordered pair $z,w$ and this 
gives 12 decompositions of $V$.
For each decomposition, pick a nonzero vector in
each component of the decomposition. 
The resulting sequence of vectors is a basis for $V$. 
We normalize the basis in two ways that seem attractive; 
this yields two bases for each decomposition.
By this procedure we obtain 24 bases for $V$. 
We obtain the action of $T$ on each of these bases. 
As we will see, with respect to four of the bases among the 24,
the matrix representing $T$ is independent of the four bases
and its entries take a very attractive form.

The paper is organized as follows.
In Sections \ref{sec:LP}--\ref{sec:trace} we 
review some background and establish some
basic results for general Leonard pairs.
Starting in Section \ref{sec:selfdual} we consider a self-dual Leonard pair $A,A^*$.
In Section \ref{sec:main} 
we introduce the map $T$ and discuss its basic properties. 
In Sections \ref{sec:formulaT}, \ref{sec:proofmain}
we show that $T$ is invertible, and the
map $X \mapsto T X T^{-1}$ is the duality $A \leftrightarrow A^*$.
In Section \ref{sec:decomp} we use $A, A^*$ to define 4 flags and 12 decompositions.
In Section \ref{sec:actTflag} we obtain the action of  $T$ on these flags
and decompositions.
In Sections \ref{sec:24bases}, \ref{sec:rel24}
we obtain two bases from each of the 12 decompositions, and describe how these
two bases are related. 
In Section \ref{sec:Taction}
we obtain the action of $T$ on the 24 bases.
We also display  four bases among  the 24, with respect to
which the matrix representing $T$ is independent of
the bases and its entries take an attractive form.

\section{Leonard pairs}
\label{sec:LP}

We now begin our formal argument.
In this section we recall the notion of a Leonard pair.
We use the following terms.
A square matrix is said to be {\em tridiagonal} whenever
each nonzero entry lies on either the diagonal, the subdiagonal,
or the superdiagonal.
A tridiagonal matrix is said to be {\em irreducible} whenever
each entry on the subdiagonal is nonzero and each entry on
the superdiagonal is nonzero.
Let $\F$ denote a field.

\begin{definition} \cite[Definition 1.1]{T:Leonard}
\label{def:LP}    \samepage
\ifDRAFT {\rm def:LP}. \fi
Let $V$ denote a vector space over $\F$ with finite positive dimension.
By a {\em Leonard pair} on $V$ we mean an ordered pair
of $\F$-linear maps $A: V \to V$ and $A^* : V \to V$ 
that satisfy the following {\rm (i), (ii)}.
\begin{itemize}
\item[\rm (i)]
There exists a basis for $V$ with respect to which the matrix representing $A$
is irreducible tridiagonal and the matrix representing $A^*$ is diagonal.
\item[\rm (ii)]
There exists a basis for $V$ with respect to which the matrix representing $A^*$
is irreducible tridiagonal and the matrix representing $A$ is diagonal.
\end{itemize}
We say that $A,A^*$ is {\em over} $\F$.
\end{definition}

\begin{note}
According to a common notational convention,
for a matrix $A$ its conjugate-transpose is denoted by $A^*$.
We are not using this convention.
In a Leonard pair $A,A^*$ the linear maps $A,A^*$ are arbitrary subject to (i) and (ii) above.
\end{note}

We refer the reader to \cite{T:survey} for background on Leonard pairs

\begin{note}
Assume that $A,A^*$ is a Leonard pair on $V$.
Then $A^*,A$ is a Leonard pair on $V$.
\end{note}

For the rest of this paper,
let $V$ denote a vector space over $\F$ with finite positive dimension.
Let $\text{\rm End}(V)$ denote the $\F$-algebra consisting of the
$\F$-linear maps from $V$ to $V$.
The algebra $\text{\rm End}(V)$ is called the {\em endomorphism algebra} of $V$.

\begin{lemma} \cite[Corollary 5.6]{T:survey}
\label{lem:generate}    \samepage
\ifDRAFT {\rm lem:generate}. \fi
Let $A,A^*$ denote a Leonard pair on $V$.
Then $A$, $A^*$ together generate the algebra $\text{\rm End}(V)$.
\end{lemma}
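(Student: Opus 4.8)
**Proof proposal for Lemma \ref{lem:generate} (that $A,A^*$ generate $\text{\rm End}(V)$).**

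The plan is to exploit the two irreducible tridiagonal representations in Definition \ref{def:LP}. Let $\mathcal A$ denote the subalgebra of $\text{\rm End}(V)$ generated by $A$ and $A^*$; I want to show $\mathcal A = \text{\rm End}(V)$. Fix a basis $\{v_i\}_{i=0}^d$ as in Definition \ref{def:LP}(i), so that $A^*$ is diagonal with pairwise distinct eigenvalues $\{\theta^*_i\}_{i=0}^d$ (distinctness of the eigenvalues follows from the fact that $A$ acts irreducibly tridiagonal on the eigenbasis of $A^*$: if two diagonal entries of $A^*$ coincided, the eigenbasis could be reordered so that $A$ picks up a zero on the super- or subdiagonal, contradicting irreducibility). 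Since $A^*$ has $d+1$ distinct eigenvalues, the Lagrange interpolation argument shows that each primitive idempotent $E^*_i$ (the projection onto the $\theta^*_i$-eigenspace, which is the span of $v_i$) is a polynomial in $A^*$, hence lies in $\mathcal A$. In particular the rank-one map $E^*_i$ lies in $\mathcal A$ for each $i$.

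Next I would use $A$ together with these idempotents to produce all matrix units relative to $\{v_i\}_{i=0}^d$. Write $A = (a_{ij})$ in this basis; irreducibility gives $a_{i,i-1} \neq 0$ and $a_{i-1,i} \neq 0$ for $1 \le i \le d$. The key observation is that $E^*_i A E^*_j$ is, up to the scalar $a_{ij}$, the matrix unit $\mathcal E_{ij}$ sending $v_j \mapsto v_i$ and killing the other basis vectors. For $|i-j| \le 1$ we have $a_{ij} \neq 0$ on the off-diagonal and, once we also know the diagonal entries are available, this yields $\mathcal E_{ij} \in \mathcal A$ for $|i-j|\le 1$. To reach matrix units $\mathcal E_{ij}$ with $|i-j| \ge 2$, I multiply together a chain of the nearest-neighbor ones: $\mathcal E_{ij} = \mathcal E_{i,i\pm1}\mathcal E_{i\pm1,i\pm2}\cdots \mathcal E_{j\mp1,j}$, which telescopes correctly because $\mathcal E_{pq}\mathcal E_{qr} = \mathcal E_{pr}$. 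The diagonal matrix units $\mathcal E_{ii} = E^*_i$ are already in hand. Hence every $\mathcal E_{ij}$ lies in $\mathcal A$, and since these span $\text{\rm End}(V)$ we conclude $\mathcal A = \text{\rm End}(V)$.

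The one place requiring a little care — and the main obstacle — is the very first step: extracting the off-diagonal matrix units from $E^*_i A E^*_j$ requires knowing $a_{i,i\pm1}\neq 0$, which is exactly irreducibility, but one must also confirm that the $\theta^*_i$ are genuinely distinct so that the $E^*_i$ exist as polynomials in $A^*$. Both follow from the standard structure theory of Leonard pairs (indeed Definition \ref{def:LP} and the cited \cite[Lemma 1.3]{T:Leonard} already record that each eigenspace is one-dimensional and the eigenvalues are distinct), so I would simply invoke that. An alternative, essentially equivalent route is to note that $\mathcal A$ contains a subalgebra isomorphic to a full matrix algebra acting irreducibly on $V$ — via Burnside's theorem, once one checks $V$ is an irreducible $\mathcal A$-module — but the explicit matrix-unit construction above is more self-contained and is the approach I would write up.
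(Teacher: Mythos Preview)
Your argument is correct. The paper does not supply its own proof of this lemma; it simply cites \cite[Corollary 5.6]{T:survey}. Your matrix-unit construction is the standard way to establish the result and is essentially what underlies the cited reference: once the primitive idempotents $E^*_i$ are available as polynomials in $A^*$, the products $E^*_i A E^*_{i\pm 1}$ give nonzero scalar multiples of the off-diagonal matrix units, and chaining these yields all of $\text{\rm End}(V)$.

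One small remark: your parenthetical reordering argument for the distinctness of the $\theta^*_i$ is not quite right as stated --- swapping two basis vectors with equal $A^*$-eigenvalue would destroy the tridiagonal form of $A$, but that does not by itself contradict Definition~\ref{def:LP}(i), which only asserts the \emph{existence} of one good basis. You correctly fall back on \cite[Lemma 1.3]{T:Leonard} (multiplicity-freeness), which is exactly what the paper invokes as well, so the proof stands. In a write-up I would drop the heuristic reordering sentence and cite multiplicity-freeness directly.
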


We recall the notion of an isomorphism for Leonard pairs.
Let $A,A^*$ denote a Leonard pair on $V$.
Let $V'$ denote a vector space over $\F$ with finite positive dimension,
and let $A', A^{*\prime}$ denote a Leonard pair on $V'$.
By an {\em isomorphism of Leonard pairs} from $A,A^*$ to $A', A^{*\prime}$
we mean an isomorphism of $\F$-algebras from $\text{\rm End}(V)$
to $\text{\rm End}(V')$ that sends $A \mapsto A'$ and $A^* \mapsto A^{*\prime}$.
The Leonard pairs $A,A^*$ and $A', A^{*\prime}$ are said to be {\em isomorphic}
whenever there exists an isomorphism of Leonard pairs from $A,A^*$
to $A', A^{*\prime}$.
In this case,  the isomorphism involved is unique by Lemma \ref{lem:generate}.
An isomorphism of Leonard pairs can be seen from the following point of view.
By the Skolem-Noether theorem (see \cite[Corollary 7.125]{Rot}),
a map $\sigma : \text{\rm End}(V) \to \text{\rm End}(V')$ is an $\F$-algebra
isomorphism if and only if there exists an $\F$-linear bijection $K : V \to V'$
such that $X^\sigma = K X K^{-1}$ for all $X \in \text{\rm End}(V)$.
In this case, we say that {\em $K$ gives $\sigma$}.
Assume that $K$ gives $\sigma$. 
Then an $\F$-linear map $\widetilde{K} : V \to V'$ gives $\sigma$ 
if and only if there exists $0 \not=\alpha \in \F$ such that $\widetilde{K}=\alpha K$.

\begin{definition}   \label{def:selfdual}   \samepage
\ifDRAFT {\rm def:selfdual}. \fi
A Leonard pair $A,A^*$  is said to be {\em self-dual}
whenever $A, A^*$ is isomorphic to $A^*, A$.
\end{definition}

Let $A,A^*$ denote a self-dual Leonard pair on $V$.
For an automorphism $\sigma$ of  $\text{\rm End}(V)$
the following are equivalent:
\begin{itemize}    \samepage
\item[\rm (i)]
$\sigma$ is an isomorphism of Leonard pairs from $A,A^*$ to $A^*,A$;
\item[\rm (ii)]
$\sigma$ is an isomorphism of Leonard pairs from $A^*,A$ to $A,A^*$.
\end{itemize}
There exists a unique automorphism $\sigma$ of $\text{\rm End}(V)$
that satisfies (i), (ii).

\begin{definition}    \label{def:duality}    \samepage
\ifDRAFT {\rm def:duality}. \fi
Let $A,A^*$ denote a self-dual Leonard pair on $V$.
By the {\em duality $A \leftrightarrow A^*$} we mean the
automorphism $\sigma$ of $\text{\rm End}(V)$ that satisfies (i), (ii) above.
\end{definition}

\section{Leonard systems}
\label{sec:LS}

When working with a Leonard pair, it is convenient to consider a closely related object
called a Leonard system \cite{T:Leonard}.
Before we define a Leonard system,
we recall a few concepts from linear algebra.

We denote by $I$ the identity element of $\text{\rm End}(V)$.
For $A \in \text{\rm End}(V)$ let $\gen{A}$ denote the subalgebra
of $\text{\rm End}(V)$ generated by $A$.
For an integer $d \geq 0$
let $\Mat_{d+1}(\F)$ denote the $\F$-algebra consisting of
the $d+1$ by $d+1$ matrices that have all entries in $\F$.
We index the rows and columns by $0,1,\ldots,d$.
Let $\{v_i\}_{i=0}^d$ denote a basis for $V$.
For $X \in \text{\rm End}(V)$ and $Y \in \Mat_{d+1}(\F)$,
we say that {\em $Y$ represents $X$ with respect to $\{v_i\}_{i=0}^d$} whenever
$X v_j = \sum_{i=0}^d Y_{i,j} v_i$ for $0 \leq j \leq d$.
Let $A \in \text{End}(V)$.
For $\th \in \F$ define 
$V(\th) = \{ v \in V \,|\, A v = \th v\}$.
Observe that $V(\th)$ is a subspace of $V$.
The scalar $\th$ is called an {\em eigenvalue} of $A$ whenever $V(\th) \neq 0$.
In this case, $V(\th)$ is called the {\em eigenspace} of $A$ corresponding to $\th$.
We say that
$A$ is {\em diagonalizable}  whenever $V$ is spanned by the eigenspaces of $A$.
We say that $A$ is {\em multiplicity-free}
whenever $A$ is diagonalizable, and each eigenspace of $A$ has dimension one.
Assume that $A$ is multiplicity-free, and let
$\{V_i\}_{i=0}^d$ denote an ordering of the eigenspaces of $A$.
Then $\{V_i\}_{i=0}^d$ is a decomposition of $V$.
For $0 \leq i \leq d$ let $\th_i$ denote the eigenvalue of $A$ corresponding to $V_i$.
For $0 \leq i \leq d$ define $E_i \in \text{End}(V)$ such that
$(E_i - I)V_i = 0$ and $E_i V_j = 0$ if $j \neq i$ $(0   \leq j \leq d)$.
Thus $E_i$ is the projection onto $V_i$.
Observe that
(i) $V_i = E_i V$ $(0 \leq i \leq d)$;
(ii) $E_i E_j = \delta_{i,j} E_i$ $(0 \leq i,j \leq d)$;
(iii) $I = \sum_{i=0}^d E_i$;
(iv) $A = \sum_{i=0}^d \th_i E_i$.
Also 
\begin{align}
   E_i &= \prod_{\begin{smallmatrix} 0 \leq j \leq d \\ j \neq i \end{smallmatrix}   }
           \frac{A-\th_j I}{\th_i - \th_j}    
   \qquad\qquad  (0 \leq i \leq d).                               \label{eq:Ei}
\end{align}
We call $E_i$ the {\em primitive idempotent} of $A$ for $\th_i$ $(0 \leq i \leq d)$.
Observe that $\{A^i\}_{i=0}^d$ is a basis for the $\F$-vector space $\gen{A}$,
and $\prod_{i=0}^d (A-\th_i I) =0$.
Also observe that $\{E_i\}_{i=0}^d$ is a basis for the
$\F$-vector space $\gen{A}$.

Let $A,A^*$ denote a Leonard pair on $V$.
By \cite[Lemma 1.3]{T:Leonard} each of $A$, $A^*$ is multiplicity-free.
Let $\{E_i\}_{i=0}^d$ denote an ordering of the primitive idempotents of $A$.
For $0 \leq i \leq d$ pick  $0 \neq v_i \in E_i V$. 
Then $\{v_i\}_{i=0}^d$ is a basis for $V$.
The ordering $\{E_i\}_{i=0}^d$ is said to be {\em standard}
whenever the basis $\{v_i\}_{i=0}^d$ satisfies Definition \ref{def:LP}(ii).
A standard ordering of the primitive idempotents of $A^*$ is similarly defined.

\begin{definition}    \label{def:LS}   \samepage
\ifDRAFT {\rm def:LS}. \fi
By a {\em Leonard system} on $V$  we mean a sequence
\begin{equation}
  \Phi = (A; \{E_i\}_{i=0}^d; A^*; \{E^*_i\}_{i=0}^d)     \label{eq:Phi}
\end{equation}
of elements in $\text{\rm End}(V)$
that satisfy the following (i)--(iii):
\begin{itemize}
\item[\rm (i)]
$A,A^*$ is a Leonard pair on $V$;
\item[\rm (ii)]
$\{E_i\}_{i=0}^d$ is a standard ordering of the primitive idempotents of $A$;
\item[\rm (iii)]
$\{E^*_i\}_{i=0}^d$ is a standard ordering of the primitive idempotents of $A^*$.
\end{itemize}
We say that $\Phi$ is {\em over} $\F$.
\end{definition}
Referring to Definition \ref{def:LS}, the Leonard pair $A, A^*$ from part (i)
is said to be {\it associated with $\Phi$}.

We recall the notion of an isomorphism for Leonard systems.
Consider the Leonard system \eqref{eq:Phi}.
Let $V'$ denote a vector space over $\F$ with dimension $d+1$.
For an $\F$-algebra isomorphism $\sigma : \text{\rm End}(V) \to \text{\rm End}(V')$
define 
\[
  \Phi^\sigma = (A^\sigma; \{E^\sigma_i\}_{i=0}^d; (A^*)^\sigma; \{(E^*_i)^\sigma\}_{i=0}^d).
\]
Then $\Phi^\sigma$ is a Leonard system on $V'$.
Let $\Phi'$ denote a Leonard system on $V'$.
By an {\em isomorphism of Leonard systems} from $\Phi$ to $\Phi'$
we mean an $\F$-algebra isomorphism $\sigma : \text{\rm End}(V) \to \text{\rm End}(V')$
such that $\Phi' = \Phi^\sigma$.
The Leonard systems $\Phi$ and $\Phi'$ are said to be {\em isomorphic}
whenever there exists an isomorphism of Leonard systems from $\Phi$ to $\Phi'$.
In this case, the isomorphism involved is unique.

Consider a Leonard system $\Phi = (A; \{E_i\}_{i=0}^d; A^*; \{E^*_i\}_{i=0}^d)$ over $\F$.
For $0 \leq i \leq d$ let $\th_i$ (resp.\ $\{\th^*_i\}_{i=0}^d$)
denote the eigenvalue of $A$ (resp.\ $A^*$) corresponding to $E_i$ (resp.\ $E^*_i$).
We call $\{\th_i\}_{i=0}^d$ (resp.\ $\{\th^*_i\}_{i=0}^d$)
the {\em eigenvalue sequence}  (resp.\ {\em dual eigenvalue sequence}) of $\Phi$.
Note that $\{\th_i\}_{i=0}^d$ are mutually distinct and contained in $\F$.
Similarly $\{\th^*_i\}_{i=0}^d$ are mutually distinct and contained in $\F$.

Consider a Leonard system $\Phi = (A; \{E_i\}_{i=0}^d; A^*; \{E^*_i\}_{i=0}^d)$ over $\F$.
Then each of the following is a Leonard system over $\F$:
\begin{align*}
  \Phi^* &= (A^*; \{E^*_i\}_{i=0}^d; A; \{E_i\}_{i=0}^d),
\\
  \Phi^{\downarrow} &= (A; \{E_i\}_{i=0}^d; A^*; \{E^*_{d-i}\}_{i=0}^d),
\\
  \Phi^{\Downarrow} &= (A; \{E_{d-i}\}_{i=0}^d; A^*; \{E^*_i\}_{i=0}^d).
\end{align*}
Viewing $*$, $\downarrow$, $\Downarrow$ as permutations on the set of
all Leonard systems over $\F$,
\begin{align} 
        *^2 \,=\, \downarrow^2 \,&=\, \Downarrow^2 = 1,      \label{eq:rel1}
\\
  \Downarrow * \,=\, * \downarrow, \qquad
  \downarrow * &\,=\, * \Downarrow, \qquad
  \downarrow \Downarrow \,=\, \Downarrow \downarrow.    \label{eq:rel2}
\end{align}
The group generated by the symbols $*$, $\downarrow$,  $\Downarrow$ 
subject to the relations \eqref{eq:rel1}, \eqref{eq:rel2} is the dihedral group
$D_4$.
Recall that $D_4$ is the group of symmetries of a square, and has $8$ elements.
The elements $*$, $\downarrow$, $\Downarrow$ induce an action of $D_4$ on the
set of all Leonard systems over $\F$.
Two Leonard systems over $\F$ will be called {\em relatives} whenever they are in the same
orbit of this $D_4$ action.

\begin{definition}   \label{def:fg}    \samepage
\ifDRAFT {\rm def:fg}. \fi
Let $\Phi$ denote a Leonard system, and let $g \in D_4$.
For any object $f$ attached to $\Phi$,
let $f^g$ denote the corresponding object attached to $\Phi^{g^{-1}}$.
\end{definition}

\begin{lemma}    \label{lem:associated}    \samepage
\ifDRAFT {\rm lem:associated}. \fi
Let $A,A^*$ denote a Leonard pair on $V$,
and let $\Phi$ denote an associated Leonard system.
Then the Leonard systems associated with $A,A^*$ are
$\Phi$, $\Phi^\downarrow$, $\Phi^\Downarrow$, $\Phi^{\downarrow\Downarrow}$.
\end{lemma}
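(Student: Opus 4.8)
The plan is to determine exactly which Leonard systems share the Leonard pair $A,A^*$, by tracking how the data $(A; \{E_i\}; A^*; \{E^*_i\})$ can be chosen. A Leonard system associated with $A,A^*$ is by definition a sequence $(A; \{E_i\}_{i=0}^d; A^*; \{E^*_i\}_{i=0}^d)$ in which $\{E_i\}_{i=0}^d$ is a \emph{standard} ordering of the primitive idempotents of $A$ and $\{E^*_i\}_{i=0}^d$ is a standard ordering of the primitive idempotents of $A^*$. So the first move is to recall the combinatorial fact, stated in the Introduction and attributed to \cite[Lemma 1.3]{T:Leonard} together with the discussion there, that $A$ has exactly two standard orderings of its primitive idempotents, namely $\{E_i\}_{i=0}^d$ and its reversal $\{E_{d-i}\}_{i=0}^d$, and likewise $A^*$ has exactly two, namely $\{E^*_i\}_{i=0}^d$ and $\{E^*_{d-i}\}_{i=0}^d$. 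Granting this, the number of candidate sequences is at most $2 \times 2 = 4$.

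Next I would check that each of these four combinations genuinely gives a Leonard system, i.e. that the chosen ordering of the $E_i$ really is compatible with the chosen ordering of the $E^*_i$ in the sense of Definition \ref{def:LS}. The point is that the standardness condition for $\{E_i\}_{i=0}^d$ (namely, that $A^*$ acts irreducibly tridiagonally on an eigenbasis of $A$ ordered by $\{E_i\}$) does not involve the ordering of the $E^*_i$ at all, and conversely standardness of $\{E^*_i\}_{i=0}^d$ does not constrain the ordering of the $E_i$. Hence the two choices are independent, and all four sequences
\[
  (A; \{E_i\}; A^*; \{E^*_i\}), \quad
  (A; \{E_i\}; A^*; \{E^*_{d-i}\}), \quad
  (A; \{E_{d-i}\}; A^*; \{E^*_i\}), \quad
  (A; \{E_{d-i}\}; A^*; \{E^*_{d-i}\})
\]
are Leonard systems. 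Comparing with the definitions of $\Phi^\downarrow$, $\Phi^\Downarrow$, $\Phi^{\downarrow\Downarrow}$ given in Section \ref{sec:LS}, these four sequences are exactly $\Phi$, $\Phi^\downarrow$, $\Phi^\Downarrow$, $\Phi^{\downarrow\Downarrow}$ (using $\downarrow^2 = \Downarrow^2 = 1$ and $\downarrow\Downarrow = \Downarrow\downarrow$ from \eqref{eq:rel1}, \eqref{eq:rel2} to see that $\downarrow\Downarrow$ reverses both orderings simultaneously).

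Finally I would argue completeness: any Leonard system $\Psi$ associated with $A,A^*$ has the form $(A; \{E'_i\}; A^*; \{(E^*)'_i\})$ where $\{E'_i\}$ is a standard ordering of the primitive idempotents of $A$ — so by the two-orderings fact it is $\{E_i\}$ or $\{E_{d-i}\}$ — and similarly for $\{(E^*)'_i\}$; hence $\Psi$ is one of the four listed systems. The main obstacle, such as it is, is pinning down and citing cleanly the statement that a multiplicity-free diagonalizable component of a Leonard pair admits exactly two standard orderings (one the reversal of the other, with no others); this is essentially \cite[Lemma 1.3]{T:Leonard} combined with the remarks quoted in the Introduction, and once it is in hand the rest is bookkeeping against the $D_4$ notation. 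I would also remark that $\Phi^*$ is associated with the \emph{opposite} Leonard pair $A^*,A$, not with $A,A^*$, which is why the orbit listed here is the four-element $\langle \downarrow, \Downarrow\rangle$-orbit rather than the full eight-element $D_4$-orbit.
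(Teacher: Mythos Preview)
Your proposal is correct and matches the paper's approach: the paper's proof is the single line ``By the comments above Definition \ref{def:LS},'' and you have simply written out what those comments (together with the two-standard-orderings fact quoted in the Introduction) imply. Your extra remarks about the independence of the two choices and about $\Phi^*$ belonging to the opposite pair are accurate elaborations but not needed for the argument.
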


\begin{proof}
By the comments above Definition \ref{def:LS}.
\end{proof}

\begin{definition}     \label{def:selfdualsystem}    \samepage
\ifDRAFT {\rm def:selfdualsystem}. \fi
A Leonard system $\Phi$ is said to be {\em self-dual}
whenever $\Phi$ is isomorphic to $\Phi^*$.
\end{definition}

Let $\Phi$ denote a self-dual Leonard system on $V$.
For an automorphism $\sigma$ of $\text{\rm End}(V)$
the following are equivalent:
\begin{itemize}    \samepage
\item[\rm (i)]
$\sigma$ is an isomorphism of Leonard systems from $\Phi$ to $\Phi^*$;
\item[\rm (ii)]
$\sigma$ is an isomorphism of Leonard systems from $\Phi^*$ to $\Phi$.
\end{itemize}
There exists a unique automorphism $\sigma$ of $\text{\rm End}(V)$
that satisfies (i), (ii).

\begin{definition}     \label{def:duality2}    \samepage
\ifDRAFT {\rm def:duality2}. \fi
Let $\Phi$ denote a self-dual Leonard system on $V$.
By the {\em duality $\Phi \leftrightarrow \Phi^*$} we mean
the automorphism of $\text{\rm End}(V)$ that satisfies (i), (ii) above.
\end{definition}

\section{Antiautomorphisms and bilinear forms}
\label{sec:anti}

In this section we recall a few notions from the theory of Leonard pairs.
Let $\cal A$ denote an $\F$-algebra.
By an {\em antiautomorphism} of $\cal A$ we mean
an $\F$-linear bijection $\xi : {\cal A} \to {\cal A}$
such that $(XY)^\xi = Y^\xi X^\xi$ for all $X$, $Y \in {\cal A}$.

\begin{lemma}   \cite[Theorem 6.1]{T:survey}
\label{lem:dagger}     \samepage 
\ifDRAFT {\rm lem:dagger}. \fi
Let $A,A^*$ denote a Leonard pair on $V$.
Then there exists a unique antiautomorphism $\dagger$ of $\text{\rm End}(V)$
such that $A^\dagger = A$ and $(A^*)^\dagger = A^*$.
Moreover $(X^\dagger)^\dagger=X$ for all $X \in \text{\rm End}(V)$.
\end{lemma}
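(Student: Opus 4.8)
The plan is to establish uniqueness first, then existence, and finally the involutive property $(X^\dagger)^\dagger = X$. For uniqueness, suppose $\xi,\xi'$ are antiautomorphisms of $\text{\rm End}(V)$ with $A^\xi = A^{\xi'} = A$ and $(A^*)^\xi = (A^*)^{\xi'} = A^*$. Since an antiautomorphism reverses products, $\xi$ and $\xi'$ agree on every word in $A$ and $A^*$, hence on every $\F$-linear combination of such words. By Lemma \ref{lem:generate} these combinations fill out $\text{\rm End}(V)$, so $\xi = \xi'$. Thus there is at most one antiautomorphism $\dagger$ with the asserted property.

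For existence I would argue via matrices. By Definition \ref{def:LP}(i), choose a basis of $V$ with respect to which $A$ is represented by an irreducible tridiagonal matrix $\mathcal A$ and $A^*$ by a diagonal matrix $\mathcal A^*$, and identify $\text{\rm End}(V)$ with $\Mat_{d+1}(\F)$ accordingly. The transpose map $Y \mapsto Y^t$ on $\Mat_{d+1}(\F)$ is an antiautomorphism; it fixes the diagonal matrix $\mathcal A^*$, but sends $\mathcal A$ to $\mathcal A^t$, which in general differs from $\mathcal A$. To repair this I seek an invertible diagonal matrix $D = \text{\rm diag}(\nu_0,\dots,\nu_d)$ and pass to the map $Y \mapsto D^{-1} Y^t D$, which is again an antiautomorphism of $\Mat_{d+1}(\F)$; it still fixes $\mathcal A^*$ (both $\mathcal A^*$ and $D$ are diagonal), and it fixes $\mathcal A$ precisely when $\mathcal A_{j,i}\,\nu_j = \nu_i\,\mathcal A_{i,j}$ for all $i,j$. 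This holds automatically when $|i-j|\neq 1$, and for $j = i+1$ it reads $\nu_{i+1}/\nu_i = \mathcal A_{i,i+1}/\mathcal A_{i+1,i}$. Since $\mathcal A$ is irreducible tridiagonal, the entries $\mathcal A_{i,i+1}$ and $\mathcal A_{i+1,i}$ are nonzero, so setting $\nu_0 = 1$ and running this recursion defines an invertible $D$. Transporting $Y \mapsto D^{-1} Y^t D$ back to $\text{\rm End}(V)$ along the chosen basis yields an antiautomorphism $\dagger$ of $\text{\rm End}(V)$ with $A^\dagger = A$ and $(A^*)^\dagger = A^*$.

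For the involutive property, note that $X \mapsto (X^\dagger)^\dagger$ is a composite of two antiautomorphisms of $\text{\rm End}(V)$, hence an $\F$-algebra automorphism; it fixes $A$ and $A^*$, so it fixes the subalgebra generated by $A$ and $A^*$, which by Lemma \ref{lem:generate} equals $\text{\rm End}(V)$. Therefore $(X^\dagger)^\dagger = X$ for all $X \in \text{\rm End}(V)$. (Alternatively, in the matrix model one checks directly that $D^{-1}(D^{-1}Y^t D)^t D = Y$ because $D$ is diagonal.)

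The one step where the Leonard pair hypothesis genuinely enters — and the part I would treat most carefully — is the construction of $D$: it is exactly the irreducibility of the tridiagonal representation of $A$ that makes the ratios $\mathcal A_{i,i+1}/\mathcal A_{i+1,i}$ well defined and nonzero, so that $D$ exists and is invertible. Everything else (that transpose and its diagonal conjugates are antiautomorphisms, that the construction can be pulled back along the basis, and that an algebra homomorphism fixing a generating set is the identity) is formal.
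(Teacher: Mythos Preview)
Your argument is correct. The paper does not supply its own proof of this lemma; it simply cites \cite[Theorem~6.1]{T:survey}. Your proof is in fact essentially the standard one given there: in \cite{T:survey} one also works in the basis of Definition~\ref{def:LP}(i), uses the irreducibility of the tridiagonal matrix to produce an invertible diagonal $D$ with $D^{-1}\mathcal A^t D=\mathcal A$, defines $\dagger$ as $Y\mapsto D^{-1}Y^tD$, and deduces uniqueness and the involution property from Lemma~\ref{lem:generate} exactly as you do. One small remark: you might note explicitly that the relation $\mathcal A_{j,i}\nu_j=\nu_i\mathcal A_{i,j}$ for $j=i-1$ is the same equation as for $j=i+1$ (after reindexing), so the single recursion $\nu_{i+1}/\nu_i=\mathcal A_{i,i+1}/\mathcal A_{i+1,i}$ really does suffice; otherwise a reader might worry that two separate conditions are being imposed on the $\nu_i$.
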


\begin{lemma}  \cite[Lemma 6.3]{T:survey}
\label{lem:dagger2}     \samepage 
\ifDRAFT {\rm lem:dagger2}. \fi
Let $(A; \{E_i\}_{i=0}^d; A^*; \{E^*_i\}_{i=0}^d)$ denote a Leonard system on $V$.
Then the following hold.
\begin{itemize}
\item[\rm (i)]
We have $X^\dagger = X$ for all $X \in \gen{A}$.
In particular, $E_i^\dagger = E_i$ for $0 \leq i \leq d$.
\item[\rm (ii)]
We have $X^\dagger = X$ for all $X \in \gen{A^*}$.
In particular, $(E^*_i)^\dagger = E^*_i$ for $0 \leq i \leq d$.
\end{itemize}
\end{lemma}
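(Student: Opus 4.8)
The plan is to reduce both parts to the statement that the antiautomorphism $\dagger$ of Lemma \ref{lem:dagger} acts as the identity on a spanning set of $\gen{A}$ (resp.\ $\gen{A^*}$), after which $\F$-linearity finishes the job. Recall from Section \ref{sec:LS} that $\gen{A}$ is spanned over $\F$ by the powers $A^i$ ($i \geq 0$); so for part (i) it suffices to prove $(A^i)^\dagger = A^i$ for all integers $i \geq 0$.

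First I would establish this by induction on $i$. For $i = 0$ we have $I^\dagger = I$, since any antiautomorphism fixes the identity (from $I^\dagger X^\dagger = (X I)^\dagger = X^\dagger$ together with surjectivity of $\dagger$), and for $i = 1$ we have $A^\dagger = A$ by Lemma \ref{lem:dagger}. For the inductive step, use the antiautomorphism property to write $(A^{i+1})^\dagger = (A^i A)^\dagger = A^\dagger (A^i)^\dagger = A \cdot A^i = A^{i+1}$. Since $\dagger$ is $\F$-linear and every element of $\gen{A}$ is an $\F$-linear combination of powers of $A$, it follows that $X^\dagger = X$ for all $X \in \gen{A}$. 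In particular $E_i^\dagger = E_i$ for $0 \leq i \leq d$, because each $E_i$ lies in $\gen{A}$ (e.g.\ by \eqref{eq:Ei}). Part (ii) then follows by the identical argument with $A^*$ in place of $A$, using $(A^*)^\dagger = A^*$ from Lemma \ref{lem:dagger} and the fact that each $E^*_i$ lies in $\gen{A^*}$.

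There is no serious obstacle here; the only point that deserves a moment's attention is that, although $\dagger$ reverses the order of a product in general, it fixes every power of a single generator precisely because there the order reversal is vacuous. Everything else is immediate from $\F$-linearity and the descriptions of $\gen{A}$ and $\gen{A^*}$ recalled in Section \ref{sec:LS}.
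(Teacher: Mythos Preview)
Your argument is correct. Note, however, that the paper does not actually supply a proof of this lemma: it is quoted verbatim from \cite[Lemma~6.3]{T:survey} and stated without proof. Your induction on powers of $A$ (resp.\ $A^*$), combined with $\F$-linearity of $\dagger$ and the fact that $\{A^i\}_{i=0}^d$ spans $\gen{A}$, is exactly the standard way to see this, and there is nothing to compare it against in the present paper.
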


By a {\em bilinear form on $V$} we mean a map
$\b{\; ,\,} : V \times V \to \F$ that satisfies the following
four conditions for all $u,v,w \in V$ and for all $\alpha \in \F$:
(i) $\b{u+v,w}=\b{u,w}+\b{v,w}$;
(ii) $\b{\alpha u,v}=\alpha \b{u,v}$;
(iii) $\b{u,v+w}=\b{u,v}+\b{u,w}$;
(iv) $\b{u,\alpha v}=\alpha \b{u,v}$.
Let $\b{\: ,\,}$ denote a bilinear form on $V$.
This form is said to be {\em symmetric} whenever 
$\b{u,v}=\b{v,u}$ for all $u,v \in V$.
Let $\b{\;,\,}$ denote a bilinear form on $V$.
Then the following are equivalent:
(i) there exists a nonzero $u \in V$ such that $\b{u,v}=0$ for all $v\in V$;
(ii) there exists a nonzero $v\in V$ such that $\b{u,v}=0$ for all $u\in V$.
The form $\b{\: ,\,}$ is said to be {\em degenerate} whenever (i), (ii) hold
and {\em nondegenerate} otherwise.
Let $\xi$ denote an antiautomorphism of $\text{\rm End}(V)$.
Then there exists a nonzero bilinear form $\b{\;,\,}$ on $V$
such that $\b{Xu,v}=\b{u,X^\xi v}$ for all $u,v \in V$ and 
for all $X \in \text{\rm End}(V)$.
The form is unique up to multiplication by a nonzero scalar in $\F$.
The form is nondegenerate.
We refer to this form as the {\em bilinear form on $V$ associated with $\xi$}.
This form is not symmetric in general.

Let $A, A^*$ denote a Leonard pair on $V$. 
Recall the antiautomorphism $\dagger$ of $\text{\rm End}(V)$ from Lemmma \ref{lem:dagger}.
Let $\b{\; , \,}$ denote the bilinear form on $V$ associated with $\dagger$.
By \cite[Corollary 15.4]{T:qRacah} the bilinear form  $\b{\; , \,}$ is symmetric.
By construction, for $X \in \text{\rm End}(V)$ we have
\[
  \b{Xu,v}=\b{u,X^\dagger v}  \qquad\qquad\qquad (u,v \in V).
\]
In particular,
\[
  \b{Au,v} = \b{u,Av},  \qquad\quad
  \b{A^* u,v} = \b{u, A^* v}  \qquad\qquad\qquad (u,v \in V).
\]

\section{The split decomposition and the parameter array}
\label{sec:parray}

Let $\Phi = (A; \{E_i\}_{i=0}^d; A^*; \{E^*_i\}_{i=0}^d)$ denote a Leonard system on $V$.
In this section we recall the $\Phi$-split decomposition of $V$
and the parameter array of $\Phi$.
Recall the eigenvalue sequence $\{\th_i\}_{i=0}^d$ and the dual eigenvalue sequence
$\{\th^*_i\}_{i=0}^d$ of $\Phi$.
Let $x$ denote an indeterminate, and
let $\F[x]$ denote the $\F$-algebra consisting of the polynomials in $x$
that have all coefficients in $\F$.

\begin{definition}    \cite[Definition 4.3]{T:Leonard}
\label{def:tau}    \samepage
\ifDRAFT {\rm def:tau}. \fi
For $0 \leq i \leq d$ we define some polynomials in $\F[x]$:
\begin{align*}
 \tau_i &= (x-\th_0)(x - \th_1) \cdots (x-\th_{i-1}),               \\
 \eta_i &= (x - \th_d)(x - \th_{d-1}) \cdots (x-\th_{d-i+1}),     \\
 \tau^*_i &= (x - \th^*_0)(x - \th^*_1) \cdots (x - \th^*_{i-1}),   \\
 \eta^*_i &= (x - \th^*_d)(x - \th^*_{d-1}) \cdots (x - \th^*_{d-i+1}).
\end{align*}
\end{definition}

For $0 \leq i \leq d$ define
\begin{equation}
  U_i = (E^*_0 V + \cdots + E^*_i V) \cap (E_i V + \cdots + E_d V).     \label{eq:Ui}
\end{equation}
By \cite[Theorem 20.7]{T:survey} the sequence $\{U_i\}_{i=0}^d$ is a decomposition of $V$.
This decomposition is called the {\em $\Phi$-split decomposition} of $V$.
By \cite[Lemma 20.9]{T:survey},
\begin{align*}
   (A - \th_i I) U_i &= U_{i+1} \qquad (0 \leq i \leq d-1), & (A-\th_d I) U_d &= 0, 
\\
  (A^* - \th^*_i I) U_i &= U_{i-1} \qquad (1 \leq i \leq d), & (A^*-\th^*_0 I) U_0 &=0. 
\end{align*}
For $0 \leq i \leq d$,
\begin{align*}
  \tau_i(A) U_0 &= U_i,  &
  \eta^*_i(A^*) U_d &= U_{d-i}.   
\end{align*}
Pick a nonzero $v \in E^*_0V$.
For $0 \leq i \leq d$ define 
$u_i = \tau_i(A) v$.
Then $0 \neq u_i \in U_i$ for $0 \leq i \leq d$.
Moreover, the vectors $\{u_i\}_{i=0}^d$ form a basis for $V$.
We call $\{u_i\}_{i=0}^d$ a {\em $\Phi$-split basis} for $V$.
With respect to a $\Phi$-split basis, the matrices representing $A$ and $A^*$ are
\[
 A :
  \begin{pmatrix}
    \th_0 &      &    & & & \text{\bf 0}                  \\
    1 & \th_1    \\
         & 1  & \th_2  \\
         &      &  \cdot & \cdot \\
         &       &         & \cdot & \cdot   \\
     \text{\bf 0}   &        &          &       & 1 & \th_d   \\
  \end{pmatrix},
\qquad
 A^* :
  \begin{pmatrix}
    \th^*_0 & \vphi_1     &    & & & \text{\bf 0}                  \\
        & \th^*_1 &\vphi_2    \\
         &   & \th^*_2 & \cdot \\
         &      &  & \cdot & \cdot\\
         &       &         &  & \cdot & \vphi_d   \\
     \text{\bf 0}   &        &          &       &  & \th^*_d  \\
  \end{pmatrix},
\]
where $\{\vphi_i\}_{i=1}^d$ are nonzero scalars in $\F$.
The sequence $\{\vphi_i\}_{i=1}^d$ is uniquely determined by $\Phi$,
and called the {\em first split sequence} of $\Phi$.
Let $\{\phi_i\}_{i=1}^d$ denote the first split sequence of $\Phi^\Downarrow$.
We call $\{\phi_i\}_{i=1}^d$ the {\em second split sequence} of $\Phi$. 
By the {\em parameter array} of $\Phi$ we mean the sequence
$(\{\th_i\}_{i=0}^d; \{\th^*_i\}_{i=0}^d; \{\vphi_i\}_{i=1}^d; \{\phi_i\}_{i=1}^d)$.
By \cite[Theorem 1.9]{T:Leonard} the Leonard system $\Phi$ is determined
up to isomorphism by its parameter array.

For the rest of this section let 
\[
   (\{\th_i\}_{i=0}^d; \{\th^*_i\}_{i=0}^d; \{\vphi_i\}_{i=1}^d; \{\phi_i\}_{i=1}^d)
\]
denote the parameter array of $\Phi$.

\begin{lemma}  \cite[Theorem 1.11]{T:Leonard}
\label{lem:Phis}    \samepage
\ifDRAFT {\rm lem:Phis}. \fi
The following {\rm (i)--(iii)} hold.
\begin{itemize}
\item[\rm (i)]
The parameter array of $\Phi^*$ is 
\[
   (\{\th^*_i\}_{i=0}^d; \{\th_i\}_{i=0}^d; \{\vphi_i\}_{i=1}^d; \{\phi_{d-i+1}\}_{i=1}^d).
\]
\item[\rm (ii)]
The parameter array of $\Phi^\downarrow$ is
\[
 (\{\th_i\}_{i=0}^d; \{\th^*_{d-i}\}_{i=0}^d; \{\phi_{d-i+1}\}_{i=1}^d; \{\vphi_{d-i+1}\}_{i=1}^d).
\]
\item[\rm (iii)]
The parameter array of $\Phi^\Downarrow$ is
\[
 (\{\th_{d-i}\}_{i=0}^d; \{\th^*_i\}_{i=0}^d; \{\phi_i\}_{i=1}^d; \{\vphi_i\}_{i=1}^d).
\]
\end{itemize}
\end{lemma}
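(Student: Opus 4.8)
The plan is to view this as a determination of how the $D_4$ action on Leonard systems transforms the parameter array, establishing a minimal amount by hand and deriving the rest from the relations \eqref{eq:rel1}, \eqref{eq:rel2}. I would first dispose of the eigenvalue and dual eigenvalue sequences of $\Phi^*$, $\Phi^\downarrow$, $\Phi^\Downarrow$: these read off at once from the definitions of these relatives together with the definition of the (dual) eigenvalue sequence --- e.g.\ in $\Phi^\downarrow$ the $i$-th dual primitive idempotent is $E^*_{d-i}$, whose $A^*$-eigenvalue is $\th^*_{d-i}$. Part (iii) is then essentially formal: by definition the first split sequence of $\Phi^\Downarrow$ is the second split sequence $\{\phi_i\}_{i=1}^d$ of $\Phi$, while the second split sequence of $\Phi^\Downarrow$ is the first split sequence of $(\Phi^\Downarrow)^\Downarrow = \Phi$, hence $\{\vphi_i\}_{i=1}^d$ (using $\Downarrow^2 = 1$).

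The real input is two facts obtained from the symmetric nondegenerate bilinear form $\b{\;,\,}$ on $V$ associated with the antiautomorphism $\dagger$ of Lemma \ref{lem:dagger}, for which $A$ and $A^*$ are self-adjoint. First, distinct eigenspaces of $A$, and distinct eigenspaces of $A^*$, are mutually orthogonal for $\b{\;,\,}$ --- immediate from $E_i^\dagger = E_i$, $(E^*_i)^\dagger = E^*_i$ (Lemma \ref{lem:dagger2}) and $E_iE_j = \delta_{i,j}E_i$. Writing the $\Phi$-split, $\Phi^*$-split, and $\Phi^{\downarrow\Downarrow}$-split decompositions via \eqref{eq:Ui} and tracking which sums of eigenspaces of $A$ and of $A^*$ each component lies in, this orthogonality forces the $i$-th component $U_i$ of the $\Phi$-split decomposition to be orthogonal to the $j$-th component of the $\Phi^*$-split decomposition unless $i=j$, and to the $j$-th component of the $\Phi^{\downarrow\Downarrow}$-split decomposition unless $i+j=d$. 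Consequently, if $\{u_i\}_{i=0}^d$ is a $\Phi$-split basis and $\{u^\vee_i\}_{i=0}^d$ is its dual basis for $\b{\;,\,}$, then $u^\vee_i$ lies in the $i$-th component of the $\Phi^*$-split decomposition and in the $(d-i)$-th component of the $\Phi^{\downarrow\Downarrow}$-split decomposition. Since $A$ and $A^*$ are self-adjoint, the matrices representing them in the dual basis are the transposes of the two bidiagonal matrices representing them in the split basis (displayed in Section \ref{sec:parray}); transposing, and then rescaling $\{u^\vee_i\}_{i=0}^d$ into a $\Phi^*$-split basis (resp.\ reading $\{u^\vee_{d-i}\}_{i=0}^d$ as a $\Phi^{\downarrow\Downarrow}$-split basis, for which no rescaling is needed), one reads off (a) the first split sequence of $\Phi^*$ equals the first split sequence $\{\vphi_i\}_{i=1}^d$ of $\Phi$, and (b) the first split sequence of $\Phi^{\downarrow\Downarrow}$ is the reversal $\{\vphi_{d-i+1}\}_{i=1}^d$. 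Both (a) and (b) hold verbatim with $\Phi$ replaced by any Leonard system.

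It then remains to assemble parts (i) and (ii), using the identities $(\Phi^*)^\Downarrow = (\Phi^\downarrow)^*$, $\Phi^\downarrow = (\Phi^\Downarrow)^{\downarrow\Downarrow}$ and $(\Phi^\downarrow)^\Downarrow = \Phi^{\downarrow\Downarrow}$, each checked straight from the definitions of $*,\downarrow,\Downarrow$. The first split sequence of $\Phi^*$ is $\{\vphi_i\}$ by (a); the first split sequence of $\Phi^\downarrow = (\Phi^\Downarrow)^{\downarrow\Downarrow}$ is, by (b) applied to $\Phi^\Downarrow$, the reversal of the first split sequence $\{\phi_i\}$ of $\Phi^\Downarrow$, i.e.\ $\{\phi_{d-i+1}\}$; the second split sequence of $\Phi^\downarrow$ is the first split sequence of $(\Phi^\downarrow)^\Downarrow = \Phi^{\downarrow\Downarrow}$, i.e.\ $\{\vphi_{d-i+1}\}$ by (b); and the second split sequence of $\Phi^*$ is the first split sequence of $(\Phi^*)^\Downarrow = (\Phi^\downarrow)^*$, which by (a) equals the first split sequence of $\Phi^\downarrow$, namely $\{\phi_{d-i+1}\}$. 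Combined with the eigenvalue sequences found at the outset, this yields the three displayed parameter arrays.

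I expect fact (b) to be the main obstacle: the statement that the $\dagger$-dual of a $\Phi$-split basis, taken in reverse order, is a $\Phi^{\downarrow\Downarrow}$-split basis hinges on the orthogonality relation $i+j=d$, and proving that relation requires some care in pinning down exactly which unions of eigenspaces of $A$ and of $A^*$ contain the components of the $\Phi$-split and $\Phi^{\downarrow\Downarrow}$-split decompositions; everything else is either formal or a brief matrix transposition. (Alternatively one could bypass the bilinear form and verify the displayed parameter arrays directly from the known closed-form expressions for the split sequences in terms of the eigenvalue sequences, using the three-term recurrence that the eigenvalues of a Leonard system satisfy; but the route above relies only on what has already been set up.)
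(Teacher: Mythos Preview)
The paper does not prove this lemma at all: it is quoted verbatim from \cite[Theorem~1.11]{T:Leonard} and used as a black box. So there is no ``paper's own proof'' to compare against beyond the citation.

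Your argument is correct and self-contained within the framework already set up in Sections~\ref{sec:anti}--\ref{sec:parray}. The eigenvalue/dual eigenvalue sequences of the three relatives and all of part~(iii) are indeed purely formal, as you say. The substantive steps (a) and (b) both go through: the orthogonality computations you describe (using Lemma~\ref{lem:dagger2} and the block structure of the split components from~\eqref{eq:Ui}) do force the $\b{\,,\,}$-dual of a $\Phi$-split basis to lie componentwise in the $\Phi^*$-split decomposition and, reversed, in the $\Phi^{\downarrow\Downarrow}$-split decomposition; transposing the two bidiagonal matrices and rescaling (for (a)) or just reversing (for (b)) then reads off the claimed split sequences. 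Your $D_4$ bookkeeping at the end is accurate --- in particular $(\Phi^*)^\Downarrow = (\Phi^\downarrow)^*$ and $\Phi^\downarrow = (\Phi^\Downarrow)^{\downarrow\Downarrow}$ both follow from~\eqref{eq:rel1},~\eqref{eq:rel2}.

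One small remark on your anticipated obstacle: fact~(b) is actually no harder than (a). Once you have the transposed matrices in the dual basis $\{u^\vee_i\}$, conjugating by the order-reversal permutation takes the upper-bidiagonal $A$ (diagonal $\th_i$, superdiagonal $1$) to lower-bidiagonal (diagonal $\th_{d-i}$, subdiagonal $1$) \emph{on the nose}, so $\{u^\vee_{d-i}\}_{i=0}^d$ is already a $\Phi^{\downarrow\Downarrow}$-split basis with no rescaling; the superdiagonal of $A^*$ then literally reads $\vphi_{d-i+1}$. The orthogonality check $i+j=d$ is a two-line index computation exactly parallel to the $i=j$ case.
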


We mention some results for later use.

\begin{lemma}    \label{lem:E0Ed}    \samepage
\ifDRAFT {\rm lem:E0Ed}. \fi
We have
\begin{align}                                                         \label{eq:E0Ed}
  E_0 &= \frac{\eta_d(A)}{\eta_d(\th_0)}, &
  E_d &= \frac{\tau_d(A)}{\tau_d(\th_d)}, &
  E^*_0 &= \frac{\eta^*_d(A^*)}{\eta^*_d(\th^*_0)}, &
  E^*_d &= \frac{\tau^*_d(A^*)}{\tau^*_d (\th^*_d)}.
\end{align}
\end{lemma}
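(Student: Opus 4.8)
The plan is to recall the standard fact that for a multiplicity-free map $A$ with eigenvalues $\{\th_i\}_{i=0}^d$ and primitive idempotents $\{E_i\}_{i=0}^d$, the idempotent $E_i$ is the unique element of $\gen{A}$ acting as the identity on $E_iV$ and as zero on $E_jV$ for $j\neq i$. From \eqref{eq:Ei} we have $E_i = \prod_{j\neq i}(A-\th_j I)/(\th_i-\th_j)$; specializing to $i=0$ and $i=d$ and using Definition \ref{def:tau} for the polynomials $\eta_d$ and $\tau_d$ gives the first two formulas, and the same argument applied to $A^*$ with Definition \ref{def:tau} for $\eta^*_d$, $\tau^*_d$ gives the last two.

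In more detail, the first step is to observe that $\eta_d = (x-\th_d)(x-\th_{d-1})\cdots(x-\th_1)$, which is precisely $\prod_{j=1}^d (x-\th_j) = \prod_{j\neq 0}(x-\th_j)$ since all the $\th_j$ are distinct. Hence by \eqref{eq:Ei} with $i=0$,
\[
  E_0 = \prod_{j\neq 0}\frac{A-\th_j I}{\th_0-\th_j} = \frac{\prod_{j\neq 0}(A-\th_j I)}{\prod_{j\neq 0}(\th_0-\th_j)} = \frac{\eta_d(A)}{\eta_d(\th_0)},
\]
where the denominator $\eta_d(\th_0)=\prod_{j=1}^d(\th_0-\th_j)$ is nonzero because the $\th_j$ are mutually distinct. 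The second step is the entirely analogous computation for $i=d$: here $\tau_d = (x-\th_0)(x-\th_1)\cdots(x-\th_{d-1}) = \prod_{j\neq d}(x-\th_j)$, so \eqref{eq:Ei} with $i=d$ yields $E_d = \tau_d(A)/\tau_d(\th_d)$ with $\tau_d(\th_d)\neq 0$.

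The third step is to apply exactly the same reasoning with $A$ replaced by $A^*$ and $\{\th_i\}_{i=0}^d$ replaced by the dual eigenvalue sequence $\{\th^*_i\}_{i=0}^d$, which is also multiplicity-free with mutually distinct entries. Since $A^*$ is multiplicity-free (by \cite[Lemma 1.3]{T:Leonard}, as noted in Section \ref{sec:LS}) and $\{E^*_i\}_{i=0}^d$ are its primitive idempotents, the analog of \eqref{eq:Ei} holds for $A^*$; combining it with $\eta^*_d = \prod_{j\neq 0}(x-\th^*_j)$ and $\tau^*_d = \prod_{j\neq d}(x-\th^*_j)$ from Definition \ref{def:tau} gives the remaining two formulas $E^*_0 = \eta^*_d(A^*)/\eta^*_d(\th^*_0)$ and $E^*_d = \tau^*_d(A^*)/\tau^*_d(\th^*_d)$.

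There is no real obstacle here: this is a routine bookkeeping lemma whose only content is matching the definitions of $\tau_d,\eta_d,\tau^*_d,\eta^*_d$ against the Lagrange-interpolation formula \eqref{eq:Ei}. The only point requiring (minor) care is to be sure the denominators are nonzero, which is immediate from the mutual distinctness of the eigenvalues and dual eigenvalues recorded at the end of Section \ref{sec:LS}. Accordingly I would simply write out the two displayed computations above for $E_0$ and $E_d$, then remark that replacing $(A,\{\th_i\})$ by $(A^*,\{\th^*_i\})$ gives the formulas for $E^*_0$ and $E^*_d$, and conclude.
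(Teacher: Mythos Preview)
Your proof is correct and is exactly the paper's approach: the paper's own proof reads simply ``By \eqref{eq:Ei} and Definition \ref{def:tau},'' and you have merely spelled out those two ingredients in full detail.
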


\begin{proof}
By \eqref{eq:Ei} and Definition \ref{def:tau}.
\end{proof}

\begin{lemma}    \label{lem:MMs}    \samepage
\ifDRAFT {\rm note:MMs}. \fi
For the $\F$-vector spaces $\gen{A}$ and $\gen{A^*}$,
we give three bases:
\[
 \begin{array}{c|ccc}
  \text{\rm vector space $U$} & \multicolumn{3}{c}{\text{\rm three bases for $U$}} 
\\ \hline
  \gen{A} & \{E_i\}_{i=0}^d & \{\tau_i(A)\}_{i=0}^d &  \{\eta_i(A)\}_{i=0}^d   \rule{0mm}{2.5ex}
\\
  \gen{A^*} & \{E^*_i\}_{i=0}^d & \{\tau^*_i(A^*)\}_{i=0}^d &  \{\eta^*_i(A^*)\}_{i=0}^d  \rule{0mm}{2.4ex}
 \end{array}
\]
\end{lemma}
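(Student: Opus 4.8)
\textbf{Proof plan for Lemma \ref{lem:MMs}.}

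The plan is to observe first that all three candidate families each consist of $d+1$ elements of the $(d+1)$-dimensional space in question, so it suffices to prove linear independence (or, equivalently, that each family spans). For $\gen{A}$ the space is $(d+1)$-dimensional because $A$ is multiplicity-free with $d+1$ distinct eigenvalues $\{\th_i\}_{i=0}^d$, as recorded in Section \ref{sec:LS} (where it is noted that $\{A^i\}_{i=0}^d$ and $\{E_i\}_{i=0}^d$ are each bases for $\gen{A}$); likewise for $\gen{A^*}$ with the $\th^*_i$. So the family $\{E_i\}_{i=0}^d$ is already known to be a basis, and by the $*$-symmetric statement so is $\{E^*_i\}_{i=0}^d$.

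For the remaining four families the key point is degree. By Definition \ref{def:tau}, $\tau_i$ is a polynomial in $x$ of degree exactly $i$ (a product of $i$ monic linear factors $x-\th_j$, which is nonzero since the $\th_j$ lie in $\F$), so $\tau_i(A)$ lies in the span of $\{A^j\}_{j=0}^i$ and has a nonzero $A^i$-component. Hence the transition matrix expressing $\{\tau_i(A)\}_{i=0}^d$ in terms of the basis $\{A^i\}_{i=0}^d$ is lower triangular with nonzero diagonal entries, so it is invertible and $\{\tau_i(A)\}_{i=0}^d$ is a basis for $\gen{A}$. The same argument applies verbatim to $\{\eta_i(A)\}_{i=0}^d$, since $\eta_i$ also has degree exactly $i$ by Definition \ref{def:tau}. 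Applying this reasoning with $A^*$, $\th^*_i$ in place of $A$, $\th_i$ (equivalently, invoking the construction applied to the Leonard pair $A^*,A$) handles $\{\tau^*_i(A^*)\}_{i=0}^d$ and $\{\eta^*_i(A^*)\}_{i=0}^d$, completing the proof.

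There is essentially no obstacle here; the only thing to be careful about is making explicit that $\deg \tau_i = \deg \eta_i = i$, i.e.\ that the leading coefficient is $1 \neq 0$, which is immediate from the definition but is the single fact the whole argument rests on. One could alternatively argue directly that $\{\eta_i(A)\}_{i=0}^d$ is, up to triangular change of basis, the same as $\{\tau_i(A)\}_{i=0}^d$ after reversing the ordering of the eigenvalues, but the degree argument is cleaner and uniform across all six families.
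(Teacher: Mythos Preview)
Your proof is correct and follows essentially the same approach as the paper: the paper's proof simply cites ``the comments below \eqref{eq:Ei} along with Definition \ref{def:tau},'' which amounts to precisely your observation that $\{A^i\}_{i=0}^d$ and $\{E_i\}_{i=0}^d$ are already known bases for $\gen{A}$, together with the fact from Definition \ref{def:tau} that each $\tau_i$ and $\eta_i$ is monic of degree $i$, so the change-of-basis matrix is triangular with unit diagonal. You have just made the implicit triangularity argument explicit.
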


\begin{proof}
By the comments below \eqref{eq:Ei} along with Definition \ref{def:tau}.
\end{proof}

\section{Some traces}
\label{sec:trace}

Let 
$\Phi = (A; \{E_i\}_{i=0}^d; A^*; \{E^*_i\}_{i=0}^d)$
denote a Leonard system on $V$ with parameter array
\[
   (\{\th_i\}_{i=0}^d; \{\th^*_i\}_{i=0}^d; \{\vphi_i\}_{i=1}^d; \{\phi_i\}_{i=1}^d).
\]
Later in the paper we will need some facts about $\Phi$ that involve the trace function tr.
Consider the scalars
\begin{equation}
 \tr(E_rE^*_0),\qquad \tr(E_rE^*_d), \qquad \tr(E^*_rE_0), \qquad \tr(E^*_rE_d)   \label{eq:traces}
\end{equation}
for $0 \leq r \leq d$.
By \cite[Theorem 17.12]{T:qRacah} we find that for $0 \leq r \le d$,
\begin{align}     
 \tr(E_r E^*_0) &=
  \frac{\vphi_1 \vphi_2 \cdots \vphi_r \, \phi_1 \phi_2 \cdots \phi_{d-r}}
       {\eta^*_d(\th^*_0) \tau_r(\th_r) \eta_{d-r}(\th_r)},
                      \label{eq:trErEs0}  \\
 \tr(E_r E^*_d) &=
  \frac{\phi_d \phi_{d-1} \cdots \phi_{d-r+1}\,
             \vphi_d \vphi_{d-1} \cdots \varphi_{r+1}}
       {\tau^*_d(\th^*_d) \tau_r(\th_r) \eta_{d-r}(\th_r)},
                      \label{eq:trErEsd}  \\
 \tr(E^*_r E_0) &=
  \frac{\vphi_1 \vphi_2 \cdots \vphi_r\,
            \phi_d \phi_{d-1} \cdots \phi_{r+1}}
       {\eta_d(\th_0) \tau^*_r(\th^*_r) \eta^*_{d-r}(\th^*_r)},
                      \label{eq:trEsrE0}  \\
 \tr(E^*_r E_d) &=
  \frac{\phi_1 \phi_2 \cdots \phi_r\,
            \vphi_d \vphi_{d-1} \cdots \vphi_{r+1}}
       {\tau_d(\th_d) \tau^*_r(\th^*_r) \eta^*_{d-r}(\th^*_r)}.
                      \label{eq:trEsrEd} 
\end{align}
Note that the scalars in \eqref{eq:trErEs0}--\eqref{eq:trEsrEd} are nonzero.
In particular $\tr(E_0E^*_0)$ is nonzero.
Define $\nu \in \F$ by
\begin{equation}
          \nu = \text{\rm tr} (E_0 E^*_0)^{-1}.                   \label{eq:defnu}
\end{equation}
By \cite[Lemma 9.4]{T:survey},
\begin{equation}
   \nu E_0 E^*_0 E_0 = E_0,  \qquad\qquad\qquad
   \nu E^*_0 E_0 E^*_0 = E^*_0.                         \label{eq:E0Es0E0}
\end{equation}
By \eqref{eq:trErEs0}--\eqref{eq:defnu},
\begin{align}
\nu &= \frac{\eta_d(\th_0) \eta^*_d(\th^*_0)}
                  {\phi_1 \cdots \phi_d},     
&
\nu^\downarrow &=
   \frac{\eta_d(\th_0) \tau^*_d(\th^*_d)}
          {\vphi_1 \cdots \vphi_d},                                    \label{eq:nud}
\\
   \nu^\Downarrow &= 
            \frac{\tau_d(\th_d) \eta^*_d(\th^*_0)}
                   {\vphi_1 \cdots \vphi_d},
&
   \nu^{\downarrow\Downarrow} &= 
            \frac{\tau_d(\th_d) \tau^*_d(\th^*_d)}
                   {\phi_1 \cdots \phi_d}.                                 \label{eq:nuD}
\end{align}

We mention a result for later use.
Let $\{U_i\}_{i=0}^d$ denote the $\Phi$-split decomposition of $V$.
For $0 \leq i \leq d$ define $F_i \in \text{\rm End}(V)$
such that $(F_i - I)U_i=0$ and $F_i U_j=0$ if $j \neq i$ $(0 \leq j \leq d)$.
Thus $F_i$ is the projection onto $U_i$.
Observe that (i) $U_i = F_i V$ $(0 \leq i \leq d)$;
(ii) $F_i F_j =\delta_{i,j} F_i$  $(0 \leq i,j\leq d)$;
(iii) $I = \sum_{i=0}^d F_i$.

\begin{lemma}   \cite[Corollary 7.4]{NT:unit}
\label{lem:Fi}    \samepage
\ifDRAFT {\rm lem:Fi}. \fi
For $0 \leq i \leq d$,
\begin{align}
  F_i &=  \frac{\nu \tau_i(A) E^*_0 E_0 \tau^*_i (A^*)}
                {\vphi_1 \cdots \vphi_i}.                          \label{eq:Fi}
\end{align}
\end{lemma}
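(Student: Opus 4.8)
The plan is to verify the formula \eqref{eq:Fi} by checking that the right-hand side, call it $G_i$, satisfies the two defining conditions of $F_i$: namely $(G_i - I)U_i = 0$ and $G_i U_j = 0$ for $j \neq i$. Since $\{U_j\}_{j=0}^d$ is a decomposition of $V$, these conditions determine $F_i$ uniquely, so it suffices to establish them for $G_i$. First I would recall from Section \ref{sec:parray} the relevant structure: $U_0 = E^*_0 V$, the map $\tau_i(A)$ restricts to a bijection $U_0 \to U_i$, and $\eta^*_i(A^*)$ sends $U_d \to U_{d-i}$. I would also recall $\tau_i(A) U_j \subseteq U_{i+j}$ (more precisely $(A - \th_j I) U_j = U_{j+1}$) and that $(A^* - \th^*_j I) U_j = U_{j-1}$, so $\tau^*_i(A^*) U_j = 0$ whenever $j < i$ and $\tau^*_i(A^*) U_j \subseteq U_{j-i}$ in general.

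The key computational engine is \eqref{eq:E0Es0E0}, which says $\nu E^*_0 E_0 E^*_0 = E^*_0$ and $\nu E_0 E^*_0 E_0 = E_0$. To handle $G_i U_j$, I would first note that $\tau^*_i(A^*)$ acts on $U_j$; writing a vector of $U_j$ and pushing $\tau^*_i(A^*)$ through, one lands in $U_{j-i}$ when $j \geq i$ and gets $0$ when $j < i$, which immediately kills the cases $j < i$. For $j \geq i$, after applying $\tau^*_i(A^*)$ we are in $U_{j-i} \subseteq E^*_0 V + \cdots + E^*_{j-i}V$; then $E_0$ is applied, and here I would use that $E_0 (E^*_0 V + \cdots + E^*_{j-i} V)$ interacts with the split decomposition — in fact $E_0 U_k = 0$ for $k > 0$ is false in general, so instead the cleaner route is: $E^*_0 E_0 \tau^*_i(A^*)$ applied to $U_j$. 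Using $\tau^*_i(A^*) U_j \subseteq U_{j-i}$ and that $U_{j-i} = \tau_{j-i}(A) U_0 = \tau_{j-i}(A) E^*_0 V$, together with the fact that $E^*_0 \tau_{j-i}(A) E^*_0$ is a scalar multiple of $E^*_0$ (this is a standard fact about Leonard systems, following from $E^*_0 \gen{A} E^*_0 \subseteq \F E^*_0$), we reduce to a scalar times $E^*_0 E_0$, then multiply by $\tau_i(A)$ on the left. The case $j = i$ should give exactly the identity on $U_i$ after the constants telescope via \eqref{eq:E0Es0E0} and the normalization $\vphi_1 \cdots \vphi_i$; the cases $j > i$ should vanish because the surviving term, after collecting, is $\tau_i(A) E^*_0 \cdot(\text{something in } U_{j-i})$ which lies in $U_i + U_{i+1} + \cdots$ but must also be checked to vanish — more likely the vanishing for $j > i$ comes from a degree/triangularity count in the $E^*$-decomposition: $\tau^*_i(A^*) U_j$ for $j > i$ is nonzero, so the real vanishing mechanism for $j>i$ is that $E^*_0 E_0 \tau^*_i(A^*) U_j \subseteq E^*_0 E_0 U_{j-i}$ and one shows $E_0 U_{j-i} \subseteq U_0 + \cdots$; precisely, $E_0 U_k \subseteq U_k + U_{k+1} + \cdots + U_d$ is wrong too — I would instead invoke that $E^*_0 E_0$ has rank one (its image is $E^*_0 V = U_0$ and its kernel contains $U_1 + \cdots + U_d$, since $\tau^*_i$ being applied first drops us out). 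This last point is the crux.

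The main obstacle I anticipate is exactly this rank-one bookkeeping: showing cleanly that $E^*_0 E_0 \tau^*_i(A^*)$ annihilates $U_j$ for $j \neq i$ and acts as the correct scalar on $U_i$. The cleanest path is probably to use Lemma \ref{lem:dagger2} and the bilinear form: the operator $F_i$ and the proposed $G_i$ can be compared by pairing against the split basis and its $\dagger$-dual, turning the identity into a statement about the scalars $\tr(E_r E^*_0)$ computed in \eqref{eq:trErEs0}. Concretely, I would fix a $\Phi$-split basis $\{u_k\}_{k=0}^d$ with $u_k = \tau_k(A)v$, $0 \neq v \in E^*_0 V$, compute $G_i u_j = \nu \vphi_1^{-1}\cdots\vphi_i^{-1}\,\tau_i(A) E^*_0 E_0 \tau^*_i(A^*) \tau_j(A) v$, use $E_0 \tau^*_i(A^*)\tau_j(A) v \in E_0 V = \F E_0 v'$ for a suitable vector, reduce the scalar via \eqref{eq:E0Es0E0}, and check it equals $\delta_{i,j} u_i$. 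Since the problem statement cites this as \cite[Corollary 7.4]{NT:unit}, I would expect the published proof to follow essentially this verification, and I would present it in that order: recall the split decomposition actions, reduce to the scalar identity, apply \eqref{eq:E0Es0E0}, and conclude by uniqueness of $F_i$.
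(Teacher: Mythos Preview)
The paper does not actually prove this lemma; it is quoted from \cite[Corollary 7.4]{NT:unit} without argument. So there is nothing in the paper to compare your proof against, and your task is really to supply a self-contained verification.

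Your final strategy is the right one: write $G_i$ for the right-hand side, compute $G_i u_j$ on the $\Phi$-split basis $u_j = \tau_j(A)v$ with $0 \neq v \in E^*_0 V$, and show $G_i u_j = \delta_{i,j} u_i$. But the exploratory middle of your proposal contains incorrect claims that you should drop. In particular, $\tau^*_i(A^*) U_j \subseteq U_{j-i}$ is \emph{false} in general: from $A^* u_j = \th^*_j u_j + \vphi_j u_{j-1}$ one gets $(A^* - \th^*_k I)u_j = (\th^*_j - \th^*_k)u_j + \vphi_j u_{j-1}$, so $\tau^*_i(A^*) u_j$ is a linear combination of $u_j, u_{j-1}, \ldots, u_{\max(j-i,0)}$, not a vector in $U_{j-i}$ alone. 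Your attempts to salvage this via rank-one arguments or via $E^*_0 \tau_{j-i}(A) E^*_0 \in \F E^*_0$ are heading in circles.

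The clean missing ingredient is exactly Lemma~\ref{lem:tauiA} (equation~\eqref{eq:Es0tauitausjE0}), which the paper also quotes from the literature. Apply $\dagger$ to \eqref{eq:Es0tauitausjE0} to obtain
\[
  E_0\, \tau^*_i(A^*)\, \tau_j(A)\, E^*_0 \;=\; \delta_{i,j}\, \vphi_1 \cdots \vphi_i \, E_0 E^*_0.
\]
Since $v = E^*_0 v$, this gives immediately
\[
  G_i u_j
  = \frac{\nu}{\vphi_1 \cdots \vphi_i}\, \tau_i(A)\, E^*_0 \bigl( E_0\, \tau^*_i(A^*)\, \tau_j(A)\, E^*_0 \bigr) v
  = \delta_{i,j}\, \nu\, \tau_i(A)\, E^*_0 E_0 E^*_0\, v
  = \delta_{i,j}\, \tau_i(A)\, v
  = \delta_{i,j}\, u_i,
\]
using \eqref{eq:E0Es0E0} in the penultimate step. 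That is the whole proof. Your write-up should state \eqref{eq:Es0tauitausjE0} (or its $\dagger$-image) up front as the key identity, rather than leaving the $\delta_{i,j}$ mechanism unspecified; invoking only \eqref{eq:E0Es0E0}, as you do at the end, is not enough to produce the Kronecker delta.
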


\section{Self-dual Leonard pairs and systems}
\label{sec:selfdual}

Earlier we defined the concept of a self-dual  Leonard pair and system. 
In this section we make some observations about this concept.

\begin{lemma}    \label{lem:sd}    \samepage
\ifDRAFT {\rm lem:sd}. \fi
Let $A,A^*$ denote a self-dual Leonard pair on $V$,
and let $\sigma$ denote the duality $A \leftrightarrow A^*$.
Them $\sigma^2 = 1$.
\end{lemma}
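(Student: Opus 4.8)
The plan is to show that $\sigma^2$ is an automorphism of $\text{\rm End}(V)$ that fixes both $A$ and $A^*$, and then invoke the fact that $A, A^*$ generate $\text{\rm End}(V)$ (Lemma \ref{lem:generate}) to conclude $\sigma^2 = 1$.

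First I would unwind the definitions. By Definition \ref{def:duality}, $\sigma$ is the automorphism of $\text{\rm End}(V)$ that is simultaneously an isomorphism of Leonard pairs from $A, A^*$ to $A^*, A$ and from $A^*, A$ to $A, A^*$. An isomorphism of Leonard pairs from $A, A^*$ to $A^*, A$ means, by the definition recalled before Definition \ref{def:selfdual}, an $\F$-algebra automorphism sending $A \mapsto A^*$ and $A^* \mapsto A$. So $\sigma$ satisfies $A^\sigma = A^*$ and $(A^*)^\sigma = A$. Applying $\sigma$ a second time gives $A^{\sigma^2} = (A^*)^\sigma = A$ and $(A^*)^{\sigma^2} = A^\sigma = A^*$. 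Thus $\sigma^2$ is an $\F$-algebra automorphism of $\text{\rm End}(V)$ fixing $A$ and $A^*$.

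Next I would conclude. Since $\sigma^2$ is an algebra homomorphism fixing $A$ and $A^*$, it fixes every element of the subalgebra generated by $A$ and $A^*$, which by Lemma \ref{lem:generate} is all of $\text{\rm End}(V)$. Hence $\sigma^2 = 1$. (Equivalently, one can note that $\sigma^2$ is an isomorphism of Leonard pairs from $A, A^*$ to $A, A^*$, and such an isomorphism is unique — namely the identity — by the uniqueness remark following the definition of isomorphic Leonard pairs.)

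There is essentially no obstacle here; the statement is a formal consequence of the definition of the duality together with the generation lemma. The only point requiring the slightest care is correctly reading off from Definition \ref{def:duality} that $\sigma$ acts as the swap $A \leftrightarrow A^*$ on the generators — once that is in hand, the argument is a two-line computation plus a citation.
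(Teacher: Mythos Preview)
Your proof is correct and follows exactly the same approach as the paper's: show that $\sigma^2$ fixes both $A$ and $A^*$, then invoke Lemma \ref{lem:generate} to conclude $\sigma^2$ is the identity on all of $\text{\rm End}(V)$. The paper's version is simply more terse, saying ``by construction'' where you have spelled out the action of $\sigma$ on the generators.
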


\begin{proof}
By construction, $\sigma^2$ fixes each of $A$, $A^*$.
By this and Lemma \ref{lem:generate}, $\sigma^2$ fixes every element
of $\text{\rm End}(V)$. So $\sigma^2 = 1$.
\end{proof}

\begin{lemma}
Let $\Phi = (A; \{E_i\}_{i=0}^d; A^*; \{E^*_i\}_{i=0}^d)$ denote a self-dual Leonard system, 
and let $\sigma$ denote  the duality $\Phi \leftrightarrow \Phi^*$. 
Then the Leonard pair $A, A^*$ is self-dual. 
Moreover $\sigma$ is the duality $A \leftrightarrow A^*$.
\end{lemma}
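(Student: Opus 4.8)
The plan is to unwind the definitions and show that the automorphism $\sigma$ witnessing self-duality of the Leonard \emph{system} $\Phi$ is exactly the automorphism witnessing self-duality of the Leonard \emph{pair} $A,A^*$. First I would recall that, by Definition \ref{def:selfdualsystem}, $\sigma$ is an isomorphism of Leonard systems from $\Phi$ to $\Phi^*$, where $\Phi^* = (A^*; \{E^*_i\}_{i=0}^d; A; \{E_i\}_{i=0}^d)$. Spelling out what it means for $\sigma$ to send $\Phi$ to $\Phi^*$, we get $A^\sigma = A^*$, $(A^*)^\sigma = A$, together with $E_i^\sigma = E^*_i$ and $(E^*_i)^\sigma = E_i$ for $0 \le i \le d$. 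In particular $\sigma$ is an $\F$-algebra automorphism of $\text{\rm End}(V)$ that swaps $A$ and $A^*$. By Definition \ref{def:selfdual}, this already shows that the Leonard pair $A,A^*$ is isomorphic to $A^*,A$, hence self-dual, proving the first assertion.

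For the second assertion I would invoke the characterization given just before Definition \ref{def:duality}: the duality $A \leftrightarrow A^*$ is the \emph{unique} automorphism $\sigma'$ of $\text{\rm End}(V)$ that is an isomorphism of Leonard pairs from $A,A^*$ to $A^*,A$ (equivalently from $A^*,A$ to $A,A^*$). Since $\sigma$ satisfies $A^\sigma = A^*$ and $(A^*)^\sigma = A$, it is such an isomorphism of Leonard pairs, so by uniqueness $\sigma$ coincides with the duality $A \leftrightarrow A^*$. The uniqueness itself comes from Lemma \ref{lem:generate}: an $\F$-algebra automorphism of $\text{\rm End}(V)$ is determined by its action on $A$ and $A^*$, since these generate $\text{\rm End}(V)$.

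There is essentially no obstacle here; the content is purely a matter of checking that the data of a self-dual Leonard system, when forgotten down to its underlying pair, still records the same automorphism. The only mild subtlety worth a sentence is confirming that the idempotent conditions $E_i^\sigma = E^*_i$ are automatically present once we know $A^\sigma = A^*$ and $(A^*)^\sigma = A$ --- indeed, since $E_i$ is a polynomial in $A$ by \eqref{eq:Ei} and $\sigma$ is an algebra homomorphism, $E_i^\sigma$ is the same polynomial in $A^\sigma = A^*$, which is $E^*_i$ provided the eigenvalue orderings match; but we do not even need this implication for the proof, since the isomorphism-of-pairs structure is all that Definition \ref{def:duality} requires. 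So the proof is a short two-step argument: (1) $\sigma$ swaps $A$ and $A^*$, so $A,A^*$ is self-dual; (2) $\sigma$ is therefore, by uniqueness, the duality $A \leftrightarrow A^*$.
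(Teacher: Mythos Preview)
Your proposal is correct and takes essentially the same approach as the paper, which proves this lemma simply by the phrase ``By construction.'' You have merely spelled out in detail what that phrase means: $\sigma$ swaps $A$ and $A^*$ by definition of a Leonard-system isomorphism $\Phi \to \Phi^*$, hence the pair is self-dual, and uniqueness (via Lemma~\ref{lem:generate}) forces $\sigma$ to be the duality $A \leftrightarrow A^*$.
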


\begin{proof}
By construction.
\end{proof}

\begin{lemma} \label{lem:selfdualpair}    \samepage
\ifDRAFT {\rm lem:selfdualpair}. \fi
Let $A,A^*$ denote a self-dual Leonard pair, and let $\sigma$ denote the
duality $A \leftrightarrow A^*$.
Let $\{E_i\}_{i=0}^d$ denote a standard ordering of the primitive idempotents of $A$.
Then the following {\rm (i)--(iii)} hold:
\begin{itemize}
\item[\rm (i)]
$\{E^\sigma_i\}_{i=0}^d$ is a standard ordering of the primitive idempotents of $A^*$;
\item[\rm (ii)]
the sequence $\Phi =(A; \{E_i\}_{i=0}^d; A^*; \{E^\sigma_i\}_{i=0}^d)$ is a self-dual Leonard system;
\item[\rm (iii)]
$\sigma$ is the duality $\Phi \leftrightarrow \Phi^*$.
\end{itemize}
\end{lemma}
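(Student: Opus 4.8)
The plan is to verify the three claims in order, using the characterization of $\sigma$ as an isomorphism of Leonard pairs from $A,A^*$ to $A^*,A$ together with the Skolem--Noether description of such isomorphisms.

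\textbf{Setup.} Since $\sigma$ is the duality $A \leftrightarrow A^*$, it is by Definition \ref{def:duality} an isomorphism of Leonard pairs from $A,A^*$ to $A^*,A$; in particular $\sigma$ is an $\F$-algebra automorphism of $\text{\rm End}(V)$ with $A^\sigma = A^*$ and $(A^*)^\sigma = A$. First I would apply $\sigma$ to the defining data of the primitive idempotents of $A$: from $A = \sum_{i=0}^d \th_i E_i$, $E_iE_j = \delta_{i,j}E_i$, $\sum_i E_i = I$ we get $A^* = \sum_{i=0}^d \th_i E_i^\sigma$, $E_i^\sigma E_j^\sigma = \delta_{i,j} E_i^\sigma$, $\sum_i E_i^\sigma = I$, because $\sigma$ is an algebra automorphism and fixes $I$. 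Hence $\{E_i^\sigma\}_{i=0}^d$ is an ordering of the primitive idempotents of $A^*$ (with $E_i^\sigma$ the idempotent for eigenvalue $\th_i$).

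\textbf{(i) Standardness.} To see the ordering is standard, recall (from the discussion preceding Definition \ref{def:LS}) that $\{E_i\}_{i=0}^d$ standard means: picking $0\neq v_i \in E_iV$, the basis $\{v_i\}_{i=0}^d$ satisfies Definition \ref{def:LP}(ii), i.e. $A$ is represented by an irreducible tridiagonal matrix and $A^*$ by a diagonal matrix on this basis. Equivalently, $E_i^* A E_j^* \neq 0$ iff $|i-j|\le 1$ is \emph{not} quite the right phrasing; rather the clean way is: $\{E_i\}_{i=0}^d$ is standard for $A$ exactly when there is a Leonard system with $A$ as first map and $\{E_i\}$ as its idempotent sequence. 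Using Skolem--Noether, write $\sigma = \{X \mapsto KXK^{-1}\}$ for some invertible $K : V \to V$. Applying $K$ to a basis witnessing that $\{E_i\}$ is standard for $A$ transports the tridiagonal/diagonal pattern of $A,A^*$ to that of $A^\sigma = A^*$ and $(A^*)^\sigma = A$; so the image basis witnesses that $\{E_i^\sigma\}$ is standard for $A^*$ (with respect to $A$ playing the role of the "other" map). Concretely: $\{E_i\}$ standard for $A$ $\Leftrightarrow$ $(A;\{E_i\};A^*;\{E^*_i\})$ is a Leonard system for some standard $\{E^*_i\}$ $\Leftrightarrow$ (applying $\sigma$) $(A^*;\{E_i^\sigma\};A;\{(E^*_i)^\sigma\})$ is a Leonard system $\Rightarrow$ $\{E_i^\sigma\}$ is standard for $A^*$. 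This proves (i).

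\textbf{(ii) and (iii).} For (ii), by (i) the sequence $\Phi = (A;\{E_i\}_{i=0}^d; A^*; \{E_i^\sigma\}_{i=0}^d)$ has: $A,A^*$ a Leonard pair, $\{E_i\}$ a standard ordering of the primitive idempotents of $A$ (given), and $\{E_i^\sigma\}$ a standard ordering of the primitive idempotents of $A^*$ (by (i)); so $\Phi$ is a Leonard system by Definition \ref{def:LS}. Now $\Phi^* = (A^*;\{E_i^\sigma\}_{i=0}^d; A; \{E_i\}_{i=0}^d)$, and I claim $\Phi^\sigma = \Phi^*$: indeed $A^\sigma = A^*$, $(A^*)^\sigma = A$, $E_i^\sigma$ matches the second-position idempotent of $\Phi^*$, and $(E_i^\sigma)^\sigma = E_i$ (using $\sigma^2 = 1$ from Lemma \ref{lem:sd}) matches the fourth-position idempotent of $\Phi^*$. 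Thus $\sigma$ is an isomorphism of Leonard systems from $\Phi$ to $\Phi^*$, so $\Phi$ is isomorphic to $\Phi^*$, i.e. $\Phi$ is self-dual (Definition \ref{def:selfdualsystem}). For (iii), by the discussion preceding Definition \ref{def:duality2}, the duality $\Phi \leftrightarrow \Phi^*$ is the unique automorphism of $\text{\rm End}(V)$ that is an isomorphism of Leonard systems from $\Phi$ to $\Phi^*$ (and from $\Phi^*$ to $\Phi$); we have just exhibited $\sigma$ as such an automorphism, so $\sigma$ is the duality $\Phi \leftrightarrow \Phi^*$.

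\textbf{Main obstacle.} The only genuinely nontrivial point is (i): showing that the $\sigma$-image of a \emph{standard} ordering is again standard, rather than merely an ordering of the idempotents. The Skolem--Noether transport argument handles it cleanly — conjugation by $K$ is a similarity that simultaneously sends the matrix of $A$ to that of $A^*$ and of $A^*$ to that of $A$, preserving the irreducible-tridiagonal and diagonal patterns — but one must be a little careful about which of the two "other maps" (namely $A^*$ for the system with leading term $A$, versus $A$ for the system with leading term $A^*$) is being used to test standardness, and invoke the fact (stated in the introduction) that for a Leonard pair each map has exactly two standard orderings, one the reverse of the other, so no ambiguity survives. Everything else is bookkeeping with $\sigma^2 = 1$ and the definitions.
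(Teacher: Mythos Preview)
Your proof is correct and follows essentially the same approach as the paper: for (i) you apply $\sigma$ to an auxiliary Leonard system $(A;\{E_i\};A^*;\{E^*_i\})$ to conclude that $\{E_i^\sigma\}$ is standard for $A^*$, and for (ii)--(iii) you compute $\Phi^\sigma = \Phi^*$ using $\sigma^2=1$, exactly as the paper does. The Skolem--Noether basis-transport detour and the ``main obstacle'' discussion are extra commentary not present in the paper's proof, but the core argument is the same.
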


\begin{proof}
Note that $A^\sigma = A^*$ and $(A^*)^\sigma = A$.

(i)
Let $\{E^*_i\}_{i=0}^d$ denote a standard ordering of the primitive idempotents for $A^*$,
and consider the Leonard system
\[
   \Phi' = (A; \{E_i\}_{i=0}^d; A^*; \{E^*_i\}_{i=0}^d).
\]
We have $(\Phi') ^\sigma = (A^* ; \{E^\sigma_i\}_{i=0}^d; A; \{(E^*_i)^\sigma\}_{i=0}^d)$.
The result follows since $(\Phi')^\sigma$ is a Leonard system.

(ii), (iii)
By (i) above and the construction,
$\Phi$ is a Leonard system.
Applying $\sigma$ to $\Phi$ and using Lemma \ref{lem:sd}, we obtain
\[
      \Phi^\sigma = (A^*; \{E^\sigma_i\}_{i=0}^d; A; \{E_i\}_{i=0}^d) = \Phi^*.
\]
The result follows.
\end{proof}

The self-dual Leonard systems are characterized as follows.

\begin{lemma}  \cite[Proposition 8.7]{NT:affine}
\label{lem:selfdualparam}    \samepage
\ifDRAFT {\rm lem:selfdualparam}. \fi
Let $\Phi$ denote a Leonard system over $\F$ with
parameter array 
$(\{\th_i\}_{i=0}^d; \{\th^*_i\}_{i=0}^d; \{\vphi_i\}_{i=1}^d; \{\phi_i\}_{i=1}^d)$.
Then $\Phi$ is self-dual if and only if
\begin{align}
   \th_i &= \th^*_i  && (0 \leq i \leq d).          \label{eq:thths}
\end{align}
In this case
\begin{align}
  \phi_i &= \phi_{d-i+1}  &&  (1 \leq i \leq d).    \label{eq:phi}
\end{align}
\end{lemma}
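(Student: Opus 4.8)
The plan is to compare parameter arrays under the isomorphism $\Phi \cong \Phi^*$. First I would recall that by Definition \ref{def:selfdualsystem}, $\Phi$ is self-dual precisely when $\Phi$ and $\Phi^*$ are isomorphic, and by the comments below Definition \ref{def:LS} together with \cite[Theorem 1.9]{T:Leonard}, two Leonard systems are isomorphic if and only if they have the same parameter array. So the whole statement reduces to comparing the parameter array of $\Phi$ with that of $\Phi^*$, which is given explicitly in Lemma \ref{lem:Phis}(i) as $(\{\th^*_i\}_{i=0}^d; \{\th_i\}_{i=0}^d; \{\vphi_i\}_{i=1}^d; \{\phi_{d-i+1}\}_{i=1}^d)$.

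For the forward direction, suppose $\Phi$ is self-dual. Equating the first two components of the parameter array of $\Phi$ with those of $\Phi^*$ gives $\th_i = \th^*_i$ and $\th^*_i = \th_i$ for $0 \leq i \leq d$; the first of these is exactly \eqref{eq:thths}. Equating the fourth components gives $\phi_i = \phi_{d-i+1}$ for $1 \leq i \leq d$, which is \eqref{eq:phi}; so in fact \eqref{eq:phi} comes for free once we know $\Phi$ is self-dual, establishing the ``in this case'' clause simultaneously. (The third components agree automatically, giving no new information.) For the converse, suppose \eqref{eq:thths} holds. Then the parameter array of $\Phi$ is $(\{\th_i\}; \{\th_i\}; \{\vphi_i\}; \{\phi_i\})$, while that of $\Phi^*$ is $(\{\th_i\}; \{\th_i\}; \{\vphi_i\}; \{\phi_{d-i+1}\})$. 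These need not be literally equal, so I cannot yet conclude. The missing ingredient is that \eqref{eq:thths} forces $\phi_i = \phi_{d-i+1}$; this is a known constraint on parameter arrays — when the eigenvalue sequence equals the dual eigenvalue sequence, the recurrence relations (the ``$\beta$-recurrence'' / the equations PA1--PA5 characterizing parameter arrays in \cite{T:Leonard}) become symmetric under $i \mapsto d-i+1$ on the split sequences, forcing $\phi_i = \phi_{d-i+1}$. Granting that, the two parameter arrays coincide and $\Phi \cong \Phi^*$, so $\Phi$ is self-dual.

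The main obstacle is the converse direction, specifically justifying that \eqref{eq:thths} alone implies $\phi_i = \phi_{d-i+1}$ without circularity. Since this paper cites \cite[Proposition 8.7]{NT:affine} for the whole lemma, the cleanest route is simply to invoke that reference; but if a self-contained argument is wanted, I would derive it from the classification of parameter arrays: the conditions PA1--PA5 express $\{\vphi_i\}$ and $\{\phi_i\}$ in terms of $\{\th_i\}, \{\th^*_i\}$ and one additional free scalar, and the formula for $\phi_i$ (respectively $\phi_{d-i+1}$) in terms of the eigenvalue data is invariant under swapping $\th \leftrightarrow \th^*$ combined with $i \mapsto d-i+1$; when $\th_i = \th^*_i$ this invariance collapses to the identity $\phi_i = \phi_{d-i+1}$. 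Alternatively, one can use Lemma \ref{lem:Phis}(i) backwards: the parameter array of $(\Phi^*)$ has fourth component $\{\phi_{d-i+1}\}$, and applying Lemma \ref{lem:Phis}(i) to $\Phi^*$ recovers $\Phi$; combined with the constraint that a parameter array is determined by its first three components up to the sign/ordering ambiguity in the fourth, one extracts $\phi_i = \phi_{d-i+1}$. Either way, once \eqref{eq:phi} is in hand the argument closes cleanly, and I would present the reduction to parameter arrays as the conceptual core and relegate the PA1--PA5 bookkeeping to a citation.
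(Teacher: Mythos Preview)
The paper does not prove this lemma at all; it simply cites \cite[Proposition 8.7]{NT:affine}. Your approach via comparison of parameter arrays is exactly the natural one and is essentially how the cited reference establishes it. The forward direction and the ``in this case'' clause are clean as you wrote them.

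For the converse, you correctly isolate the real content: showing that $\th_i=\th^*_i$ alone forces $\phi_i=\phi_{d-i+1}$. This does follow from condition (PA4) of \cite[Theorem 1.9]{T:Leonard}, namely
\[
\phi_i \;=\; \vphi_1 \sum_{h=0}^{i-1}\frac{\th_h-\th_{d-h}}{\th_0-\th_d}\;+\;(\th^*_i-\th^*_0)(\th_{d-i+1}-\th_0).
\]
Write $S_i$ for the sum. The antisymmetry of $h\mapsto d-h$ on the summand gives $\sum_{h=0}^d(\th_h-\th_{d-h})=0$, and a short reindexing then yields $S_i=S_{d-i+1}$. Once $\th^*=\th$, the product term $(\th_i-\th_0)(\th_{d-i+1}-\th_0)$ is visibly symmetric under $i\leftrightarrow d-i+1$, so $\phi_i=\phi_{d-i+1}$ and the two parameter arrays coincide. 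This is the bookkeeping you were gesturing at; it is short enough to include rather than relegate.

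One correction: your ``alternative'' route at the very end --- that the parameter array is determined by its first three components up to an ordering ambiguity in the fourth --- is not a valid principle and should be dropped. Stick with the (PA4) computation or the direct citation.
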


Let $\Phi = (A; \{E_i\}_{i=0}^d; A^*; \{E^*_i\}_{i=0}^d)$
denote a self-dual Leonard system on $V$,
and let $\sigma$ denote the duality $\Phi \leftrightarrow \Phi^*$.
Our next general goal is to describe $\sigma$.  
To do this we will display an invertible $T \in \text{\rm End}(V)$ that gives $\sigma$.

\section{The element $T$}
\label{sec:main}

For the rest of the paper, fix a Leonard system on $V$:
\begin{equation}
  \Phi = (A; \{E_i\}_{i=0}^d; A^*; \{E^*_i\}_{i=0}^d).                        \label{eq:Phi2}
\end{equation}
In this section we introduce an element $T \in \text{\rm End}(V)$;
this element will be used to describe the duality $\Phi \leftrightarrow \Phi^*$
in the self-dual case.
Let 
\[
   (\{\th_i\}_{i=0}^d; \{\th^*_i\}_{i=0}^d; \{\vphi_i\}_{i=1}^d; \{\phi_i\}_{i=1}^d).
\]
denote the parameter array of $\Phi$.
Let $\dagger$ denote the antiautomorphism of $\text{\rm End}(V)$ that
fixes each of $A$, $A^*$.
Let $\b{\; , \,}$ denote the bilinear form on $V$ associated with $\dagger$,
as discussed at the end of Section \ref{sec:anti}.

\begin{definition}    \label{def:T}     \samepage
\ifDRAFT {\rm def:T}. \fi
Define $T \in \text{\rm End}(V)$ by
\begin{equation}
   T = \sum_{i=0}^d \eta_{d-i}(A) E^*_0 E_d \tau^*_i(A^*).      \label{eq:T}
\end{equation}
\end{definition}

\begin{note}    
Sometimes it is convenient to express $T$ as a polynomial in $A, A^*$.
Evaluating \eqref{eq:T} using \eqref{eq:E0Ed} we get
\[
 T =  \sum_{i=0}^d \frac{\eta_{d-i}(A) \eta^*_d(A^*) \tau_d(A) \tau^*_i (A^*)}
                                {\tau_d(\th_d) \eta^*_d(\th^*_0)}.
\]
\end{note}

We have
\begin{align*}
 T^* &= \sum_{i=0}^d \eta^*_{d-i}(A^*) E_0 E^*_d \tau_i(A),  
\\
 T^\dagger &= \sum_{i=0}^d \tau^*_i(A^*) E_d E^*_0 \eta_{d-i}(A), 
\\
 (T^*)^\dagger &= \sum_{i=0}^d \tau_i(A) E^*_d E_0 \eta^*_{d-i}(A^*).  
\end{align*}

We now state our first main result.

\begin{theorem}    \label{thm:main}    \samepage
\ifDRAFT {\rm thm:main}. \fi
Assume that $\Phi$ is self-dual.
Then the elements $T$, $T^*$, $T^\dagger$, $(T^*)^\dagger$
are equal and this common element gives the duality $\Phi \leftrightarrow \Phi^*$.
\end{theorem}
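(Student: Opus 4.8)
The plan is to prove the theorem in three stages: first show the four elements coincide, then show $T$ gives the duality. I would begin by computing the "symmetry" of $T$ under $*$ and $\dagger$. Observe that in the self-dual case we have $\th_i = \th^*_i$ by Lemma \ref{lem:selfdualparam}, so $\tau_i = \tau^*_i$ and $\eta_i = \eta^*_i$ as polynomials, and moreover $A$ and $A^*$ play symmetric roles. Consequently the expression $T^* = \sum_i \eta^*_{d-i}(A^*) E_0 E^*_d \tau_i(A)$ obtained by applying $*$ to \eqref{eq:T} should be manipulated into $T$ itself. The key structural fact I would invoke is the relation between $E_0, E^*_d$ and the split decomposition: the element $E^*_0 E_d$ appearing in $T$ is (up to scalars) the projection-type object tied to the $\Phi$-split decomposition, and by Lemma \ref{lem:Fi} one has a clean formula for the $F_i$ in terms of $\tau_i(A) E^*_0 E_0 \tau^*_i(A^*)$. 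An analogous lemma (or a relative of Lemma \ref{lem:Fi} under the $D_4$-action) should express the split decomposition of $\Phi^\downarrow$ or $\Phi^\Downarrow$ using $E^*_0 E_d$, and this is exactly what sits inside $T$.

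The cleanest route to "the four elements are equal" is probably this: apply $\dagger$ to \eqref{eq:T}. Since $\dagger$ is an antiautomorphism fixing $A$ and $A^*$, and since by Lemma \ref{lem:dagger2} every polynomial in $A$ (resp. $A^*$) is $\dagger$-fixed, we get $T^\dagger = \sum_i \tau^*_i(A^*) (E_d)^\dagger (E^*_0)^\dagger \eta_{d-i}(A) = \sum_i \tau^*_i(A^*) E_d E^*_0 \eta_{d-i}(A)$, which is the displayed formula for $T^\dagger$. So the real content is $T = T^\dagger$ (equivalently $T$ is self-adjoint with respect to $\b{\;,\,}$) together with $T = T^*$; the fourth equality $T^\dagger = (T^*)^\dagger$ then follows by applying $\dagger$ to $T = T^*$, and $(T^*)^\dagger$ matching its displayed formula is again just Lemma \ref{lem:dagger2}. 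To prove $T = T^*$ I would show both equal a single $D_4$-symmetric object. One natural candidate: using \eqref{eq:E0Es0E0}-type relations and the action of $A - \th_i I$, $A^* - \th^*_i I$ on the split decomposition, rewrite $\sum_i \eta_{d-i}(A) E^*_0 E_d \tau^*_i(A^*)$ as a sum over the split components, i.e. relate it to $\sum_i F_i = I$ twisted by the duality data, and observe the resulting description is manifestly invariant under swapping $A \leftrightarrow A^*$ (which is legitimate precisely because $\th_i = \th^*_i$). To prove $T = T^\dagger$ I would use that $\b{\;,\,}$ is symmetric (from \cite[Corollary 15.4]{T:qRacah}) together with $\dagger$-invariance of the polynomial subalgebras, reducing $\b{Tu,v} = \b{u,Tv}$ to a statement about $\b{E_d u', E^*_0 v'}$-type pairings that is symmetric by inspection.

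For the second stage — that $\sigma: X \mapsto TXT^{-1}$ is the duality $\Phi \leftrightarrow \Phi^*$ — I first need $T$ invertible; the excerpt says this is deferred to Section \ref{sec:formulaT}, so I may take it as given here (or note it follows once we exhibit the action of $T$ on a basis). Granting invertibility, it suffices by Lemma \ref{lem:generate} to check $T A T^{-1} = A^*$ and $T A^* T^{-1} = A$, i.e. $TA = A^* T$ and $T A^* = A T$. These two identities are $*$-images of each other (swap $A \leftrightarrow A^*$, using $\th_i = \th^*_i$ and $T = T^*$), so it is enough to prove one, say $TA = A^*T$. I would verify this directly from \eqref{eq:T} using the split-decomposition relations $(A - \th_i I)U_i = U_{i+1}$, $(A^* - \th^*_i I)U_i = U_{i-1}$, $\tau_i(A)U_0 = U_i$, $\eta^*_i(A^*)U_d = U_{d-i}$, plus $E^*_0 E_d$ mapping into $U_0 \cap (\text{something})$; essentially one commutes $A$ past $\eta_{d-i}(A)$ trivially on the left and uses the three-term recurrences to re-index $i \mapsto i\pm 1$, matching the action of $A^*$ on the $\tau^*_i(A^*)$ factor on the right.

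\textbf{Main obstacle.} I expect the crux to be the identity $T = T^*$ (equivalently, giving a single manifestly-symmetric closed form for $T$): the two displayed sums look different, and bridging them requires exploiting both $\th_i = \th^*_i$ and $\phi_i = \phi_{d-i+1}$ from Lemma \ref{lem:selfdualparam} together with a nontrivial rearrangement through the split decomposition — this is where a clever choice of intermediate object (presumably related to $\nu$, $F_i$, and the $D_4$-twisted analogue of Lemma \ref{lem:Fi}) does all the work. The commutation relations $TA = A^*T$ are, by comparison, a mechanical consequence of the split-decomposition recurrences once $T$ is written in the right form.
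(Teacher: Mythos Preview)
Your sketch for the commutation $AT = TA^*$ via a telescoping argument on the summands $T_i = \eta_{d-i}(A)E^*_0E_d\tau^*_i(A^*)$ is exactly what the paper does (Lemma \ref{lem:ATTAs}). However, the overall architecture of your proposal diverges from the paper in an important way, and your plan for $T=T^*$ is where the real gap lies.

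The paper does \emph{not} first prove $T=T^*=T^\dagger$ and then establish the duality; it reverses the order. The key missing ingredient in your proposal is the explicit computation of $T^2$: the paper first shows (Lemma \ref{lem:T2}, Proposition \ref{prop:T2}) that $T^2 = (\nu^\Downarrow)^{-2}\phi_1\cdots\phi_d\, I$, using Lemma \ref{lem:Fi} applied to $\Phi^\Downarrow$ together with $\phi_i=\phi_{d-i+1}$. This simultaneously gives invertibility of $T$ and, once $AT=TA^*$ is known, immediately yields $A^*T=TA$ by conjugating. So $T$ gives the duality $\Phi\leftrightarrow\Phi^*$ \emph{before} any equality among $T,T^*,T^\dagger$ is established.

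With the duality in hand, the paper's argument for $T=T^*$ is quite different from your ``manifestly symmetric closed form'' idea: since $T^*$ (by the same reasoning applied to $\Phi^*$) also gives the duality, the Skolem--Noether uniqueness forces $T^*=\zeta T$ for some nonzero $\zeta$, and then a trace comparison (Lemmas \ref{lem:TEs0}, \ref{lem:TE0}: $\tr(T^*E_0)=\tr(TE_0)\neq 0$ under $\th_i=\th^*_i$) pins down $\zeta=1$. The same mechanism handles $T^\dagger$. Your proposed route --- rewriting $T$ as a sum over split components so that the $A\leftrightarrow A^*$ symmetry becomes visible --- is not obviously workable: $T$ contains $E^*_0E_d$ while $T^*$ contains $E_0E^*_d$, and these are \emph{not} equal in the self-dual case (they are swapped by the very duality you are constructing), so a direct symmetric rewriting would have to somehow absorb that asymmetry. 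The paper sidesteps this entirely via the proportionality-plus-trace trick.
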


Our proof of Theorem \ref{thm:main} is contained in Section \ref{sec:proofmain}.

\section{Some products}
\label{sec:formulaT}

We continue to discuss the Leonard system $\Phi$ from \eqref{eq:Phi2}.
Recall the element $T$ from Definition \ref{def:T}.
In this section we consider the elements $T$, $T^*$, $T^\dagger$, $(T^*)^\dagger$.
We obtain formulas for the products of these elements with the
elements $E_0$, $E^*_0$.
These formulas are used to show that $T=T^* = T^\dagger$ 
in our proof of Theorem \ref{thm:main}.

\begin{lemma}   \label{lem:TEs0}    \samepage
\ifDRAFT {\rm lem:TEs0}. \fi
We have
\begin{align}
 T E^*_0 &= 
  \frac{\eta_d(\th_0) \vphi_1 \cdots \vphi_d}
         {\tau_d(\th_d) \eta^*_d(\th^*_0)} \,
     E_0 E^*_0,                                                      \label{eq:TEs0}
\\
 T^* E_0 &=
  \frac{\eta^*_d(\th^*_0) \vphi_1 \cdots \vphi_d}
         {\tau^*_d (\th^*_d) \eta_d (\th_0)} \,
        E^*_0 E_0,                                                     \label{eq:TsE0}
\\
 E^*_0 T^\dagger &=
     \frac{\eta_d(\th_0) \vphi_1 \cdots \vphi_d}
         {\tau_d(\th_d) \eta^*_d(\th^*_0)} \,
    E^*_0 E_0,                                                      \label{eq:Es0Td}
\\
  E_0 (T^*)^\dagger  &=
    \frac{\eta^*_d(\th^*_0) \vphi_1 \cdots \vphi_d}
         {\tau^*_d (\th^*_d) \eta_d (\th_0)} \,
  E_0 E^*_0.                                                 \label{eq:E0Tsd}
\end{align}
\end{lemma}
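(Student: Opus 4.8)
The plan is to prove the four identities by direct computation, exploiting the structure of $T$ as a sum over the $\Phi$-split decomposition together with the trace identity \eqref{eq:E0Es0E0}. First I would record that, by the action of $A$ and $A^*$ on a $\Phi$-split basis and the formula $\tau_i(A)U_0 = U_i$, one has $E^*_0 U_i = 0$ for $i \geq 1$ and $E_d U_i = 0$ for $i \leq d-1$; hence in the sum \eqref{eq:T} only the behavior of $\eta_{d-i}(A)E^*_0E_d\tau^*_i(A^*)$ on $E^*_0V$ matters. Since $\tau^*_i(A^*)$ acts on $E^*_0V$ as the scalar $\tau^*_i(\th^*_0)$, which vanishes for $i\geq 1$ (the first factor is $A^*-\th^*_0I$), the entire sum collapses: $TE^*_0 = \eta_d(A)E^*_0E_d\tau^*_0(A^*)E^*_0 = \eta_d(A)E^*_0E_dE^*_0$. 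Now $\eta_d(A) = \eta_d(\th_0)E_0$ by Lemma \ref{lem:E0Ed}, and I would evaluate $E^*_0E_dE^*_0$ via the trace identities in Section \ref{sec:trace}: more precisely $E^*_0E_dE^*_0$ is a scalar multiple of $E^*_0$ (apply $\dagger$-symmetry and the fact that $E^*_0\gen{A}E^*_0$ is one-dimensional spanned by $E^*_0$), and the scalar is $\tr(E^*_0E_dE^*_0)/\tr(E^*_0) = \tr(E_dE^*_0)$, which is given by \eqref{eq:trErEsd} with $r=0$, namely $\vphi_1\cdots\vphi_d/(\tau^*_d(\th^*_d)\eta_d(\th_0))$. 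Combining, $TE^*_0 = \eta_d(\th_0)\cdot\tr(E_dE^*_0)\cdot E_0E^*_0$; but this gives a coefficient $\vphi_1\cdots\vphi_d/\tau^*_d(\th^*_d)$, so I would instead route the computation through $E_0E^*_0$ directly by writing $E^*_0E_dE^*_0 = c\,E^*_0$ and then $E_0E^*_0E_dE^*_0 = cE_0E^*_0$, and separately match the stated constant $\eta_d(\th_0)\vphi_1\cdots\vphi_d/(\tau_d(\th_d)\eta^*_d(\th^*_0))$ using \eqref{eq:trErEs0} or the expressions for $\nu$ in \eqref{eq:nud}, \eqref{eq:nuD}; the bookkeeping of which of the four trace formulas produces the advertised constant is the one place care is needed.

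Next I would observe that the remaining three identities follow from the first by applying the symmetry operations, rather than repeating the calculation. Applying $*$ to \eqref{eq:TEs0} — which swaps $A\leftrightarrow A^*$, hence $E_0\leftrightarrow E^*_0$, $\th_i\leftrightarrow\th^*_i$, $\tau_i\leftrightarrow\tau^*_i$, $\eta_i\leftrightarrow\eta^*_i$, and sends $T\mapsto T^*$ — yields \eqref{eq:TsE0}. Applying $\dagger$ to \eqref{eq:TEs0}: since $\dagger$ is an antiautomorphism fixing $A$ and $A^*$, it fixes every element of $\gen{A}$ and $\gen{A^*}$ by Lemma \ref{lem:dagger2}, so $(TE^*_0)^\dagger = (E^*_0)^\dagger T^\dagger = E^*_0T^\dagger$ and $(E_0E^*_0)^\dagger = E^*_0E_0$, giving \eqref{eq:Es0Td} with the same constant. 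Finally, applying both $*$ and $\dagger$ (in either order) to \eqref{eq:TEs0} gives \eqref{eq:E0Tsd}. I should double-check that the constants in \eqref{eq:Es0Td} and \eqref{eq:TEs0} genuinely coincide as written (they do, since $\dagger$ acts as the identity on the scalar field), and that the $*$-image of the constant in \eqref{eq:TEs0} is exactly the constant in \eqref{eq:TsE0} after the substitution $\th_i\leftrightarrow\th^*_i$.

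The main obstacle is the first identity's constant: confirming that $\eta_d(A)E^*_0E_dE^*_0$, after evaluating $\eta_d(A)=\eta_d(\th_0)E_0$ and $E^*_0E_dE^*_0 = \tr(E_dE^*_0)\,E^*_0$, reorganizes as $\bigl(\eta_d(\th_0)\vphi_1\cdots\vphi_d/(\tau_d(\th_d)\eta^*_d(\th^*_0))\bigr)E_0E^*_0$. This requires reconciling $\eta_d(\th_0)\tr(E_dE^*_0)$ — which by \eqref{eq:trErEsd} at $r=0$ equals $\eta_d(\th_0)\vphi_1\cdots\vphi_d/(\tau^*_d(\th^*_d)\eta_d(\th_0)) = \vphi_1\cdots\vphi_d/\tau^*_d(\th^*_d)$ — with the target, so in fact I expect the correct intermediate step is $E^*_0E_dE^*_0 = \tr(E^*_dE_0\cdots)$ via a different trace, or equivalently that the right normalization uses $E_0E^*_0E_0$ from \eqref{eq:E0Es0E0}; I would resolve this by first establishing $TE^*_0\in E_0\text{End}(V)E^*_0 = \F\,E_0E^*_0$ (a one-dimensional space, nonzero since $\tr(E_0E^*_0)\neq 0$), so that only the scalar must be pinned down, and then determine that scalar by multiplying on the left by $E^*_0$ and taking traces, reducing everything to the explicit trace formulas \eqref{eq:trErEs0}--\eqref{eq:trEsrEd} and the identities \eqref{eq:nud}--\eqref{eq:nuD}. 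Once the scalar in \eqref{eq:TEs0} is verified, the other three are immediate by symmetry.
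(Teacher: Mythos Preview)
Your approach is essentially the paper's: collapse the sum defining $T$ using $\tau^*_i(A^*)E^*_0=\tau^*_i(\th^*_0)E^*_0=\delta_{i,0}E^*_0$, replace $\eta_d(A)$ by $\eta_d(\th_0)E_0$, evaluate $E^*_0E_dE^*_0$ as a scalar times $E^*_0$, and then derive the other three identities by applying $*$ and $\dagger$. The paper does exactly this, writing the scalar as $(\nu^\Downarrow)^{-1}$ and reading off its value from \eqref{eq:nuD}.

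Your confusion in the last paragraph is a bookkeeping slip, not a real obstacle. You want $\tr(E_dE^*_0)$, and the correct citation is \eqref{eq:trErEs0} with $r=d$, which gives
\[
\tr(E_dE^*_0)=\frac{\vphi_1\cdots\vphi_d}{\eta^*_d(\th^*_0)\,\tau_d(\th_d)}.
\]
You instead cited \eqref{eq:trErEsd} with $r=0$, which computes $\tr(E_0E^*_d)$, a different quantity. With the correct formula,
\[
TE^*_0=\eta_d(\th_0)\,E_0\cdot\tr(E_dE^*_0)\,E^*_0
=\frac{\eta_d(\th_0)\,\vphi_1\cdots\vphi_d}{\tau_d(\th_d)\,\eta^*_d(\th^*_0)}\,E_0E^*_0,
\]
matching \eqref{eq:TEs0} immediately; no further reconciliation is needed, and the detour through $E_0\,\text{End}(V)\,E^*_0$ and left-multiplication by $E^*_0$ can be dropped. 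The preliminary remarks about $E^*_0U_i$ and $E_dU_i$ are also unnecessary: the single observation $\tau^*_i(\th^*_0)=\delta_{i,0}$ already does the job.
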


\begin{proof}
We first show \eqref{eq:TEs0}.
In \eqref{eq:T}, multiply each side on the right by $E^*_0$.
Simplify the result using
$\tau^*_i (A^*) E^*_0 = \tau^*_i(\th^*_0) E^*_0$
and $\tau^*_i(\th^*_0) = \delta_{i,0}$ $(0 \leq i \leq d)$ to get
\[
    T E^*_0 = \eta_d(A) E^*_0 E_d E^*_0.
\]
By \eqref{eq:E0Ed} we have $\eta_d(A) = \eta_d(\th_0) E_0$.
By \eqref{eq:E0Es0E0} applied to $\Phi^\Downarrow$,
$E^*_0 E_d E^*_0 = (\nu^\Downarrow)^{-1} E^*_0$.
By these comments and \eqref{eq:nuD} we obtain \eqref{eq:TEs0}.
The line \eqref{eq:TsE0} is obtained by applying \eqref{eq:TEs0} to $\Phi^*$.
The lines \eqref{eq:Es0Td} and \eqref{eq:E0Tsd} are obtained by applying $\dagger$
to \eqref{eq:TEs0} and \eqref{eq:TsE0}, respectively.
\end{proof}

\begin{lemma}  \cite[Lemma 7.1]{NT:maps}
\label{lem:tauiA}       \samepage
\ifDRAFT {\rm lem:tauiA}. \fi
For $0 \leq i,j \leq d$,
\begin{align}    
 E^*_0 \tau_i(A)\tau^*_j(A^*)E_0 &= 
  \delta_{i,j} \, \varphi_1\varphi_2\cdots\varphi_i \, E^*_0E_0.         \label{eq:Es0tauitausjE0} 
\end{align}
\end{lemma}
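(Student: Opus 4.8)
\textbf{Proof plan for Lemma \ref{lem:tauiA}.}

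The plan is to compute the left-hand side by expanding $\tau_i(A)$ and $\tau^*_j(A^*)$ in bases of $\gen{A}$ and $\gen{A^*}$ that interact cleanly with the projections $E^*_0$ and $E_0$. The natural choice is the $\Phi$-split decomposition machinery from Section \ref{sec:parray}: there the operator $\tau_i(A)$ maps $U_0$ onto $U_i$, and $\tau^*_j(A^*)$ behaves like a lowering operator on the split decomposition. First I would observe that $E_0 V = U_d \cap (\text{something})$ is not quite the right object; instead, the key is that $E^*_0 V = U_0$ and $E_d V$ sits at the bottom of the split flag, so one should instead pass to the split basis $\{u_i\}_{i=0}^d$ with $u_i = \tau_i(A)v$ for a nonzero $v \in E^*_0 V$, and track the action of $\tau^*_j(A^*)$ on it.

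Concretely, here are the steps I would carry out. (1) Fix a nonzero $v \in E^*_0 V$ and set $u_i = \tau_i(A)v$, so $\{u_i\}_{i=0}^d$ is a $\Phi$-split basis and $0 \ne u_i \in U_i$. (2) Use the relations $(A^* - \th^*_i I)U_i = U_{i-1}$ repeatedly to show that $\tau^*_j(A^*)u_i \in U_{i-j}$ (interpreted as $0$ if $i<j$); in fact I expect $\tau^*_j(A^*)u_i$ to be a known scalar multiple of $u_{i-j}$ only after applying further projections, so more carefully I would track $E^*_0 \tau^*_j(A^*) u_i$. (3) Apply $\tau_i(A)$ to $v$, then $\tau^*_j(A^*)$, then note that $E^*_0$ annihilates every $U_k$ with $k \ge 1$ (since $E^*_0 V = U_0$ and $E^*_0$ kills $U_1 + \cdots + U_d$ as these are complementary), so $E^*_0 \tau^*_j(A^*)\tau_i(A)v$ picks out the $U_0$-component, which is nonzero only when the net "raising by $i$, lowering by $j$" returns to $U_0$, i.e.\ when $i=j$. (4) When $i=j$, compute the resulting scalar: raising from $U_0$ to $U_i$ via $\tau_i(A)$ contributes nothing extra (the $u_i$ are defined that way), and lowering from $U_i$ back to $U_0$ via $\tau^*_i(A^*)$ contributes the product $\vphi_1 \vphi_2 \cdots \vphi_i$ — this is exactly the content of how $A^*$ acts on the split basis via the superdiagonal entries $\vphi_k$. (5) Finally, reinsert $E_0$ on the right: since $v \in E^*_0 V$ and we want the coefficient of $E^*_0 E_0$, I would use \eqref{eq:E0Es0E0} (the relation $\nu E^*_0 E_0 E^*_0 = E^*_0$) to identify $E^*_0(\cdot)E_0$ as the appropriate multiple of $E^*_0 E_0$, matching constants by evaluating both sides on a convenient vector or by taking a trace.

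The main obstacle I anticipate is step (4): pinning down the exact scalar $\vphi_1 \cdots \vphi_i$ produced when $\tau^*_i(A^*)$ lowers $u_i$ back down to $U_0$. The split-basis matrix for $A^*$ is upper bidiagonal with diagonal $\th^*_k$ and superdiagonal $\vphi_k$, so $(A^* - \th^*_i I)$ sends $u_i \mapsto \vphi_i u_{i-1}$ plus possibly lower terms; iterating $(A^*-\th^*_{i-1}I)\cdots$ down to $0$ and carefully checking that the cross terms (from the non-strict triangularity) all land in $U_{\ge 1}$ and hence die under $E^*_0$ requires a short but careful induction. An alternative that sidesteps the bookkeeping is to instead cite or re-derive the formula \eqref{eq:Fi} for the split projector $F_i$ and combine it with $\tau_i(A)E^*_0 = $ (something) $\cdot F_i$-type identities; but the bidiagonal-action argument is the most transparent. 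Once the $i=j$ scalar is correct, the $i \ne j$ vanishing and the reduction to $E^*_0 E_0$ are routine given \eqref{eq:E0Es0E0}.

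\qed
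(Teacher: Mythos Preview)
The paper does not prove this lemma at all; it is quoted from \cite[Lemma~7.1]{NT:maps}. So there is no ``paper's own proof'' to compare against. That said, your plan contains a genuine error worth flagging.

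In step~(3) you assert that $E^*_0$ annihilates $U_k$ for $k\ge 1$, reasoning that $U_0=E^*_0V$ and $U_1+\cdots+U_d$ is a complement. But complements are not unique: the projection $E^*_0$ kills $E^*_1V+\cdots+E^*_dV$, not every complement of $E^*_0V$. From the split decomposition one has $U_1+\cdots+U_d = E_1V+\cdots+E_dV$, which is $\ker E_0$, not $\ker E^*_0$. Concretely, $u_1=(A-\th_0 I)v$ with $v\in E^*_0V$ has $E^*_0 u_1 = (a_0-\th_0)v$ where $a_0$ is the $(0,0)$ entry of the tridiagonal matrix for $A$; this is generically nonzero. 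So the projection step as written fails.

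There is a second, related issue: you are computing $\tau^*_j(A^*)\tau_i(A)$ applied to $E^*_0V$, whereas the lemma concerns $\tau_i(A)\tau^*_j(A^*)$ applied to $E_0V$. These differ both in the order of the factors and in the right-hand idempotent. Fortunately the two issues cancel: what your split-basis computation (once corrected to project by $E_0$ rather than $E^*_0$) actually establishes is
\[
  E_0\,\tau^*_j(A^*)\,\tau_i(A)\,E^*_0 \;=\; \delta_{i,j}\,\vphi_1\cdots\vphi_i\,E_0E^*_0,
\]
and applying $\dagger$ (Lemma~\ref{lem:dagger2}) to both sides gives \eqref{eq:Es0tauitausjE0}. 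With that correction your argument goes through; step~(4) is fine (the $(0,i)$ entry of $\tau^*_j(B^*)$ for the upper-bidiagonal $B^*$ is exactly $\delta_{i,j}\vphi_1\cdots\vphi_i$, by an easy induction on $j$), and step~(5) becomes unnecessary once you realize you have proved the $\dagger$-image directly.
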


\begin{lemma}   \label{lem:TE0}    \samepage
\ifDRAFT {\rm lem:TE0}. \fi
We have
\begin{align}
 T E_0 &= \frac{\vphi_1 \cdots \vphi_d}{\tau_d(\th_d)} \, E^*_0 E_0,     \label{eq:TE0}
\\
 T^* E^*_0 &=  \frac{\vphi_1 \cdots \vphi_d}{\tau^*_d(\th^*_d)} \, E_0 E^*_0,     \label{eq:TsEs0}
\\
 E_0 T^\dagger &=   \frac{\vphi_1 \cdots \vphi_d}{\tau_d(\th_d)} \, E_0 E^*_0,     \label{eq:E0Td}
\\
  E^*_0 (T^*)^\dagger  &= \frac{ \vphi_1 \cdots \vphi_d}{\tau^*_d(\th^*_d)} \, E^*_0 E_0.    \label{eq:Es0Tsd}
\end{align}
\end{lemma}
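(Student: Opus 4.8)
The plan is to derive \eqref{eq:TE0} in close parallel with the proof of \eqref{eq:TEs0}, and then obtain the remaining three identities by the same symmetry operations. First I would take the defining expression \eqref{eq:T} for $T$ and multiply each side on the right by $E_0$. Unlike the computation for $TE^*_0$, here the factor $\tau^*_i(A^*)$ sits to the right and does \emph{not} collapse against $E_0$; instead the troublesome factors are $\eta_{d-i}(A)$ on the left and $E_d$ in the middle. I would rewrite $\eta_{d-i}(A)$ using the substitution $\eta_{d-i}(A) = \eta_{d-i}(\th_d) E_d + (\text{terms killed on the right by } E_d)$ — more precisely, use $E_d\eta_{d-i}(A)^{\dagger}$-type manipulations, but it is cleaner to transpose: apply $\dagger$ to \eqref{eq:TE0} so that the known quantity $E^*_0 T^{\dagger}$ from \eqref{eq:Es0Td} comes into play. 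Concretely, $\dagger$ applied to \eqref{eq:TE0} would read $E_0 T^{\dagger} = (\vphi_1\cdots\vphi_d/\tau_d(\th_d))\,E_0 E^*_0$, which is exactly \eqref{eq:E0Td}; so the four lines of this lemma are two genuinely distinct statements, each with its $\dagger$-image.

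For the genuinely new statement \eqref{eq:TE0} itself, the approach I favor is to not expand \eqref{eq:T} directly but to combine the already-proven Lemma \ref{lem:TEs0} with Lemma \ref{lem:tauiA}. Note $T E_0 = T E^*_0 E_0 + T(I - E^*_0)E_0$; the first term is handled by \eqref{eq:TEs0} together with \eqref{eq:E0Es0E0}, giving a scalar multiple of $E^*_0 E_0$. So the real content is to show $T(I-E^*_0)E_0$ contributes nothing new, i.e. to control $T$ against all of $E_0$, not just against $E^*_0$. Alternatively — and this is probably the slicker route — expand \eqref{eq:T}, multiply on the right by $E_0$, use $E_d = \tau_d(A)/\tau_d(\th_d)$ from \eqref{eq:E0Ed} and $E^*_0 = \eta^*_d(A^*)/\eta^*_d(\th^*_0)$, so that $T E_0$ becomes a sum over $i$ of terms of the shape $\eta_{d-i}(A)\eta^*_d(A^*)\tau_d(A)\tau^*_i(A^*)E_0/(\tau_d(\th_d)\eta^*_d(\th^*_0))$. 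Now sandwich a resolution of the identity or, better, observe that $\eta_{d-i}(A)\tau_d(A) = \tau_d(\th_d)\eta_{d-i}(A)E_d$ modulo the kernel issues, reduce everything to $E_d$, and invoke Lemma \ref{lem:tauiA} (applied to the relative $\Phi^{\Downarrow}$, where $\eta,\tau$ and $E_0,E_d$ swap roles appropriately) to see that only the $i$ for which the product $\eta^*_d(A^*)\tau^*_i(A^*)$ lands correctly survives, collapsing the sum to the single scalar $\vphi_1\cdots\vphi_d/\tau_d(\th_d)$ times $E^*_0 E_0$.

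Once \eqref{eq:TE0} is established, \eqref{eq:TsEs0} follows by applying the $*$ operation (which swaps $A\leftrightarrow A^*$, $E_i\leftrightarrow E^*_i$, $\th_i\leftrightarrow\th^*_i$, $\vphi_i\leftrightarrow\vphi_i$, $\tau_i\leftrightarrow\tau^*_i$, so that $T\mapsto T^*$ and $E_0\mapsto E^*_0$), exactly as \eqref{eq:TsE0} was obtained from \eqref{eq:TEs0} in Lemma \ref{lem:TEs0}. Then \eqref{eq:E0Td} is the $\dagger$-image of \eqref{eq:TE0}, using $A^{\dagger}=A$, $(A^*)^{\dagger}=A^*$, $E_i^{\dagger}=E_i$, $(E^*_i)^{\dagger}=E^*_i$ (Lemma \ref{lem:dagger2}), $T^{\dagger}$ as computed in Section \ref{sec:main}, and $(XY)^{\dagger}=Y^{\dagger}X^{\dagger}$; similarly \eqref{eq:Es0Tsd} is the $\dagger$-image of \eqref{eq:TsEs0}. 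The main obstacle I anticipate is the bookkeeping in the first step: correctly tracking which relative ($\Phi$, $\Phi^{\Downarrow}$, $\Phi^*$, or $\Phi^{\downarrow\Downarrow}$) of Lemma \ref{lem:tauiA} and of \eqref{eq:E0Es0E0} to invoke so that the $\eta$'s, $\tau$'s, and the endpoint idempotents match up, and making sure the surviving scalar is $\vphi_1\cdots\vphi_d/\tau_d(\th_d)$ rather than some near-miss involving $\eta_d(\th_0)$ or $\tau^*_d(\th^*_d)$ — the four formulas differ precisely in such denominators, so a misapplied symmetry would produce the wrong one.
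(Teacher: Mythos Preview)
Your plan for the last three identities is exactly right and matches the paper: \eqref{eq:TsEs0} is \eqref{eq:TE0} applied to $\Phi^*$, and \eqref{eq:E0Td}, \eqref{eq:Es0Tsd} are the $\dagger$-images of \eqref{eq:TE0}, \eqref{eq:TsEs0}.

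For \eqref{eq:TE0} itself, however, you are making this harder than it is and your ``slicker route'' has a genuine snag. Once you expand \emph{both} $E^*_0$ and $E_d$ as polynomials, the factors appear in the order $\eta_{d-i}(A)\,\eta^*_d(A^*)\,\tau_d(A)\,\tau^*_i(A^*)$, so $\eta_{d-i}(A)$ and $\tau_d(A)$ are separated by $\eta^*_d(A^*)$; since $A$ and $A^*$ do not commute, you cannot combine them as you propose, and Lemma~\ref{lem:tauiA} for $\Phi^{\Downarrow}$ does not match this shape either. The fix, and the paper's actual argument, is to expand \emph{only} $E_d = \tau_d(A)/\tau_d(\th_d)$ and leave $E^*_0$ alone. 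Then
\[
T E_0 = \frac{1}{\tau_d(\th_d)} \sum_{i=0}^d \eta_{d-i}(A)\,\bigl(E^*_0\,\tau_d(A)\,\tau^*_i(A^*)\,E_0\bigr),
\]
and the bracketed expression is precisely the left side of Lemma~\ref{lem:tauiA} for $\Phi$ itself, with first index $d$ and second index $i$. That lemma gives $\delta_{d,i}\,\vphi_1\cdots\vphi_d\,E^*_0 E_0$, so only the term $i=d$ survives, $\eta_0(A)=I$, and \eqref{eq:TE0} drops out immediately. No relative of $\Phi$ is needed, no resolution of the identity, and the bookkeeping worry you flag at the end disappears.
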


\begin{proof}
In \eqref{eq:T}, multiply each side on the right by $E_0$.
Simplify the result using $E_d=\tau_d(A) / \tau_d(\th_d)$ and \eqref{eq:Es0tauitausjE0} to get \eqref{eq:TE0}.
The line \eqref{eq:TsEs0} is obtained by applying \eqref{eq:TE0} to $\Phi^*$.
The lines \eqref{eq:E0Td} and \eqref{eq:Es0Tsd} are obtained by applying $\dagger$
to \eqref{eq:TE0} and \eqref{eq:TsEs0}, respectively.
\end{proof}

\section{The proof of Theorem \ref{thm:main}}
\label{sec:proofmain}

In this section we prove Theorem \ref{thm:main}.
Recall the Leonard system $\Phi$ from \eqref{eq:Phi2} and
the element $T$ from Definition \ref{def:T}.

\begin{lemma}     \label{lem:T2}     \samepage
\ifDRAFT {\rm lem:T2}. \fi
We have
\begin{equation}
T^2 = 
  (\nu^\Downarrow)^{-1} \phi_1 \cdots \phi_d
   \sum_{j=0}^d
  \frac{\eta_j(A) E^*_0 E_d \tau^*_j(A^*)}
         {\phi_d \cdots \phi_{d-j+1}}.                              \label{eq:T2}
\end{equation}
\end{lemma}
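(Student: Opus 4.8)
The plan is to compute $T^2$ directly from the definition \eqref{eq:T}, exploiting the factorizations already established in Section \ref{sec:formulaT}. Write
\[
  T^2 = \sum_{i=0}^d \eta_{d-i}(A)\,E^*_0 E_d\,\tau^*_i(A^*)\,T,
\]
and the key observation is that the factor $\tau^*_i(A^*)$ on the right of each summand can be absorbed: since $\tau^*_i(A^*)\in\gen{A^*}$, I would pull it leftward past nothing but instead work on the product $E^*_0 E_d\,\tau^*_i(A^*)\,T$. A cleaner route is to use \eqref{eq:TEs0}: we already know $T E^*_0 = c\, E_0 E^*_0$ with $c = \eta_d(\th_0)\vphi_1\cdots\vphi_d/(\tau_d(\th_d)\eta^*_d(\th^*_0))$. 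So first I would rewrite $T = \sum_i \eta_{d-i}(A)E^*_0 E_d\tau^*_i(A^*)$ and note $E_d\tau^*_i(A^*) = E_d\tau^*_i(A^*)E^*_0\cdot(\text{stuff})$ is not quite available; instead the efficient identity is that in \eqref{eq:T} the middle factor is $E^*_0 E_d$, so $T = E^*_0\bigl(\sum_i \cdots\bigr)$ after moving $\eta_{d-i}(A)$... that does not work either since $\eta_{d-i}(A)$ is on the left.

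The approach that will actually work: multiply $T$ on the \emph{left} by $T$ written so that its rightmost idempotent $E_d$ meets the leftmost part of the second $T$. Concretely, substitute the second factor $T = \sum_{j} \eta_{d-j}(A)E^*_0 E_d\tau^*_j(A^*)$ and use that $\tau^*_i(A^*)\eta_{d-j}(A)$ has no simple form, so instead reorganize: the product $E_d\,\tau^*_i(A^*)\,\eta_{d-j}(A)\,E^*_0$ appears, and I would apply Lemma \ref{lem:tauiA} (equation \eqref{eq:Es0tauitausjE0}) after expanding $\tau^*_i(A^*)$ and $\eta_{d-j}(A)$ in terms of the $\tau,\eta$ bases of Lemma \ref{lem:MMs}, together with $E_d = \tau_d(A)/\tau_d(\th_d)$ and $E_0 = \eta_d(A)/\eta_d(\th_0)$ from \eqref{eq:E0Ed}. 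The cleanest packaging: use \eqref{eq:TEs0} applied to suitable relatives. Since $\tau^*_i(A^*)E^*_0 = \delta_{i,0}E^*_0$, one should instead push the computation through $T^\dagger$ or $T^*$: note $T^2 = T\cdot T$ and by \eqref{eq:E0Ed} $E_d$ is a polynomial in $A$, so $E^*_0 E_d\tau^*_i(A^*)\cdot T = E^*_0 E_d\tau^*_i(A^*)\sum_j\eta_{d-j}(A)E^*_0 E_d\tau^*_j(A^*)$; now insert $E^*_0 E_d\tau^*_i(A^*)\eta_{d-j}(A)E^*_0 = \tau^*_i(\text{const})\cdots$—no. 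The genuinely correct manipulation is to use $\eqref{eq:TEs0}$ backwards: rewrite the inner $E^*_0 E_d\tau^*_j(A^*)$...

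Let me state the real plan without circular reasoning. Expand the product using the explicit polynomial form in the Note after Definition \ref{def:T}, or better: set $T = \sum_i \eta_{d-i}(A)\,M\,\tau^*_i(A^*)$ where $M = E^*_0 E_d$, and compute $T^2 = \sum_{i,j}\eta_{d-i}(A)\,M\,\tau^*_i(A^*)\,\eta_{d-j}(A)\,M\,\tau^*_j(A^*)$. The crux is the scalar $\mu_{i,j}$ defined by $M\,\tau^*_i(A^*)\,\eta_{d-j}(A)\,M = \mu_{i,j}\,M$, which exists because $M V$ is one-dimensional (as $E_d V$ is one-dimensional and $E^*_0 E_d\ne 0$ by the trace computations in Section \ref{sec:trace}). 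To evaluate $\mu_{i,j}$, expand $M = E^*_0 E_d = \tau_d(\th_d)^{-1}E^*_0\tau_d(A)$, so $M\tau^*_i(A^*)\eta_{d-j}(A)M = \tau_d(\th_d)^{-2}E^*_0\tau_d(A)\tau^*_i(A^*)\eta_{d-j}(A)E^*_0\tau_d(A)$; I would then express $\tau_d(A)\tau^*_i(A^*)$ appropriately and invoke Lemma \ref{lem:tauiA} (and its relatives under $*,\downarrow,\Downarrow$) to reduce each $E^*_0(\text{polynomial})E^*_0$ to a scalar multiple of $E^*_0$. Collecting: $T^2 = \sum_{i,j}\mu_{i,j}\,\eta_{d-i}(A)\,M\,\tau^*_j(A^*)$, and the combinatorial identity $\sum_i \mu_{i,j}\eta_{d-i}(A) = (\text{scalar}_j)\,\eta_j(A)$ (reindexing and using self-duality \eqref{eq:thths}, \eqref{eq:phi} to rewrite $\vphi$'s as $\phi$'s) yields \eqref{eq:T2}; the scalar prefactor assembles into $(\nu^\Downarrow)^{-1}\phi_1\cdots\phi_d/(\phi_d\cdots\phi_{d-j+1})$ via \eqref{eq:nuD}, \eqref{eq:nud}, and \eqref{eq:E0Es0E0}. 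The main obstacle is the bookkeeping in evaluating $\mu_{i,j}$ and performing the reindexing so that the $\eta_{d-i}(A)$ sum collapses to a single $\eta_j(A)$ with the stated denominator — this is where self-duality (which forces $\th_i = \th^*_i$ and the symmetry $\phi_i = \phi_{d-i+1}$) must be used, and getting the product of $\vphi$'s versus $\phi$'s exactly right, together with the $(\nu^\Downarrow)^{-1}$ factor from \eqref{eq:E0Es0E0} applied to $\Phi^\Downarrow$, is the delicate part.
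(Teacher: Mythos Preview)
Your eventual framework --- expand $T^2$ as the double sum
\[
  T^2 = \sum_{i,j} \eta_{d-i}(A)\,M\,\tau^*_i(A^*)\,\eta_{d-j}(A)\,M\,\tau^*_j(A^*),
  \qquad M = E^*_0 E_d,
\]
and analyze the inner block $M\,\tau^*_i(A^*)\,\eta_{d-j}(A)\,M$ --- is exactly the paper's approach. But there are two genuine problems in how you propose to finish.

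First, you treat the evaluation of $\mu_{i,j}$ as a bookkeeping exercise followed by a ``combinatorial identity'' $\sum_i \mu_{i,j}\eta_{d-i}(A) = (\text{scalar}_j)\,\eta_j(A)$. That identity cannot hold unless $\mu_{i,j}$ is supported on a single value of $i$, since $\{\eta_k(A)\}_{k=0}^d$ are linearly independent in $\gen{A}$. In fact the inner block collapses to a Kronecker delta in one stroke: apply Lemma~\ref{lem:tauiA} (equation~\eqref{eq:Es0tauitausjE0}) to the relative $\Phi^{\Downarrow *}$, which gives
\[
  E_d\,\tau^*_i(A^*)\,\eta_{d-j}(A)\,E^*_0 \;=\; \delta_{i,\,d-j}\,\phi_1\cdots\phi_{d-j}\,E_d E^*_0.
\]
This already forces $i = d-j$, so the double sum becomes a single sum over $j$ with $\eta_{d-i}(A) = \eta_j(A)$; then one application of $E^*_0 E_d E^*_0 = (\nu^\Downarrow)^{-1} E^*_0$ finishes the proof. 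Your roundabout plan (expand $E_d$ as $\tau_d(A)/\tau_d(\th_d)$, reduce $E^*_0(\text{poly})E^*_0$, etc.) is unnecessary and obscures the delta.

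Second, you invoke self-duality (equations~\eqref{eq:thths}, \eqref{eq:phi}) to ``rewrite $\vphi$'s as $\phi$'s'' and carry out the reindexing. But Lemma~\ref{lem:T2} does not assume $\Phi$ is self-dual; the identity \eqref{eq:T2} holds for every Leonard system. The $\phi_i$'s appear because the first split sequence of $\Phi^{\Downarrow *}$ is $\{\phi_i\}_{i=1}^d$, not because of any self-dual symmetry. Self-duality only enters in the next step (Proposition~\ref{prop:T2}), where the symmetry $\phi_i = \phi_{d-i+1}$ is used to identify the sum in \eqref{eq:T2} with $\sum_j F_j^\Downarrow/\nu^\Downarrow = (\nu^\Downarrow)^{-1} I$.
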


\begin{proof}
By \eqref{eq:T}, 
\begin{equation}     
T^2=  \sum_{i=0}^d \sum_{j=0}^d 
    \eta_{d-i}(A) E^*_0 E_d \tau^*_i(A^*)
    \eta_{d-j}(A) E^*_0 E_d \tau^*_j(A^*).     \label{eq:aux1}
\end{equation}
Applying \eqref{eq:Es0tauitausjE0} to $\Phi^{\Downarrow *}$,
\[
   E_d \tau^*_i(A^*) \eta_j (A) E^*_0 = \delta_{i,j} \phi_1 \cdots \phi_i E_d E^*_0
             \qquad\qquad (0 \leq i,j \leq d).
\]
In this line, replace $j$ with $d-j$ to get
\[
  E_d \tau^*_i (A^*) \eta_{d-j}(A) E^*_0
  = \delta_{i, d-j} \phi_1 \cdots \phi_{d-j} E_d E^*_0 \qquad\qquad  (0 \leq i,j \leq d).
\]
By this and \eqref{eq:aux1},
\begin{equation}
T^2 = \sum_{j=0}^d \phi_1 \cdots \phi_{d-j}
          \eta_j(A) E^*_0 E_d E^*_0 E_d \tau^*_j (A^*).    \label{eq:aux2}
\end{equation}
Applying \eqref{eq:E0Es0E0} to $\Phi^\Downarrow$,
\[
  E^*_0 E_d E^*_0 = (\nu^\Downarrow)^{-1} E^*_0.
\]
By this and \eqref{eq:aux2} we get \eqref{eq:T2}.
\end{proof}

\begin{proposition}   \label{prop:T2}    \samepage
\ifDRAFT {\rm prop:T2}. \fi
Assume that $\Phi$ is self-dual,
Then $T$ is invertible.
Moreover, $T^2 = \lambda I$, 
where
\[
   \lambda = (\nu^\Downarrow)^{-2} \phi_1 \cdots \phi_d.
\]
\end{proposition}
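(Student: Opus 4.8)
The plan is to start from the explicit formula for $T^2$ in Lemma~\ref{lem:T2} and show that under self-duality the right-hand side collapses to a scalar multiple of $I$. First I would invoke Lemma~\ref{lem:selfdualparam}: since $\Phi$ is self-dual we have $\th_i = \th^*_i$ for $0 \leq i \leq d$, hence $\eta_j(A)$ and $\eta^*_j(A^*)$ (resp.\ $\tau^*_j(A^*)$ and $\tau_j(A^*)$) are ``the same'' polynomials evaluated at $A$ and $A^*$ respectively; moreover $\phi_i = \phi_{d-i+1}$, so $\phi_d \cdots \phi_{d-j+1} = \phi_1 \cdots \phi_j$. Using the latter, the sum in \eqref{eq:T2} becomes
\[
   \sum_{j=0}^d \frac{\eta_j(A) E^*_0 E_d \tau^*_j(A^*)}{\phi_1 \cdots \phi_j}.
\]

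The key step is to recognize this sum. Comparing with Lemma~\ref{lem:Fi}, the projector $F_i$ onto the $\Phi$-split component $U_i$ is $\nu \tau_i(A) E^*_0 E_0 \tau^*_i(A^*) / (\vphi_1 \cdots \vphi_i)$. Applying the $D_4$-symmetry, specifically reading Lemma~\ref{lem:Fi} for the relative $\Phi^\Downarrow$ (for which the roles of $\th_0$ and $\th_d$, hence $\tau$ and $\eta$, and of $\vphi$ and $\phi$, are interchanged according to Lemma~\ref{lem:Phis}(iii)), I expect the $\Phi^\Downarrow$-split projectors to be proportional to $\eta_j(A) E^*_0 E_d \tau^*_j(A^*) / (\phi_1 \cdots \phi_j)$ with constant of proportionality $\nu^\Downarrow$. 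Concretely, I would check $F_j^\Downarrow = \nu^\Downarrow \eta_j(A)E^*_0E_d\tau^*_j(A^*)/(\phi_1\cdots\phi_j)$, so that the displayed sum equals $(\nu^\Downarrow)^{-1}\sum_{j=0}^d F_j^\Downarrow = (\nu^\Downarrow)^{-1} I$, using $\sum_j F_j^\Downarrow = I$. Substituting back into \eqref{eq:T2} gives
\[
   T^2 = (\nu^\Downarrow)^{-1}\phi_1\cdots\phi_d \cdot (\nu^\Downarrow)^{-1} I
       = (\nu^\Downarrow)^{-2}\phi_1\cdots\phi_d\, I = \lambda I,
\]
as claimed. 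Invertibility of $T$ is then immediate since $\lambda \neq 0$: indeed $\nu^\Downarrow \neq 0$ from \eqref{eq:nuD} (it is a quotient of products of the nonzero scalars $\tau_d(\th_d)$, $\eta^*_d(\th^*_0)$, $\vphi_i$), and $\phi_1\cdots\phi_d \neq 0$ since the split sequence entries are nonzero; hence $\lambda^{-1}T$ is a two-sided inverse of $T$.

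The main obstacle I anticipate is pinning down the precise application of Definition~\ref{def:fg} and Lemma~\ref{lem:Phis}(iii) to transport Lemma~\ref{lem:Fi} to $\Phi^\Downarrow$: one must carefully track which polynomials and split-sequence scalars get swapped, and confirm that the $\Phi^\Downarrow$-split decomposition has projectors of exactly the stated form (in particular that $U_i^\Downarrow = \eta_i(A)U_d$ and that $E^*_0E_d$ rather than $E^*_0E_0$ appears, which follows since for $\Phi^\Downarrow$ the idempotent ``$E_0$'' is our $E_d$). Once that bookkeeping is done correctly, the collapse of the sum is automatic. A secondary point worth stating explicitly is that self-duality is used only through $\phi_i = \phi_{d-i+1}$ (to rewrite the denominators); the identification of the sum with $\sum F_j^\Downarrow$ does not itself require self-duality, but without it the denominators $\phi_d\cdots\phi_{d-j+1}$ would not match the $\phi_1\cdots\phi_j$ appearing in $F_j^\Downarrow$.
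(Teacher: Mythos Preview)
Your proposal is correct and follows essentially the same route as the paper: use Lemma~\ref{lem:selfdualparam} to rewrite the denominators in \eqref{eq:T2} as $\phi_1\cdots\phi_j$, recognize the resulting sum as $(\nu^\Downarrow)^{-1}\sum_j F_j^\Downarrow = (\nu^\Downarrow)^{-1}I$ via Lemma~\ref{lem:Fi} applied to $\Phi^\Downarrow$, and conclude $T^2=\lambda I$ with $\lambda\neq 0$. Your observation that only the identity $\phi_i=\phi_{d-i+1}$ (not $\th_i=\th^*_i$) is actually needed here is accurate and matches what the paper uses.
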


\begin{proof}
By Lemma \ref{lem:selfdualparam} the sum in \eqref{eq:T2} is equal to
\[
 \sum_{j=0}^d
 \frac{\eta_j(A) E^*_0 E_d \tau^*_j(A^*)}
         {\phi_1 \cdots \phi_{j}}. 
\]
Applying \eqref{eq:Fi} to $\Phi^\Downarrow$ and using $I=\sum_{j=0}^d F_j^\Downarrow$,
we find that the above sum is equal to $(\nu^\Downarrow)^{-1} I$.
Thus $T^2 = \lambda I$.
By construction $\lambda \neq 0$ so $T$ is invertible.
\end{proof}

\begin{lemma}    \label{lem:ATTAs}    \samepage
\ifDRAFT {\rm lem:ATTAs}. \fi
Assume that $\Phi$ is self-dual.
Then
\begin{align}
 A T &= T A^*,                               
&
 A^* T &= T A.           \label{eq:ATTAs}
\end{align}
\end{lemma}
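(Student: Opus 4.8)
The plan is to establish the two relations $AT = TA^*$ and $A^*T = TA$ by a direct computation with the defining formula \eqref{eq:T} for $T$, using the action of $A$ and $A^*$ on the split decomposition $\{U_i\}_{i=0}^d$ together with self-duality. Recall from Section~\ref{sec:parray} that $(A-\th_iI)U_i = U_{i+1}$, $(A^*-\th^*_iI)U_i = U_{i-1}$, that $\tau_i(A)U_0 = U_i$, and that $\eta^*_i(A^*)U_d = U_{d-i}$. Since $\Phi$ is self-dual, Lemma~\ref{lem:selfdualparam} gives $\th_i = \th^*_i$ for $0 \le i \le d$, so the four families of polynomials $\tau_i, \eta_i, \tau^*_i, \eta^*_i$ collapse to two families: $\tau^*_i = \tau_i$ and $\eta^*_i = \eta_i$ as polynomials in $\F[x]$. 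This is what makes the asymmetric-looking formula for $T$ behave symmetrically.

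First I would note that it suffices to prove the first relation $AT = TA^*$: applying the antiautomorphism $\dagger$ (which fixes $A$ and $A^*$) to $AT = TA^*$ gives $T^\dagger A = A^* T^\dagger$, and by Theorem~\ref{thm:main} (or rather, to avoid circularity, by the part of its proof already available, namely that $T = T^\dagger$ once we know $T$ gives an automorphism --- here it is cleaner to simply prove both relations in parallel, or to derive the second from the first via the identity $T^2 = \lambda I$ from Proposition~\ref{prop:T2}). Indeed, from $AT = TA^*$ and $T^2 = \lambda I$ we get $TA = T A T T^{-1}\lambda^{-1}\lambda = \dots$; more directly, multiply $AT = TA^*$ on left and right by $T$ and use $T^2 = \lambda I$: $TA T = \lambda A^*$, so $T A = \lambda A^* T^{-1} = \lambda A^* (\lambda^{-1} T) = A^* T$, which is the second relation. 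So the whole statement reduces to the single identity $AT = TA^*$.

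To prove $AT = TA^*$, I would use the $\Phi$-split basis $\{u_i\}_{i=0}^d$ with $u_i \in U_i$ and compute how both sides act on a basis of $V$ adapted to the split decomposition, or work abstractly with the idempotents $F_i$ onto $U_i$. The key structural facts are: $E^*_0 E_d$ maps $V$ onto (a scalar multiple of) the line $U_0 = E^*_0 V$ via the bottom split space, and the polynomial pieces $\eta_{d-i}(A)$ and $\tau^*_i(A^*) = \tau_i(A^*)$ shift along the split decomposition. Concretely, one shows $T U_j \subseteq U_j$ using that $\tau^*_j(A^*) = \tau_j(A^*)$ kills $U_j$ down through $U_0$ picking out the $i$-summand appropriately, $E^*_0 E_d$ lands in $U_0$, and $\eta_{d-i}(A)$ pushes $U_0$ up to $U_{i}$ --- so $T$ acts on each $U_j$ as a scalar, and one computes that scalar. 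Then $AT$ and $TA^*$ are compared as maps $U_j \to V$, using $(A - \th_jI)U_j = U_{j+1}$ versus $(A^* - \th^*_jI)U_j = U_{j-1}$ and self-duality $\th_j = \th^*_j$ to match them up.

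The main obstacle I anticipate is the bookkeeping in the explicit scalar computation: one must track the normalizing constants coming from $E^*_0 E_d E^*_0 = (\nu^\Downarrow)^{-1}E^*_0$, the products $\vphi_1\cdots\vphi_i$ and $\phi_1\cdots\phi_j$ appearing in Lemmas~\ref{lem:tauiA} and \ref{lem:Fi}, and the symmetry $\phi_i = \phi_{d-i+1}$ from \eqref{eq:phi}, and verify that the telescoping across the shift by $A$ versus $A^*$ produces exactly the same coefficients on both sides. An alternative, possibly slicker route that avoids heavy computation: since by Proposition~\ref{prop:T2} $T$ is invertible with $T^2 = \lambda I$, the map $X \mapsto TXT^{-1}$ is an automorphism $\sigma$ of $\text{\rm End}(V)$ of order dividing $2$; one then checks $A^\sigma = A^*$ directly from the product formulas in Lemmas~\ref{lem:TEs0}, \ref{lem:TE0} (which already compute $TE^*_0$, $TE_0$ in terms of $E_0E^*_0$, $E^*_0E_0$), extends via the fact that $A$ is a polynomial in $E_0$ together with its relatives under $\downarrow, \Downarrow$, and concludes $A^*{}^\sigma = (A^\sigma)^{\sigma} = A$ from $\sigma^2 = 1$, giving $AT = A^\sigma T = T A^\sigma{}^{\sigma^{-1}}\cdots$ --- i.e. $TA^* = T A^\sigma = A^{\sigma\sigma}T = \dots$ hmm, more carefully: $A^\sigma = A^*$ means $TAT^{-1} = A^*$, i.e. $TA = A^*T$, and applying $\sigma$ again (or using $\sigma^2=1$) gives $TA^* = AT$. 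I would present the computational proof as primary but keep this idempotent-based shortcut in mind as a cross-check.
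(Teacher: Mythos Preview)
Your reduction of the second relation to the first via $T^2 = \lambda I$ (Proposition~\ref{prop:T2}) is correct and is exactly what the paper does. The problem lies in your primary argument for $AT = TA^*$.

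The claim that $T U_j \subseteq U_j$ for the $\Phi$-split decomposition $\{U_j\}_{j=0}^d = [0^*D]$ is false. In fact, once the lemma is proved and $T$ is known to give the duality (so $T E_i T^{-1} = E^*_i$ and $T E^*_i T^{-1} = E_i$), one reads off immediately that $T$ sends $[0^*D]$ to the \emph{different} decomposition $[0D^*]$ (this is recorded later as Proposition~\ref{prop:Tdecomp}); equivalently, Theorem~\ref{thm:matrixT} shows that with respect to the natural bases attached to $[0^*0]$, $[D^*D]$, etc., the matrix of $T$ is anti-diagonal, not diagonal. So the scalar-on-each-$U_j$ picture does not hold, and the comparison of $AT$ and $TA^*$ as maps $U_j \to V$ you sketch cannot be set up in this form. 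Your ``slicker route'' at the end is also not viable as stated: knowing $TE_0$ and $TE^*_0$ does not determine $TAT^{-1}$, and $A$ is not recoverable from $E_0$ and its $\downarrow,\Downarrow$-relatives alone.

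The paper avoids the split decomposition entirely for this step. Write $T = \sum_{i=0}^d T_i$ with $T_i = \eta_{d-i}(A) E^*_0 E_d \tau^*_i(A^*)$, and observe the simple polynomial identities $(x-\th_{i-1})\eta_{d-i}(x) = \eta_{d-i+1}(x)$ and $(x-\th^*_{i-1})\tau^*_{i-1}(x) = \tau^*_i(x)$, together with $\eta_{d+1}(A)=0$ and $\tau^*_{d+1}(A^*)=0$. These give the telescoping relations $A T_i - \th_i T_i = T_{i-1}A^* - \th^*_{i-1}T_{i-1}$ (with the obvious boundary cases $i=0$ and $i=d$). Summing over $i$ yields $AT - \sum_i \th_i T_i = TA^* - \sum_i \th^*_i T_i$, and self-duality $\th_i = \th^*_i$ finishes it. No split-basis bookkeeping, no products of $\vphi$'s or $\phi$'s, are needed.
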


\begin{proof}
We first show $AT=TA^*$.
For $0 \leq i \leq d$ define
\[
   T_i = \eta_{d-i}(A) E^*_0 E_d \tau^*_i(A^*).
\]
The element $T_i$ is the $i$-summand in \eqref{eq:T}, so $T = \sum_{i=0}^d T_i$. 
By Definition \ref{def:tau} along with
$\prod_{\ell=0}^d (A-\th_\ell I) = 0$ and
$\prod_{\ell=0}^d (A^* - \th^*_\ell I) =0$,
\begin{align*}
 A T_0 - \th_0 T_0 &= 0,
\\
 A T_i - \th_i T_i &= T_{i-1} A^* - \th^*_{i-1} T_{i-1}    \qquad\qquad(1 \leq i \leq d),
\\
 0 &= T_d A^* - \th^*_d T_d.
\end{align*}
By these comments
\[
   A T - \sum_{i=0}^d \th_i T_i = T A^* - \sum_{i=0}^d \th^*_i T_i.
\]
By this and \eqref{eq:thths} we see that $A T = T A^*$.
In this equation, multiply each side on the left and right by $T$.
Simplify the result using Proposition \ref{prop:T2} to get $A^* T = T A$.
\end{proof}

\begin{corollary}    \label{cor:EiTTEis}   \samepage
\ifDRAFT {\rm cor:EiTTEis}. \fi
Assume that $\Phi$ is self-dual.
Then
\begin{align}
  E_i T &= T E^*_i, &  E^*_i T &= T E_i && (0 \leq i \leq d).     \label{eq:EiT}
\end{align}
\end{corollary}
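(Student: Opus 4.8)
\textbf{Proof plan for Corollary \ref{cor:EiTTEis}.}
The plan is to derive the equations $E_i T = T E^*_i$ and $E^*_i T = T E_i$ as a formal consequence of the intertwining relations $AT = TA^*$ and $A^*T = TA$ established in Lemma \ref{lem:ATTAs}. The key observation is that by \eqref{eq:thths} (which holds since $\Phi$ is self-dual) the eigenvalue sequence $\{\th_i\}_{i=0}^d$ of $A$ coincides with the dual eigenvalue sequence $\{\th^*_i\}_{i=0}^d$ of $A^*$, so the spectral projector $E_i$ of $A$ for $\th_i$ and the spectral projector $E^*_i$ of $A^*$ for $\th^*_i = \th_i$ are built from the ``same'' interpolation data.

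Concretely, first I would recall from \eqref{eq:Ei} that
\[
  E_i = \prod_{\substack{0 \leq j \leq d \\ j \neq i}} \frac{A - \th_j I}{\th_i - \th_j},
  \qquad\qquad
  E^*_i = \prod_{\substack{0 \leq j \leq d \\ j \neq i}} \frac{A^* - \th^*_j I}{\th^*_i - \th^*_j}
        = \prod_{\substack{0 \leq j \leq d \\ j \neq i}} \frac{A^* - \th_j I}{\th_i - \th_j},
\]
the last equality by \eqref{eq:thths}. Thus if $p(x) = \prod_{j \neq i}(x-\th_j)/(\th_i-\th_j) \in \F[x]$ denotes the $i$-th Lagrange interpolation polynomial for the nodes $\{\th_j\}_{j=0}^d$, then $E_i = p(A)$ and $E^*_i = p(A^*)$. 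Now from $AT = TA^*$ one gets $A^2 T = A T A^* = T (A^*)^2$ by applying the relation twice, and inductively $A^n T = T (A^*)^n$ for all $n \geq 0$; hence $q(A)\,T = T\,q(A^*)$ for every polynomial $q \in \F[x]$. Taking $q = p$ gives $E_i T = p(A) T = T p(A^*) = T E^*_i$, which is the first equation. For the second equation $E^*_i T = T E_i$, I would argue identically starting from the companion relation $A^*T = TA$ of Lemma \ref{lem:ATTAs}, obtaining $q(A^*) T = T q(A)$ for all $q \in \F[x]$ and specializing $q = p$.

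There is no real obstacle here; the only point requiring care is that the Lagrange polynomial $p$ used to express $E_i$ and $E^*_i$ must be literally the same polynomial, which is exactly what the self-duality condition \eqref{eq:thths} guarantees. Without \eqref{eq:thths} the projectors $E_i$ and $E^*_i$ would be given by different polynomials and the argument would fail. One could alternatively phrase the proof slightly more abstractly: conjugation by $T$ is an algebra automorphism (since $T$ is invertible by Proposition \ref{prop:T2}), and by \eqref{eq:ATTAs} it sends $A \mapsto A^*$ and $A^* \mapsto A$; since $E_i$ lies in $\gen{A}$ and $E^*_i$ lies in $\gen{A^*}$ with the same defining polynomial relative to $A$ resp.\ $A^*$, the automorphism sends $E_i \mapsto E^*_i$ and $E^*_i \mapsto E_i$, which is precisely \eqref{eq:EiT}. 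I would present the short polynomial-substitution version as the cleanest route.
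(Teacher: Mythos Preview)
Your proposal is correct and follows exactly the route the paper takes: the paper's proof is the single line ``By \eqref{eq:Ei} and \eqref{eq:ATTAs},'' and you have simply unpacked what that citation means, including the (implicit) use of \eqref{eq:thths} to ensure the Lagrange polynomials for $E_i$ and $E^*_i$ coincide.
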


\begin{proof}
By \eqref{eq:Ei} and \eqref{eq:ATTAs}.
\end{proof}

\begin{proofof}{Theorem \ref{thm:main}}
By Proposition \ref{prop:T2}, $T$ is invertible.
By \eqref{eq:ATTAs} and \eqref{eq:EiT},
$T$ gives the duality $\Phi \leftrightarrow \Phi^*$.

Next we show that $T=T^*$.
In the above statement, we replace $T$ by $T^*$ and swap the roles of $\Phi$, $\Phi^*$
to see that $T^*$ gives the duality $\Phi \leftrightarrow \Phi^*$.
Thus each of $T$ and $T^*$ gives the duality $\Phi  \leftrightarrow \Phi^*$.
By this and the comment above Definition \ref{def:selfdual},
there exists $0 \neq \zeta \in \F$ such that $T^* = \zeta T$.
We show that $\zeta = 1$.
By \eqref{eq:TsE0}, \eqref{eq:TE0} together with \eqref{eq:thths}
we find that $T^* E_0$ and $T E_0$ have the same trace.
This trace is nonzero by the comments above \eqref{eq:defnu}.
Thus $\zeta = 1$ and so $T=T^*$.

Next we show that $T=T^\dagger$.
In \eqref{eq:ATTAs} and \eqref{eq:EiT}, apply $\dagger$ to each side
and use Lemma \ref{lem:dagger2} to find that $T^\dagger$ gives the duality $\Phi \leftrightarrow \Phi^*$.
Thus each of $T$ and $T^\dagger$ gives the duality $\Phi  \leftrightarrow \Phi^*$.
By this and the comment above Definition \ref{def:selfdual},
there exists $0 \neq \zeta' \in \F$ such that $T^\dagger = \zeta' T$.
We show that $\zeta' = 1$.
By  \eqref{eq:Es0Td}, \eqref{eq:TsEs0} together with \eqref{eq:thths} and $T=T^*$,
we find that $E^*_0 T^\dagger$ and $E^*_0 T$ have the same trace.
This trace is nonzero by the comments above \eqref{eq:defnu}.
Thus $\zeta' = 1$ and so $T=T^\dagger$.

In the equation $T = T^*$, apply $\dagger$ to each side to get $T^\dagger = (T^*)^\dagger$.
We have shown that the elements $T$, $T^*$, $T^\dagger$, $(T^*)^\dagger$ are equal.
\end{proofof}

\section{Some decompositions and flags associated with a Leonard system}
\label{sec:decomp}

Consider the Leonard system $\Phi$ from \eqref{eq:Phi2}.
Recall from Section 1 the notion of a flag on $V$,
and what it means for two flags on $V$ to be opposite.
In this section we use $\Phi$ to obtain four mutually
opposite flags on $V$;
these are induced by the eigenspace decompositions of $A$ and $A^*$,
as well as the split decomposition for $\Phi$ and its relatives.
In the next section, we will describe how $T$ acts on these flags
and decompositions.

\begin{definition}          \label{def:Omega}  \samepage
For notational convenience let
$\Omega$ denote the set consisting of four symbols
$0,D,0^*,D^*$.
\end{definition}

\begin{definition}          \label{def:flags}   \samepage
\ifDRAFT {\rm def:flags}. \fi
For $z \in \Omega$ we define a flag on $V$ which we denote by $[z]$.
To define this flag we display the $i^\text{th}$ component for $0 \leq i \leq d$.
\[
\begin{array}{c|c}
    z & \text{$i^\text{th}$ component of $[z]$}  \\
  \hline
    0 & E_0V+E_1V+\cdots+E_iV                          \rule{0mm}{2.7ex}     \\
    D & E_dV+E_{d-1}V+\cdots+E_{d-i}V                \rule{0mm}{2.5ex}   \\
   0^* & E^*_0V+E^*_1V+\cdots+E^*_iV               \rule{0mm}{2.5ex}  \\
   D^* & E^*_dV+ E^*_{d-1}V+\cdots+E^*_{d-i}V    \rule{0mm}{2.5ex}  
\end{array}
\]
\end{definition}

\begin{lemma} \cite[Theorem 7.3]{T:24points}
\label{lem:opposite} \samepage
\ifDRAFT {\rm lem:opposite}. \fi
The four flags in Definition \ref{def:flags} are mutually opposite.
\end{lemma}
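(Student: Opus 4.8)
\textbf{Proof plan for Lemma~\ref{lem:opposite}.} The statement is cited from \cite[Theorem 7.3]{T:24points}, so strictly speaking one may simply invoke that reference; but here is how I would reconstruct the argument internally, using only the machinery developed above. The plan is to show that for any ordered pair of distinct symbols $z,w$ among $0,D,0^*,D^*$, the flags $[z]$ and $[w]$ are opposite, i.e.\ there is a decomposition $\{V_i\}_{i=0}^d$ of $V$ that induces $[z]$ and whose reversal $\{V_{d-i}\}_{i=0}^d$ induces $[w]$. By the discussion in Section~\ref{sec:intro}, this is equivalent to checking that $H_i(z)\cap H_{d-i}(w)$ is one-dimensional for $0\le i\le d$, where $H_i(z)$ denotes the $i^{\text{th}}$ component of $[z]$; equivalently, that $H_i(z)\oplus H_{d-1-i}(w)=V$ for $0\le i\le d-1$ (together with the trivial $H_d(z)=V$ case), since a dimension count then forces the intersection to be a line.

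First I would dispose of the ``same letter'' pairs: $[0]$ versus $[D]$ is opposite because $\{E_iV\}_{i=0}^d$ is a decomposition inducing $[0]$ while $\{E_{d-i}V\}_{i=0}^d$ induces $[D]$ by Definition~\ref{def:flags}; likewise $[0^*]$ versus $[D^*]$ via $\{E^*_iV\}_{i=0}^d$. The substantive content is the six ``mixed'' pairs involving one starred and one unstarred symbol, and by applying the $D_4$-action (the maps $*$, $\downarrow$, $\Downarrow$ from Section~\ref{sec:LS}, which permute the four flags) it suffices to treat a single representative, say $[0]$ versus $[0^*]$; the others follow by relabeling $\Phi$ with a relative. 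For $[0^*]$ against $[0]$ the key input is the split decomposition: by \eqref{eq:Ui}, $U_i=(E^*_0V+\cdots+E^*_iV)\cap(E_iV+\cdots+E_dV)$, and since $\{U_i\}_{i=0}^d$ is a decomposition of $V$ we get $(E^*_0V+\cdots+E^*_iV)\oplus(E_{i+1}V+\cdots+E_dV)=V$ for $0\le i\le d-1$. Now $E^*_0V+\cdots+E^*_iV$ is exactly $H_i(0^*)$, and $E_{i+1}V+\cdots+E_dV$ is exactly $H_{d-1-i}(D)$; so in fact the split decomposition directly witnesses that $[0^*]$ and $[D]$ are opposite. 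The pair $[0^*]$ versus $[0]$ is then handled by replacing $\Phi$ with $\Phi^{\Downarrow}$, whose split decomposition reverses the roles of $E_0$ and $E_d$. Running through the $D_4$-orbit, all six mixed pairs are covered.

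The main obstacle is bookkeeping rather than mathematics: one must verify that the $D_4$-action on Leonard systems actually permutes the four flags $[0],[D],[0^*],[D^*]$ in the expected way (for instance that applying $*$ swaps $\{[0],[D]\}$ with $\{[0^*],[D^*]\}$, and applying $\Downarrow$ swaps $[0]\leftrightarrow[D]$ while fixing $[0^*],[D^*]$), and then confirm that each of the six mixed pairs lies in the orbit of the single pair $\{[0^*],[D]\}$ handled by the split decomposition. Once that orbit computation is laid out in a small table, ``mutually opposite'' reduces to the observation already recorded above the statement: $V_i=H_i(z)\cap H'_{d-i}(w)$ determines the inducing decomposition uniquely, so each pair being opposite is all that needs checking. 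I would therefore organize the proof as: (1) recall the ``opposite $\iff$ complementary-at-each-step'' criterion; (2) note the two same-letter pairs are immediate from Definition~\ref{def:flags}; (3) observe from \eqref{eq:Ui} that the $\Phi$-split decomposition shows $[0^*]$ and $[D]$ are opposite; (4) apply the $D_4$-action to propagate this to all remaining pairs.
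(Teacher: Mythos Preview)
The paper does not supply its own proof of this lemma; it simply records the citation to \cite[Theorem~7.3]{T:24points} and moves on. So there is nothing to compare against, and your instinct that ``strictly speaking one may simply invoke that reference'' matches exactly what the authors do.

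Your reconstruction is nonetheless correct in outline and is essentially the argument one finds in \cite{T:24points}. Two small points to tighten if you ever write it out in full. First, there are four ``mixed'' (one starred, one unstarred) unordered pairs, not six; together with the two same-letter pairs this gives the $\binom{4}{2}=6$ total pairs, and your Klein-four orbit $\{1,\downarrow,\Downarrow,\downarrow\Downarrow\}$ of $\{[0^*],[D]\}$ already hits all four mixed ones. Second, the inference ``since $\{U_i\}_{i=0}^d$ is a decomposition of $V$ we get $(E^*_0V+\cdots+E^*_iV)\oplus(E_{i+1}V+\cdots+E_dV)=V$'' uses more than just that the $U_i$ are a decomposition: you are implicitly invoking the identities $U_0+\cdots+U_i = E^*_0V+\cdots+E^*_iV$ and $U_i+\cdots+U_d = E_iV+\cdots+E_dV$, which are part of the standard split-decomposition package (see e.g.\ \cite[Theorem~20.7]{T:survey}) but do not follow from \eqref{eq:Ui} and one-dimensionality alone. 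Once those are cited, the split decomposition $\{U_i\}_{i=0}^d$ directly witnesses that $[0^*]$ and $[D]$ are opposite, and the $D_4$-bookkeeping you describe finishes the job.
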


\begin{definition}          \label{def:decompositions}   \samepage
\ifDRAFT {\rm def:decompositions}. \fi
Let $z,w$ denote an ordered pair of distinct elements of $\Omega$.
By Lemma \ref{lem:opposite} the flags $[z]$, $[w]$ are opposite.
Let $[zw]$ denote the decomposition of $V$ induced by $[z]$, $[w]$.
\end{definition}

Let $\{V_i\}_{i=0}^d$ denote a decomposition of $V$.
By the {\em inversion} of this decomposition we mean
the decomposition $\{V_{d-i}\}_{i=0}^d$.
By \cite[Lemma 8.6]{NT:switch}
the decompositions in Definition \ref{def:decompositions}
have the following features. 
For distinct $z$, $w \in \Omega$ we have
(i) 
the decomposition $[zw]$ is the inversion of $[wz]$;
(ii) 
for $0 \leq i \leq d$ the $i^\text{th}$ component of $[zw]$ is the intersection
of the $i^\text{th}$  component of $[z]$ and the $(d-i)^\text{th}$ component of $[w]$;
(iii) 
the decomposition $[zw]$ induces $[z]$ and the inversion of $[zw]$ induces $[w]$.

\begin{example}   \label{exam:decompositions}  \samepage
\ifDRAFT {\rm exam:decompositions}. \fi
We display some of the decompositions from 
Definition \ref{def:decompositions}.
For each decomposition in the table below we give
the $i^\text{th}$ component for $0 \leq i \leq d$.
\[
\begin{array}{c|c}
 \text{decomposition} & \text{$i^\text{th}$ component}  \\
\hline
  \;\;\; [0^*D] \;\;\; & 
   \;\;\; (E^*_0V+\cdots+E^*_iV)\cap(E_iV+\cdots+E_dV)     \rule{0mm}{2.7ex}      \\
  {[D^*D]} & 
    (E^*_dV+\cdots+E^*_{d-i}V)\cap(E_iV+\cdots+E_dV)       \rule{0mm}{2.5ex}    \\
  {[0^*0]}  &
     (E^*_0V+\cdots+E^*_{i}V)\cap(E_{d-i}V+\cdots+E_{0}V)       \rule{0mm}{2.5ex}  \\
  {[D^*0]}  & 
     (E^*_dV+\cdots+E^*_{d-i}V)\cap(E_{d-i}V+\cdots+E_{0}V)       \rule{0mm}{2.5ex}   \\
  {[0D]}  &
       E_iV \\
  {[0^*D^*]} &
       E^*_iV 
\end{array}
\]
\end{example}

\section{The action of $T$ on the flags and decompositions}
\label{sec:actTflag}

Recall the Leonard system $\Phi$ from \eqref{eq:Phi2}
and the element $T$  from Definition \ref{def:T}.
In this section we describe how $T$ acts on the flags from Definition \ref{def:flags}
and the decompositions from Definition \ref{def:decompositions}.

\begin{lemma}    \label{lem:TEiV}    \samepage
\ifDRAFT {\rm lem:TEiV}. \fi
Assume that $\Phi$ is self-dual.
Then 
\[
  T E_i V = E^*_i V,  \qquad\qquad T E^*_i V = E_i V  \qquad\qquad (0 \leq i \leq d).
\]
\end{lemma}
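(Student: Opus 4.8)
The plan is to deduce this directly from Corollary~\ref{cor:EiTTEis}. That corollary gives, under the self-duality assumption, the operator identities $E_i T = T E^*_i$ and $E^*_i T = T E_i$ for $0 \leq i \leq d$. These are exactly the ingredients needed, once combined with the invertibility of $T$ from Proposition~\ref{prop:T2}.

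First I would apply both sides of the identity $T E^*_i = E_i T$ to $V$. On the right-hand side we get $E_i T V$; since $T$ is invertible, $TV = V$, so $E_i T V = E_i V$. On the left-hand side we get $T E^*_i V$. Hence $T E^*_i V = E_i V$ for $0 \leq i \leq d$. Symmetrically, applying $T E_i = E^*_i T$ to $V$ and using $TV = V$ yields $T E_i V = E^*_i V$ for $0 \leq i \leq d$. That establishes both claimed equalities.

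Alternatively, and perhaps cleaner to present, one can verify just one of the two inclusions together with a dimension count. For instance, $T E_i V \subseteq E^*_i V$ follows from $E^*_i T = T E_i$: if $v \in E_i V$ then $E_i v = v$, so $T v = T E_i v = E^*_i T v \in E^*_i V$. Since $T$ is injective, $\dim T E_i V = \dim E_i V = 1 = \dim E^*_i V$, forcing $T E_i V = E^*_i V$. The same argument with the roles of $E_i$, $E^*_i$ interchanged (using $E_i T = T E^*_i$) gives $T E^*_i V = E_i V$.

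There is no real obstacle here; the lemma is an immediate consequence of results already proved, and the only points to be careful about are invoking the self-duality hypothesis (so that Corollary~\ref{cor:EiTTEis} applies) and invoking invertibility of $T$ (Proposition~\ref{prop:T2}) to pass from operator identities to statements about images. One could even note that the two displayed equations are equivalent to each other via Proposition~\ref{prop:T2}: multiplying $T E_i V = E^*_i V$ on the left by $T$ and using $T^2 = \lambda I$ with $\lambda \neq 0$ recovers $E_i V = T E^*_i V$.
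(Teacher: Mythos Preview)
Your proposal is correct and matches the paper's own proof essentially verbatim: the paper also invokes \eqref{eq:EiT} (i.e., Corollary~\ref{cor:EiTTEis}) to write $T E_i V = E^*_i T V$, then uses the invertibility of $T$ to conclude $TV = V$ and hence $T E_i V = E^*_i V$, with the other equation handled similarly. Your alternative dimension-count argument and the remark about recovering one equation from the other via $T^2 = \lambda I$ are extra commentary beyond what the paper includes, but they are also correct.
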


\begin{proof}
By \eqref{eq:EiT},  $T E_i V = E^*_i TV$.
We have $T V = V$ since $T$ is invertible.
By these comments $T E_i V = E^*_i V$.
Similarly $T E^*_i V = E_i V$.
\end{proof}

For a sequence $H=\{H_i\}_{i=0}^d$ of subspaces of $V$,
let $T H$ denote the sequence $\{T H_i\}_{i=0}^d$.

\begin{proposition}    \label{prop:Tflags} \samepage
\ifDRAFT {\rm prop:Tflags}. \fi
Assume that $\Phi$ is self-dual.
Then
\begin{align*}
T [0] &= [0^*], & T[0^*] &=[0], & T[D] &=[D^*], & T[D^*] &=[D].
\end{align*}
\end{proposition}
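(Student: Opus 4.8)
The plan is to deduce Proposition~\ref{prop:Tflags} directly from Lemma~\ref{lem:TEiV} together with Definition~\ref{def:flags}. First I would recall that the flag $[0]$ has $i^\text{th}$ component $E_0V+E_1V+\cdots+E_iV$, and similarly for $[0^*]$, $[D]$, $[D^*]$. Since $T$ is $\F$-linear and invertible (Proposition~\ref{prop:T2}), it sends a subspace $W_1+W_2+\cdots+W_k$ to $TW_1+TW_2+\cdots+TW_k$. Applying this with $W_j = E_{j}V$ and using Lemma~\ref{lem:TEiV}, the image under $T$ of the $i^\text{th}$ component of $[0]$ is
\[
  T(E_0V+E_1V+\cdots+E_iV) = TE_0V+TE_1V+\cdots+TE_iV = E^*_0V+E^*_1V+\cdots+E^*_iV,
\]
which is exactly the $i^\text{th}$ component of $[0^*]$. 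Since this holds for all $0\le i\le d$, we conclude $T[0]=[0^*]$.

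The remaining three equalities are entirely analogous. For $T[0^*]=[0]$ I would apply Lemma~\ref{lem:TEiV} in the form $TE^*_iV = E_iV$ to the components $E^*_0V+\cdots+E^*_iV$ of $[0^*]$. For $T[D]=[D^*]$ I would use $TE_{d-i}V=E^*_{d-i}V$ applied to the components $E_dV+E_{d-1}V+\cdots+E_{d-i}V$ of $[D]$, obtaining $E^*_dV+E^*_{d-1}V+\cdots+E^*_{d-i}V$, the $i^\text{th}$ component of $[D^*]$. For $T[D^*]=[D]$ one uses $TE^*_{d-i}V=E_{d-i}V$ in the same way. In each case the key point is just that $T$ commutes with the relevant idempotents up to the swap $E_i\leftrightarrow E^*_i$, which is precisely Corollary~\ref{cor:EiTTEis}, and that $T$ being invertible guarantees $TV=V$ so that the dimensions of the components are preserved and the images are genuine flags.

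There is no real obstacle here: the proposition is a formal consequence of Lemma~\ref{lem:TEiV}. The only thing to be slightly careful about is bookkeeping of indices in the $[D]$ and $[D^*]$ cases, where the summands run through $E_d,E_{d-1},\dots,E_{d-i}$ rather than $E_0,\dots,E_i$; but since Lemma~\ref{lem:TEiV} holds for every index $0\le i\le d$, reindexing causes no difficulty. I would therefore present the proof as a one-line reduction to Lemma~\ref{lem:TEiV} applied componentwise, spelling out the $[0]\to[0^*]$ case in full and remarking that the other three are obtained in the same manner (for $[0^*]$, $[D]$, $[D^*]$ respectively) by using the appropriate half of Lemma~\ref{lem:TEiV}.
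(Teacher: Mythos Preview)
Your proposal is correct and follows exactly the paper's approach: the paper's proof is the single line ``By Definition~\ref{def:flags} and Lemma~\ref{lem:TEiV},'' and you have simply unpacked this reduction componentwise.
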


\begin{proof}
By Definition \ref{def:flags} and Lemma \ref{lem:TEiV}.
\end{proof}

\begin{proposition}    \label{prop:Tdecomp}    \samepage
\ifDRAFT {\rm prop:Tdecomp}. \fi
Assume that $\Phi$ is self-dual.
In the table below we give some decompositions $u$ of $V$.
For each decomposition $u$ we give $T u$.
\[
\begin{array}{c|cccccc}
 u & [0^*D] & [D^*D] & [0^*0] & [D^*0] & [0D] & [0^*D^*]
\\ \hline
T u & [0D^*] & [DD^*] & [00^*] & [D0^*] & [0^*D^*] & [0D]    \rule{0mm}{2.4ex}
\end{array}
\]
\end{proposition}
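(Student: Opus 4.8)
The statement to prove is Proposition~\ref{prop:Tdecomp}, which asserts that $T$ sends each of six listed decompositions to another listed decomposition. The natural approach is to reduce everything to Proposition~\ref{prop:Tflags} and the structural description of the decompositions $[zw]$ in terms of flags, namely feature (ii): for distinct $z,w\in\Omega$, the $i^{\text{th}}$ component of $[zw]$ is the intersection of the $i^{\text{th}}$ component of $[z]$ and the $(d-i)^{\text{th}}$ component of $[w]$. Since $T$ is an invertible linear map (Proposition~\ref{prop:T2}), it commutes with intersection of subspaces: $T(X\cap Y)=TX\cap TY$. So the whole proposition follows by applying $T$ componentwise and invoking $T[z]$ for each flag $[z]$.

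\textbf{Key steps, in order.} First, I would record that $T$ is invertible (Proposition~\ref{prop:T2}) and hence $T(X\cap Y)=TX\cap TY$ for all subspaces $X,Y$ of $V$; consequently, for a decomposition $u=\{V_i\}_{i=0}^d$ whose $i^{\text{th}}$ component is the intersection of the $i^{\text{th}}$ component of a flag $[z]$ and the $(d-i)^{\text{th}}$ component of a flag $[w]$, the $i^{\text{th}}$ component of $Tu$ is the intersection of the $i^{\text{th}}$ component of $T[z]$ and the $(d-i)^{\text{th}}$ component of $T[w]$. Second, I would invoke Proposition~\ref{prop:Tflags}, which gives $T[0]=[0^*]$, $T[0^*]=[0]$, $T[D]=[D^*]$, $T[D^*]=[D]$; in other words, $T$ acts on $\Omega$ by the involution $0\leftrightarrow 0^*$, $D\leftrightarrow D^*$. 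Third, combining these two facts with feature (ii) (and feature (iii), that $[zw]$ induces $[z]$, ensuring $Tu$ is again one of the decompositions from Definition~\ref{def:decompositions}), I read off that $T[zw]=[z'w']$ where $z',w'$ are the images of $z,w$ under the involution above. Applying this to each of the six entries: $T[0^*D]=[0D^*]$, $T[D^*D]=[DD^*]$, $T[0^*0]=[00^*]$, $T[D^*0]=[D0^*]$, $T[0D]=[0^*D^*]$, $T[0^*D^*]=[0D]$, which is exactly the table. For the last two entries, one can alternatively argue directly from Lemma~\ref{lem:TEiV}: the $i^{\text{th}}$ component of $[0D]$ is $E_iV$, so $T$ sends it to $E^*_iV$, which is the $i^{\text{th}}$ component of $[0^*D^*]$, and symmetrically.

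\textbf{Main obstacle.} There is no serious obstacle here; the content has all been done in Proposition~\ref{prop:Tflags} and in the structural facts about $[zw]$ quoted from \cite{NT:switch}. The only point requiring a little care is bookkeeping: one must make sure that when $T$ is applied to the pair of flags defining $[zw]$, the resulting pair $T[z],T[w]$ is still listed in the same order (so that one lands on $[z'w']$ and not its inversion $[w'z']$), and that the pair $T[z],T[w]$ is indeed opposite — but this is automatic since $T$ is invertible and carries the opposite pair $[z],[w]$ to the opposite pair $T[z],T[w]$. So the proof is essentially a one-line invocation of Proposition~\ref{prop:Tflags} together with the linearity/invertibility of $T$ and feature (ii) of the decompositions, with a direct check via Lemma~\ref{lem:TEiV} available for the $[0D]$ and $[0^*D^*]$ rows.
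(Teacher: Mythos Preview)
Your proposal is correct and matches the paper's own proof: the paper also argues that $[zw]$ is induced by the ordered pair of flags $[z],[w]$, that $T$ being invertible sends this inducing pair to $T[z],T[w]$, and then invokes Proposition~\ref{prop:Tflags} to identify the result as $[z'w']$. Your use of feature~(ii) (the intersection description) in place of the ``induced by'' language is an equivalent rephrasing, and your optional direct check via Lemma~\ref{lem:TEiV} for the last two rows is a harmless extra.
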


\begin{proof}
First consider the case $u = [0^* D]$.
By Definition \ref{def:decompositions} the decomposition $u$ is induced by
the ordered pair of flags $[0^*]$, $[D]$.
By this and since $T$ is invertible,
the decomposition $T u$ is induced by the ordered pair of flags
$T [0^*]$, $T[D]$.
By Proposition \ref{prop:Tflags} we have $T [0^*] = [0]$ and $T[D]= [D^*]$.
By these comments and Definition \ref{def:decompositions}, $T u = [0 D^*]$.
We have shown the result for the case $u = [0^* D]$.
For the other cases the proof is similar.
\end{proof}

\section{The 24 bases}
\label{sec:24bases}

Recall the Leonard system $\Phi$ from \eqref{eq:Phi2}
and the element $T$ from Definition \ref{def:T}.
In \cite{T:24points} the second author introduced $24$ bases for $V$
on which $A$, $A^*$ act in an attractive manner.
Our next goal is to describe how $T$ acts on these bases.
In this section we define the $24$ bases and give their basic properties.

Let $v_0$, $v_d$, $v^*_0$, $v^*_d$ denote nonzero vectors in $V$ such that
\begin{equation}          \label{eq:defv0vd}
   v_0 \in E_0V,  \qquad
   v_d \in E_dV,  \qquad
   v^*_0 \in E^*_0V, \qquad
   v^*_d \in E^*_dV.
\end{equation}
We consider the decompositions from Definition \ref{def:decompositions}.

\begin{lemma}    \label{lem:0sD}    \samepage
\ifDRAFT {\rm lem:0sD}. \fi
For each row in the table below, consider the decomposition $\{U_i\}_{i=0}^d$ of $V$
in the first column.
For $0 \leq i \leq d$
the vector in the second column and third column is a basis for $U_i$.
\[
\begin{array}{c|c|c}
\text{\rm decomposition $\{U_i\}_{i=0}^d$} & \text{\rm basis for $U_i$}  
      &  \text{\rm basis for $U_i$}
\\ \hline
{[0^* D]} & \tau_i (A) v^*_0 & \eta^*_{d-i}(A^*) v_d    \rule{0mm}{2.7ex}
\\
{[D^* D]} & \tau_i (A) v^*_d & \tau^*_{d-i}(A^*) v_d    \rule{0mm}{2.5ex}
\\
{[0^* 0]} & \eta_i (A) v^*_0 & \eta^*_{d-i}(A^*) v_0    \rule{0mm}{2.5ex}
\\
{[D^* 0]} & \eta_i (A) v^*_d & \tau^*_{d-i} (A^*) v_0    \rule{0mm}{2.5ex}
\end{array}
\]
\end{lemma}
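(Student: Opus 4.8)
The plan is to verify, for each of the four decompositions listed, that the vector in the second (resp.\ third) column is a \emph{nonzero} element of the corresponding component $U_i$; since each $U_i$ is one-dimensional by the general theory, ``nonzero element of $U_i$'' is the same as ``basis for $U_i$''. I would treat the four rows by reducing everything to the already-established facts about the $\Phi$-split decomposition and its relatives from Section~\ref{sec:parray}, rather than redoing the split-decomposition analysis from scratch.

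First I would handle the row $[0^*D]$. Here $U_i$ is exactly the $i^{\text{th}}$ component of the $\Phi$-split decomposition $\{U_i\}_{i=0}^d$ as defined in \eqref{eq:Ui}, in view of the formula for $[0^*D]$ displayed in Example~\ref{exam:decompositions}. The statement that $\tau_i(A)v^*_0$ is a basis for $U_i$ is then immediate from the construction of a $\Phi$-split basis in Section~\ref{sec:parray}: one picks $0\neq v\in E^*_0V$ and sets $u_i=\tau_i(A)v$, obtaining $0\neq u_i\in U_i$. For the third column I would apply the same construction to the relative $\Phi^*$ (or to $\Phi^{\downarrow\Downarrow}$), whose split decomposition has $i^{\text{th}}$ component equal to the $(d-i)^{\text{th}}$ component of $[0^*D]$; using the formula $\eta^*_i(A^*)U_d=U_{d-i}$ from Section~\ref{sec:parray} together with $v_d\in E_dV$ gives that $\eta^*_{d-i}(A^*)v_d$ is a basis for $U_i$. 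The remaining three rows $[D^*D]$, $[0^*0]$, $[D^*0]$ are obtained by applying this same reasoning to the relatives of $\Phi$: each of these decompositions is, up to the $D_4$-action, the $\Phi'$-split decomposition for an appropriate relative $\Phi'$ of $\Phi$ (replacing $E_0\leftrightarrow E_d$ corresponds to $\Downarrow$, and $E^*_0\leftrightarrow E^*_d$ to $\downarrow$), and the polynomials $\tau_i,\eta_i,\tau^*_i,\eta^*_i$ transform accordingly under $g\in D_4$ as in Definition~\ref{def:fg} and Lemma~\ref{lem:Phis}. Matching the formula for the $i^{\text{th}}$ component of each decomposition in Example~\ref{exam:decompositions} against the split-decomposition formula \eqref{eq:Ui} for the corresponding relative then yields each entry of the table.

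The main obstacle I anticipate is bookkeeping rather than mathematical depth: correctly identifying which relative $\Phi'$ of $\Phi$ has which of the four decompositions as its split decomposition, and correctly tracking how the four families of polynomials $\tau_i,\eta_i,\tau^*_i,\eta^*_i$ and the starting vectors $v_0,v_d,v^*_0,v^*_d$ get permuted under the $*$, $\downarrow$, $\Downarrow$ actions. Once the dictionary between rows of the table and relatives of $\Phi$ is pinned down, each verification is a one-line appeal to the $\Phi'$-split basis construction. I would present the argument for the row $[0^*D]$ in full detail and then say the remaining rows follow ``by applying this argument to the appropriate relative of $\Phi$,'' indicating the relative in each case.
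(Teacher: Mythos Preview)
Your proposal is correct and essentially self-contained: the row $[0^*D]$ is precisely the $\Phi$-split decomposition from \eqref{eq:Ui}, so both $\tau_i(A)v^*_0\in U_i$ and $\eta^*_{d-i}(A^*)v_d\in U_i$ follow directly from the displayed facts in Section~\ref{sec:parray} (using $U_0=E^*_0V$, $U_d=E_dV$, $\tau_i(A)U_0=U_i$, $\eta^*_i(A^*)U_d=U_{d-i}$), and the remaining rows follow by passing to $\Phi^\downarrow$, $\Phi^\Downarrow$, $\Phi^{\downarrow\Downarrow}$, tracking the permutation of $\tau,\eta,\tau^*,\eta^*$ and of $v_0,v_d,v^*_0,v^*_d$ under the $D_4$-action.

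The paper takes a different route: it simply cites \cite[Lemma~8.8]{NT:switch} and gives no in-text argument at all. Your approach has the advantage of being self-contained within the present paper, deriving the lemma from material already established in Section~\ref{sec:parray}; the paper's approach trades that for brevity by deferring to the earlier reference. Mathematically the content is the same, and your anticipated ``obstacle'' is exactly right---it is pure bookkeeping, with no new idea needed beyond what is already in Section~\ref{sec:parray}.
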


\begin{proof}
By \cite[Lemma 8.8]{NT:switch}.
\end{proof}

\begin{corollary} 
\label{cor:0sD}    \samepage
\ifDRAFT {\rm cor:0sD}. \fi
Each of the following $8$ sequences is a basis for $V$:
\begin{align}
& \{\tau_i(A) v^*_0\}_{i=0}^d,  
&& \{\tau_i(A) v^*_d\}_{i=0}^d,
&& \{\eta_i(A) v^*_0\}_{i=0}^d,
&& \{\eta_i(A) v^*_d\}_{i=0}^d,             \label{eq:tauivs0}
\\
& \{\tau^*_{d-i}(A^*) v_0\}_{i=0}^d,
&& \{\tau^*_{d-i}(A^*) v_d \}_{i=0}^d,
&& \{\eta^*_{d-i}(A^*) v_0\}_{i=0}^d,
&& \{\eta^*_{d-i}(A^*) v_d \}_{i=0}^d.           \label{eq:tausd-iv0}
\end{align}
\end{corollary}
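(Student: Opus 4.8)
The plan is to derive Corollary~\ref{cor:0sD} directly from Lemma~\ref{lem:0sD} together with the fact that the decompositions in Definition~\ref{def:decompositions} are genuine decompositions of $V$, i.e.\ direct sums of one-dimensional subspaces. First I would recall that for each of the four rows of the table in Lemma~\ref{lem:0sD}, the listed sequence $\{U_i\}_{i=0}^d$ is a decomposition of $V$ in the sense of Section~\ref{sec:intro}: this is built into Definition~\ref{def:decompositions}, since $[zw]$ is defined to be the decomposition induced by the opposite flags $[z]$, $[w]$, and opposite flags always induce a decomposition. Hence $V = U_0 \oplus U_1 \oplus \cdots \oplus U_d$ with each $U_i$ one-dimensional.

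Next I would invoke the standard linear-algebra fact that if $V$ is the direct sum of one-dimensional subspaces $U_0,\dots,U_d$ and one picks a nonzero vector $w_i \in U_i$ for each $i$, then $\{w_i\}_{i=0}^d$ is a basis for $V$. For each of the eight sequences in \eqref{eq:tauivs0}--\eqref{eq:tausd-iv0}, Lemma~\ref{lem:0sD} tells us that the $i^{\text{th}}$ vector spans the $i^{\text{th}}$ component $U_i$ of one of the four decompositions $[0^*D]$, $[D^*D]$, $[0^*0]$, $[D^*0]$ (the first column vector for the four ``$\tau_i$/$\eta_i$ in $A$'' sequences and the second column vector for the four ``$\tau^*_{d-i}$/$\eta^*_{d-i}$ in $A^*$'' sequences). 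Since a basis for a one-dimensional space is the same thing as a nonzero vector in it, each listed vector is in particular nonzero, so the hypothesis of the linear-algebra fact is met. Therefore each of the eight sequences is a basis for $V$.

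There is essentially no obstacle here: the content is entirely contained in Lemma~\ref{lem:0sD}, and the corollary is the routine observation that ``a system of representatives, one nonzero vector from each component of a decomposition, forms a basis.'' The only thing to be slightly careful about is the bookkeeping: matching each of the eight sequences in \eqref{eq:tauivs0}--\eqref{eq:tausd-iv0} with the correct column and row of the table, and noting that $\{\tau^*_{d-i}(A^*)v_0\}_{i=0}^d$ reindexes the ``basis for $U_i$'' entry $\tau^*_{d-i}(A^*)v_0$ of the $[D^*0]$ row (and similarly for the others) so that the $i^{\text{th}}$ term of the sequence does indeed span the $i^{\text{th}}$ component. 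Given that, the proof is one sentence. So my proposed proof is simply: ``For each sequence, Lemma~\ref{lem:0sD} shows that its $i^{\text{th}}$ term spans the $i^{\text{th}}$ component of one of the decompositions $[0^*D]$, $[D^*D]$, $[0^*0]$, $[D^*0]$; since that decomposition is a direct sum of these one-dimensional components, the sequence is a basis for $V$.''
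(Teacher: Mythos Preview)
Your proposal is correct and matches the paper's own proof, which simply says ``By Lemma~\ref{lem:0sD}.'' You have merely spelled out the routine linear-algebra step that picking one nonzero vector from each component of a decomposition yields a basis, which is exactly the intended content.
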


\begin{proof}
By Lemma \ref{lem:0sD}.
\end{proof}

\begin{lemma}    \label{lem:D0s}    \samepage
\ifDRAFT {\rm lem:D0s}. \fi
For each row in the table below, consider the decomposition $\{U_i\}_{i=0}^d$ of $V$
in the first column.
For $0 \leq i \leq d$
the vector in the second column and third column is a basis for $U_i$.
\[
\begin{array}{c|c|c}
\text{\rm decomposition $\{U_i\}_{i=0}^d$} 
   & \text{\rm basis for $U_i$}  &  \text{\rm basis for $U_i$}
\\ \hline
{[D 0^*]} & \eta^*_i (A^*) v_d & \tau_{d-i}(A) v^*_0    \rule{0mm}{2.7ex}
\\
{[D D^*]} & \tau^*_i (A^*)  v_d & \tau_{d-i}(A) v^*_d   \rule{0mm}{2.5ex}
\\
{[0 0^*]} & \eta^*_i (A^*) v_0 & \eta_{d-i} (A) v^*_0   \rule{0mm}{2.5ex}
\\
{[0 D^*]} &\tau^*_i (A^*) v_0 & \eta_{d-i}(A) v^*_d   \rule{0mm}{2.5ex}
\end{array}
\]
\end{lemma}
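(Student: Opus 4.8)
\textbf{Proof plan for Lemma~\ref{lem:D0s}.}
The plan is to deduce this lemma from Lemma~\ref{lem:0sD} by applying an element of the dihedral group $D_4$, rather than recomputing everything from scratch. The point is that the eight decompositions appearing in Lemma~\ref{lem:0sD} and Lemma~\ref{lem:D0s} are the same eight decompositions listed in a different order, and the two bases exhibited for each in Lemma~\ref{lem:0sD} are exactly the two bases one wants in Lemma~\ref{lem:D0s} once the roles of $A$ and $A^*$ are swapped. Concretely, recall from Section~\ref{sec:LS} that the relative $\Phi^*$ has eigenvalue sequence $\{\th^*_i\}$, dual eigenvalue sequence $\{\th_i\}$, and that the roles of $E_i$ and $E^*_i$ are interchanged. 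Under the correspondence of Definition~\ref{def:fg} with $g = {*}$, the flags $[0],[D],[0^*],[D^*]$ attached to $\Phi$ become the flags $[0^*],[D^*],[0],[D]$ attached to $\Phi^*$, and hence a decomposition $[zw]$ for $\Phi$ corresponds to the decomposition obtained by applying $*$ to each of $z,w$. So I would first check that applying $*$ to Lemma~\ref{lem:0sD} literally produces the table in Lemma~\ref{lem:D0s}.

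The key steps, in order, are as follows. First, record that $\Phi^*$ is again a Leonard system (this is in Section~\ref{sec:LS}), so Lemma~\ref{lem:0sD} applies to $\Phi^*$ verbatim. Second, translate the data: the polynomials $\tau_i,\eta_i$ for $\Phi^*$ are the polynomials $\tau^*_i,\eta^*_i$ for $\Phi$ and vice versa (immediate from Definition~\ref{def:tau} and Lemma~\ref{lem:Phis}(i)), the operators $A$ and $A^*$ trade places, and the vectors $v^*_0, v^*_d, v_0, v_d$ of \eqref{eq:defv0vd}, which satisfy $v^*_0 \in E^*_0 V$ etc., play the roles of $v_0,v_d,v^*_0,v^*_d$ for $\Phi^*$. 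Third, for each of the four rows of Lemma~\ref{lem:0sD} applied to $\Phi^*$, rewrite the decomposition and the two basis vectors back in terms of $\Phi$-data; for instance the $[0^*D]$-row for $\Phi^*$ reads: the decomposition $[0D^*]$ of $V$ (for $\Phi$) has $i$th component spanned by $\tau^*_i(A^*) v_0$ and also by $\eta_{d-i}(A) v^*_d$, which is exactly the fourth row of Lemma~\ref{lem:D0s}. Carrying this out for the rows corresponding to $[D^*D]$, $[0^*0]$, $[D^*0]$ of $\Phi^*$ yields the rows $[DD^*]$, $[0 0^*]$, $[D0^*]$ of Lemma~\ref{lem:D0s}, in the stated order. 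Alternatively, and perhaps even more directly, I would simply cite \cite[Lemma 8.8]{NT:switch} the same way Lemma~\ref{lem:0sD} does, since that reference presumably already contains all sixteen rows; the one-line proof ``By \cite[Lemma 8.8]{NT:switch}'' would then suffice.

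The only real obstacle is bookkeeping: making sure the index shift $i \leftrightarrow d-i$ and the swap $\downarrow \leftrightarrow \Downarrow$ are tracked correctly through the $*$-relabeling, so that the rows come out in precisely the order and form printed in the statement of Lemma~\ref{lem:D0s}. In particular one must be careful that under $*$ the flag $[D]$ for $\Phi$ (built from $\{E_{d-i}V\}$) becomes $[D^*]$ for $\Phi^*$ and not $[0^*]$, and similarly for the other three; this is controlled by Definition~\ref{def:flags} together with the description of $\Phi^*$, and is routine once written out. No deep new idea is needed; the lemma is the $*$-dual of Lemma~\ref{lem:0sD}.
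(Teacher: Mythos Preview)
Your approach via the $*$-relabeling is correct and the bookkeeping you outline checks out, but the paper takes a shorter route. The paper simply observes that the four decompositions listed in Lemma~\ref{lem:D0s} are precisely the \emph{inversions} of the four decompositions in Lemma~\ref{lem:0sD}: by the remark following Definition~\ref{def:decompositions}, $[wz]$ is the inversion of $[zw]$, so the $i^{\text{th}}$ component of $[D0^*]$ equals the $(d-i)^{\text{th}}$ component of $[0^*D]$, and likewise for the other three rows. Replacing $i$ by $d-i$ in each row of the table in Lemma~\ref{lem:0sD} then immediately produces the table in Lemma~\ref{lem:D0s}, with no need to pass to $\Phi^*$ or track the swap $\downarrow \leftrightarrow \Downarrow$.

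Your $*$-duality argument buys a conceptual explanation for why the two tables have the same shape with $A$ and $A^*$ interchanged, and it generalizes to situations where one really needs to transport a result across a relative; the paper's inversion argument is purely combinatorial and avoids all the relabeling pitfalls you correctly flag as the main obstacle. Your fallback suggestion of citing \cite[Lemma 8.8]{NT:switch} directly would also work, but the inversion observation is the cleanest.
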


\begin{proof}
These are the inversions of the decompositions in Lemma \ref{lem:0sD}.
\end{proof}

\begin{corollary} 
\label{cor:D0s}    \samepage
\ifDRAFT {\rm cor:D0s}. \fi
Each of the following $8$ sequences is a basis for $V$:
\begin{align}
& \{\tau^*_i(A^*) v_0\}_{i=0}^d,
&& \{\tau^*_i(A^*) v_d\}_{i=0}^d,
&& \{\eta^*_i(A^*) v_0\}_{i=0}^d,
&& \{\eta^*_i(A^*) v_d\}_{i=0}^d,           \label{eq:tausiv0}
\\
& \{\tau_{d-i}(A) v^*_0 \}_{i=0}^d,
&& \{\tau_{d-i}(A) v^*_d\}_{i=0}^d,
&& \{\eta_{d-i}(A) v^*_0 \}_{i=0}^d,
&& \{\eta_{d-i}(A) v^*_d\}_{i=0}^d.        \label{eq:taud-ivs0}
\end{align}
\end{corollary}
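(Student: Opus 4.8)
\textbf{Proof proposal for Corollary \ref{cor:D0s}.}

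The plan is to observe that Corollary \ref{cor:D0s} is an immediate consequence of Lemma \ref{lem:D0s}, exactly as Corollary \ref{cor:0sD} follows from Lemma \ref{lem:0sD}. First I would recall that by Lemma \ref{lem:D0s}, each of the eight displayed decompositions of $V$ (namely $[D0^*]$, $[DD^*]$, $[00^*]$, $[0D^*]$) has, for each $i$ with $0 \leq i \leq d$, a basis for its $i^\text{th}$ component of the form given in the second column and a basis of the form given in the third column. Since a decomposition $\{U_i\}_{i=0}^d$ of $V$ is by definition a sequence of one-dimensional subspaces whose direct sum is $V$, picking a nonzero vector from each $U_i$ yields a basis for $V$.

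The key step is then to read off, from the four rows of the table in Lemma \ref{lem:D0s}, the eight resulting sequences of vectors. The second columns of the four rows give $\{\eta^*_i(A^*)v_d\}_{i=0}^d$, $\{\tau^*_i(A^*)v_d\}_{i=0}^d$, $\{\eta^*_i(A^*)v_0\}_{i=0}^d$, $\{\tau^*_i(A^*)v_0\}_{i=0}^d$, which (after reordering) are precisely the four sequences in \eqref{eq:tausiv0}. The third columns give $\{\tau_{d-i}(A)v^*_0\}_{i=0}^d$, $\{\tau_{d-i}(A)v^*_d\}_{i=0}^d$, $\{\eta_{d-i}(A)v^*_0\}_{i=0}^d$, $\{\eta_{d-i}(A)v^*_d\}_{i=0}^d$, which are the four sequences in \eqref{eq:taud-ivs0}. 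Thus each of the eight sequences arises as a choice of nonzero vectors, one from each component of one of the decompositions in Lemma \ref{lem:D0s}, and is therefore a basis for $V$.

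There is essentially no obstacle here: the content is entirely contained in Lemma \ref{lem:D0s}, and the corollary is just a repackaging of that lemma's conclusion into a statement about bases for $V$ rather than bases for the individual components. The only thing to be careful about is the bookkeeping of indices — that replacing $i$ by $d-i$ in a sequence indexed over $0 \leq i \leq d$ does not change the underlying set of vectors, so the $\tau_{d-i}(A)$ and $\eta_{d-i}(A)$ forms in the third column indeed yield the same sequences (up to reordering) as written in \eqref{eq:taud-ivs0}. Hence the proof reduces to the single citation:

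\begin{proof}
By Lemma \ref{lem:D0s}.
\end{proof}
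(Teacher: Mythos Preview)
Your proposal is correct and matches the paper's proof exactly: the paper's proof is literally ``By Lemma \ref{lem:D0s}.'' Your surrounding explanation of how the eight sequences are read off from the table is accurate and the bookkeeping is fine.
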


\begin{proof}
By Lemma \ref{lem:D0s}.
\end{proof}

\begin{lemma}    \label{lem:0D}    \samepage
\ifDRAFT {\rm lem:0D}. \fi
For each row in the table below, consider the decomposition $\{U_i\}_{i=0}^d$ of $V$
in the first column.
For $0 \leq i \leq d$
the vector in the second column and third column is a basis for $U_i$.
\[
\begin{array}{c|c|c}
\text{\rm decomposition $\{U_i\}_{i=0}^d$} 
  & \text{\rm basis for $U_i$}  &  \text{\rm basis for $U_i$}
\\ \hline
{[0D]} & E_i v^*_0 & E_i v^*_d   \rule{0mm}{2.7ex}
\\
{[0^* D^*]} & E^*_i v_0 & E^*_i v_d   \rule{0mm}{2.5ex}
\\
{[D0]} & E_{d-i} v^*_0 & E_{d-i} v^*_d  \rule{0mm}{2.5ex}
\\
{[D^* 0^*]} & E^*_{d-i} v_0 & E^*_{d-i} v_d   \rule{0mm}{2.5ex}
\end{array}
\]
\end{lemma}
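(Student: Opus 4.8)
For each decomposition in the table — namely $[0D]$, $[0^*D^*]$, $[D0]$, $[D^*0^*]$ — and for $0 \le i \le d$, the vectors $E_i v^*_0$, $E_i v^*_d$ (resp.\ $E^*_i v_0$, $E^*_i v_d$, etc.) each span the $i^{\text{th}}$ component of that decomposition.

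The plan is to read off the $i^{\text{th}}$ component of each decomposition from Example \ref{exam:decompositions} and then verify that the proposed vector is nonzero and lies in that one-dimensional subspace; since the component has dimension one, nonzero membership forces the vector to be a basis. By Example \ref{exam:decompositions}, the $i^{\text{th}}$ component of $[0D]$ is $E_iV$, the $i^{\text{th}}$ component of $[0^*D^*]$ is $E^*_iV$, the $i^{\text{th}}$ component of $[D0]$ is $E_{d-i}V$, and the $i^{\text{th}}$ component of $[D^*0^*]$ is $E^*_{d-i}V$. So for the row $[0D]$ I must check that $E_i v^*_0$ and $E_i v^*_d$ are nonzero elements of $E_iV$; membership is immediate since $E_i$ is the projection onto $E_iV$, so the only content is nonvanishing. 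The other three rows are handled the same way, with the roles of $A,A^*$ and of $0,d$ interchanged, so it suffices to treat $[0D]$ and invoke symmetry (apply $*$, $\downarrow$, $\Downarrow$, or just repeat the argument verbatim).

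First I would establish $E_i v^*_0 \ne 0$ for $0 \le i \le d$. Recall $v^*_0$ is a nonzero vector in $E^*_0V$. Suppose $E_i v^*_0 = 0$ for some $i$. Since $A,A^*$ is a Leonard pair, $A^*$ acts on the eigenbasis of $A$ in irreducible tridiagonal fashion; equivalently, writing $v^*_0 = \sum_{j=0}^d E_j v^*_0$, the condition that some coefficient vanishes would contradict the standard fact (e.g.\ from \cite{T:Leonard} or \cite{T:survey}) that a Leonard pair is ``irreducible'' in the sense that no eigenvector of $A^*$ has a zero $E_j$-component. The cleanest route, though, is via the traces already recorded in Section \ref{sec:trace}: one has $E^*_0V = \F v^*_0$, so $E_i v^*_0 = 0$ iff $E_i E^*_0 = 0$, and $\tr(E_i E^*_0) = \tr(E_i E^*_0 E_i)$ together with $E_i E^*_0 = 0$ would give $\tr(E_iE^*_0) = 0$, contradicting the nonvanishing of the scalar in \eqref{eq:trErEs0}. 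Actually even more directly: $E_i E^*_0 = 0$ together with \eqref{eq:trErEs0} being nonzero is an immediate contradiction, since $\tr(E_i E^*_0) \ne 0$ forces $E_i E^*_0 \ne 0$, hence $E_i v^*_0 \ne 0$.

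The remaining cases follow the identical pattern: $E_i v^*_d \ne 0$ because $\tr(E_iE^*_d) \ne 0$ by \eqref{eq:trErEsd}; $E^*_i v_0 \ne 0$ and $E^*_i v_d \ne 0$ because $\tr(E^*_iE_0) \ne 0$ and $\tr(E^*_iE_d) \ne 0$ by \eqref{eq:trEsrE0}, \eqref{eq:trEsrEd}; and then $E_{d-i}v^*_0$, $E_{d-i}v^*_d$, $E^*_{d-i}v_0$, $E^*_{d-i}v_d$ are nonzero by reindexing $i \mapsto d-i$. In each row, the two listed vectors then lie in the stated one-dimensional component (by Example \ref{exam:decompositions}) and are nonzero, hence each is a basis for $U_i$. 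I do not anticipate a genuine obstacle here — the only subtlety is making sure the ``$i^{\text{th}}$ component'' is being read correctly from Example \ref{exam:decompositions} (in particular the inversion convention relating $[zw]$ and $[wz]$, which flips $i \leftrightarrow d-i$), so that the shift to $E_{d-i}$ in the $[D0]$ and $[D^*0^*]$ rows matches; the cleanest write-up simply cites Example \ref{exam:decompositions} for the components and \eqref{eq:trErEs0}--\eqref{eq:trEsrEd} for the nonvanishing.
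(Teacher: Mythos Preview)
Your proposal is correct and follows essentially the same approach as the paper: identify the $i^{\text{th}}$ component via Example \ref{exam:decompositions} (so membership is immediate), and then check nonvanishing. The only difference is that the paper cites \cite[Lemma 10.2]{T:survey} for $E_i v^*_0 \ne 0$, whereas you derive nonvanishing internally from the nonzero traces \eqref{eq:trErEs0}--\eqref{eq:trEsrEd}; your route is slightly more self-contained but otherwise equivalent.
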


\begin{proof}
First consider the decomposition $[0D]$.
By Example \ref{exam:decompositions},  $E_i v^*_0 \in U_i$.
By \cite[Lemma 10.2]{T:survey}, $E_i v^*_0 \neq 0$.
Thus $E_i v^*_0$ is a basis for $U_i$.
Similarly $E_i v^*_d$ is a basis for $U_i$.
The proof is similar for the remaining decompositions.
\end{proof}

\begin{corollary}  
\label{cor:0D}    \samepage
\ifDRAFT {\rm cor:0D}. \fi
Each of the following $8$ sequences is a basis for $V$:
\begin{align}
& \{E_i v^*_0\}_{i=0}^d,
&& \{E_i v^*_d \}_{i=0}^d,
&& \{E_{d-i} v^*_0\}_{i=0}^d,
&& \{E_{d-i} v^*_d\}_{i=0}^d,                       \label{eq:Eivs0}
\\
& \{E^*_i v_0\}_{i=0}^d,
&& \{E^*_i v_d\}_{i=0}^d,
&& \{E^*_{d-i} v_0\}_{i=0}^d,
&& \{E^*_{d-i} v_d\}_{i=0}^d.                  \label{eq:Esiv0}
\end{align}
\end{corollary}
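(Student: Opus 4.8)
\textbf{Proof proposal for Corollary~\ref{cor:0D}.}
The plan is to deduce the claim directly from Lemma~\ref{lem:0D}, in exactly the way that Corollary~\ref{cor:0sD} and Corollary~\ref{cor:D0s} are deduced from their preceding lemmas. First I recall the general principle at work: if $\{U_i\}_{i=0}^d$ is a decomposition of $V$ and $w_i$ is a nonzero vector in $U_i$ for $0 \leq i \leq d$, then $\{w_i\}_{i=0}^d$ is a basis for $V$, since $V = \sum_{i=0}^d U_i$ is a direct sum of one-dimensional subspaces and each $w_i$ spans its component $U_i$. So for each of the eight sequences in \eqref{eq:Eivs0}, \eqref{eq:Esiv0}, it suffices to exhibit a decomposition of $V$ such that the $i^{\text{th}}$ vector of the sequence lies in (hence spans) the $i^{\text{th}}$ component.

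Second, I would read off the required decompositions from Lemma~\ref{lem:0D}. That lemma, together with Definition~\ref{def:decompositions} and Lemma~\ref{lem:opposite}, tells us that $[0D]$, $[0^*D^*]$, $[D0]$, $[D^*0^*]$ are decompositions of $V$, and that $E_i v^*_0$ and $E_i v^*_d$ span the $i^{\text{th}}$ component of $[0D]$, that $E^*_i v_0$ and $E^*_i v_d$ span the $i^{\text{th}}$ component of $[0^*D^*]$, that $E_{d-i} v^*_0$ and $E_{d-i} v^*_d$ span the $i^{\text{th}}$ component of $[D0]$, and that $E^*_{d-i} v_0$ and $E^*_{d-i} v_d$ span the $i^{\text{th}}$ component of $[D^*0^*]$. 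Applying the general principle to each of these four decompositions and each of the two spanning vectors per component yields all eight sequences of \eqref{eq:Eivs0}, \eqref{eq:Esiv0} as bases for $V$.

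There is essentially no obstacle here; the corollary is a bookkeeping consequence of the lemma, and the proof is the single line ``By Lemma~\ref{lem:0D}.'' The only point worth a moment's care is making sure the index conventions match: in \eqref{eq:Eivs0} the third and fourth sequences are $\{E_{d-i}v^*_0\}_{i=0}^d$ and $\{E_{d-i}v^*_d\}_{i=0}^d$, which correspond to the decomposition $[D0]$ whose $i^{\text{th}}$ component (by the table in Lemma~\ref{lem:0D}) is spanned by $E_{d-i}v^*_0$ and $E_{d-i}v^*_d$; reindexing $i \mapsto d-i$ shows this is the inversion of $[0D]$, consistent with the remark following Definition~\ref{def:decompositions}. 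Thus the proof reads:
\begin{proof}
By Lemma \ref{lem:0D}.
\end{proof}
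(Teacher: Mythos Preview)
Your proposal is correct and matches the paper's own proof exactly: the paper also gives the one-line proof ``By Lemma~\ref{lem:0D}.'' Your unpacking of why that lemma suffices is accurate and there is nothing to add.
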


\begin{proof}
By Lemma \ref{lem:0D}.
\end{proof}

\begin{note}
The 24 bases \eqref{eq:tauivs0}--\eqref{eq:Esiv0} are investigated by
the second author in \cite{T:24points}.
In \cite[Theorem 11.2]{T:24points} the matrices representing $A$ and $A^*$
with respect to these 24 bases are given.
In \cite[Section 15]{T:24points}
the transition matrices between these 24 bases are given.
\end{note}

Let  $\{u_i\}_{i=0}^d$ denote a basis for $V$. Then $\{u_{d-i}\}_{i=0}^d$ is a basis for $V$,
called the {\em inversion of $\{u_i\}_{i=0}^d$}.
For each of the $24$ bases listed in \eqref{eq:tauivs0}--\eqref{eq:Esiv0},
its inversion is listed in \eqref{eq:tauivs0}--\eqref{eq:Esiv0}.

\section{Some relationship among the $24$ bases}
\label{sec:rel24}

Recall the Leonard system $\Phi$ from \eqref{eq:Phi2}.
In Lemmas \ref{lem:0sD}, \ref{lem:D0s}, \ref{lem:0D}
we gave some decompositions of $V$.
For each decomposition and  $0 \leq i \leq d$ 
we gave two bases for its $i^\text{th}$ component.
In this section  we show how these bases are related.
To do this, we consider the following inner products:

\begin{align}      
& \b{v_0, v_0},  \qquad
\b{v_d, v_d},   \qquad
\b{v^*_0, v^*_0}, \qquad
\b{v^*_d, v^*_d},                     \label{eq:first}
\\
& \b{v_0, v^*_0}, \qquad
\b{v_0, v^*_d}, \qquad
\b{v_d, v^*_0}, \qquad
\b{v_d, v^*_d}.                        \label{eq:second}
\end{align}
The above scalars are all nonzero
by \cite[Lemma 15.5]{T:qRacah} applied to the relatives of $\Phi$.

\begin{lemma}  \cite[Lemma 9.5]{NT:maps}
\label{lem:E0xis0}      \samepage
\ifDRAFT {\rm lem:E0xis0}. \fi
We have
\begin{align}
 E_0 \,v^*_0 &= \frac{\b{v_0, v^*_0}}{\b{v_0, v_0}} \, v_0,   &
 E_d \, v^*_0 &= \frac{\b{v_d, v^*_0}}{\b{v_d, v_d}} \, v_d,
                                                           \label{eq:vs0} \\
 E_0 \, v^*_d &= \frac{\b{v_0, v^*_d}}{\b{v_0, v_0}} \, v_0,   &
 E_d \, v^*_d &= \frac{\b{v_d, v^*_d}}{\b{v_d, v_d}} \, v_d,    
                                                           \label{eq:vsd} \\
 E^*_0 \, v_0 &= \frac{\b{v_0, v^*_0}}{\b{v^*_0, v^*_0}} \, v^*_0,  &
 E^*_d \, v_0 &= \frac{\b{v_0, v^*_d}}{\b{v^*_d, v^*_d}} \, v^*_d,  
                                                             \label{eq:v0} \\
 E^*_0 \, v_d &= \frac{\b{v_d, v^*_0}}{\b{v^*_0, v^*_0}} \, v^*_0, &
 E^*_d \, v_d &= \frac{\b{v_d, v^*_d}}{\b{v^*_d, v^*_d}} \, v^*_d.  
                                                              \label{eq:vd}
\end{align}
\end{lemma}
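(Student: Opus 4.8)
The plan is to exploit the bilinear form $\b{\;,\,}$ associated with $\dagger$ together with the fact that the primitive idempotents of $A$ and $A^*$ are fixed by $\dagger$. First I would observe that, since $A$ and $A^*$ are multiplicity-free (as recalled after Definition~\ref{def:LP}), each of the eigenspaces $E_0V$, $E_dV$, $E^*_0V$, $E^*_dV$ is one-dimensional, hence spanned respectively by $v_0$, $v_d$, $v^*_0$, $v^*_d$. Therefore each left-hand side in the eight displayed equations is automatically a scalar multiple of the indicated eigenvector, and the whole problem reduces to identifying that scalar.

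To determine, say, the scalar $\alpha$ with $E_0 v^*_0 = \alpha v_0$, I would pair both sides with $v_0$ using $\b{\;,\,}$. On one side, $\b{E_0 v^*_0, v_0} = \alpha\,\b{v_0,v_0}$. On the other side, the defining property $\b{Xu,v}=\b{u,X^\dagger v}$ of the form, together with $E_0^\dagger = E_0$ from Lemma~\ref{lem:dagger2}(i) and the fact that $E_0 v_0 = v_0$, gives $\b{E_0 v^*_0, v_0} = \b{v^*_0, E_0 v_0} = \b{v^*_0, v_0} = \b{v_0, v^*_0}$, where the last equality uses symmetry of the form. Since $\b{v_0,v_0}\neq 0$ by the remark following \eqref{eq:second}, I can solve for $\alpha = \b{v_0,v^*_0}/\b{v_0,v_0}$, which is the left equation of \eqref{eq:vs0}.

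The remaining seven identities are proved in exactly the same manner. For the formulas in \eqref{eq:vs0} and \eqref{eq:vsd}, pair with $v_0$ or $v_d$ and invoke $E_0^\dagger=E_0$, $E_d^\dagger=E_d$ from Lemma~\ref{lem:dagger2}(i); for \eqref{eq:v0} and \eqref{eq:vd}, pair with $v^*_0$ or $v^*_d$ and invoke $(E^*_0)^\dagger=E^*_0$, $(E^*_d)^\dagger=E^*_d$ from Lemma~\ref{lem:dagger2}(ii). In each case one uses that $E_r$ fixes $v_r$ and $E^*_r$ fixes $v^*_r$, that the form is symmetric, and that the relevant denominator is one of the nonzero scalars in \eqref{eq:first}. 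There is essentially no obstacle: the only points needing care are verifying that the eigenspace in question is one-dimensional (so that the left side is indeed a scalar multiple of the named vector) and checking in each of the eight cases that the denominator that appears is among the nonvanishing scalars \eqref{eq:first}.
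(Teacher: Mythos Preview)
Your argument is correct. The paper does not actually supply a proof of this lemma; it merely cites \cite[Lemma 9.5]{NT:maps}. Your proposed argument is precisely the standard one: project onto the one-dimensional eigenspace, then read off the scalar by pairing with the eigenvector and using $E_r^\dagger = E_r$, $(E^*_r)^\dagger = E^*_r$ together with symmetry of $\b{\;,\,}$. Nothing needs to be added.
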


The scalars \eqref{eq:first}, \eqref{eq:second} satisfy the following relations.

\begin{lemma}  \cite[Lemma 9.7]{NT:maps}
 \label{lem:rels1}            \samepage
\ifDRAFT {\rm lem:rels1}. \fi
We have
\begin{align}     
  \frac{\b{v_0, v^*_d} \b{v_d,v^*_0}  }{ \b{v_0, v^*_0} \b{v_d,v^*_d} }
&= \frac{\vphi_1 \cdots \vphi_d}
          {\phi_1 \cdots \phi_d}.                                                     \label{eq:newrel}
\end{align}
\end{lemma}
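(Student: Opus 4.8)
The plan is to prove the identity \eqref{eq:newrel} by computing a single trace in two different ways, using the formulas for $E_0 v^*_d$, $E_d v^*_0$, etc. from Lemma \ref{lem:E0xis0} together with the trace formulas from Section \ref{sec:trace}. First I would observe that $\b{v_0,v^*_0}\b{v_d,v^*_d}$ and $\b{v_0,v^*_d}\b{v_d,v^*_0}$ both appear naturally when one expands a product of projections. Specifically, consider the scalar $\tr(E_0 E^*_d E_d E^*_0)$. On one hand, the composite $E_0 E^*_d E_d E^*_0$ maps $V$ into $E_0 V = \F v_0$, and applying Lemma \ref{lem:E0xis0} repeatedly to track where $v^*_0$ goes (namely $E^*_0 v^*_0 = v^*_0$, then... ) lets one write this composite as an explicit scalar times the projection-like map onto $v_0$; taking the trace then yields a ratio of the bracket quantities in \eqref{eq:second} and \eqref{eq:first}. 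On the other hand, one can also expand $\tr(E_0 E^*_d E_d E^*_0)$ using the $\dagger$-symmetry $\tr(XY)=\tr(YX)$ and the trace values \eqref{eq:trErEs0}--\eqref{eq:trEsrEd}, or more directly recognize it via \eqref{eq:E0Es0E0}-type identities; this produces the ratio $(\vphi_1\cdots\vphi_d)/(\phi_1\cdots\phi_d)$ on the nose (up to factors of $\nu$ and its relatives that cancel by \eqref{eq:nud}, \eqref{eq:nuD}).

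Concretely, here is the cleaner route I would actually write down. Start from the chain
\[
  E^*_d E_d E^*_0 E_0 \, v^*_0,
\]
and evaluate it step by step. Since $v^*_0 \in E^*_0 V$ we have $E_0 v^*_0 = \tfrac{\b{v_0,v^*_0}}{\b{v_0,v_0}} v_0$ by \eqref{eq:vs0}; then $E^*_0 v_0 = \tfrac{\b{v_0,v^*_0}}{\b{v^*_0,v^*_0}} v^*_0$ by \eqref{eq:v0}; then $E_d v^*_0 = \tfrac{\b{v_d,v^*_0}}{\b{v_d,v_d}} v_d$ by \eqref{eq:vs0}; then $E^*_d v_d = \tfrac{\b{v_d,v^*_d}}{\b{v^*_d,v^*_d}} v^*_d$ by \eqref{eq:vd}. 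Composing, $E^*_d E_d E^*_0 E_0$ sends $v^*_0$ to a scalar multiple of $v^*_d$, and a parallel computation starting from $v^*_d$ and running $E_0, E^*_0, E_d, E^*_d$ in the appropriate order produces the "reverse" product of brackets. The key algebraic point is that the product $E^*_d E_d E^*_0 E_0 E^*_d \cdots$ around a $4$-cycle of idempotents has a trace that is computed by \eqref{eq:trErEs0}--\eqref{eq:trEsrEd} with $r=0$ or $r=d$: the four scalars $\tr(E_0 E^*_d)$, $\tr(E^*_d E_d)$, $\tr(E_d E^*_0)$, $\tr(E^*_0 E_0)$ multiply together, and in that product all the $\eta$, $\tau$, $\nu$ factors telescope, leaving exactly $(\vphi_1\cdots\vphi_d)/(\phi_1\cdots\phi_d)$. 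Matching the two expressions for this common trace gives \eqref{eq:newrel}.

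The step I expect to be the main obstacle is bookkeeping: making sure the cyclic order of the four idempotents is chosen so that the bracket ratio coming out of Lemma \ref{lem:E0xis0} is precisely $\tfrac{\b{v_0,v^*_d}\b{v_d,v^*_0}}{\b{v_0,v^*_0}\b{v_d,v^*_d}}$ and not its reciprocal or some variant, and that the trace-formula side collapses to the $\vphi/\phi$ ratio without leftover $\eta_d(\th_0)$, $\tau_d(\th_d)$, $\eta^*_d(\th^*_0)$, $\tau^*_d(\th^*_d)$ terms. Using \eqref{eq:nud}, \eqref{eq:nuD} to rewrite those $\eta$/$\tau$ products in terms of $\nu, \nu^\downarrow, \nu^\Downarrow, \nu^{\downarrow\Downarrow}$ should make the cancellation transparent. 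An alternative, possibly shorter, derivation: note that $\b{v_0,v^*_0}/\b{v_0,v_0}$ is the coefficient in $E_0 v^*_0 = c\, v_0$, and similar coefficients govern passage between the four distinguished bases; the product around the $4$-cycle of such coefficients is exactly the ratio of two "transition determinants" already implicit in \cite{T:24points}, and the $\vphi/\phi$ value is the known ratio $\nu^{\downarrow}/\nu = \nu^{\downarrow\Downarrow}/\nu^{\Downarrow}$ obtained by dividing the relevant formulas in \eqref{eq:nud}, \eqref{eq:nuD}. Either way, the proof is a short computation once the cycle is set up correctly, so I would keep the write-up to a few lines citing Lemma \ref{lem:E0xis0} and the trace identities.
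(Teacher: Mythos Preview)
The paper does not prove this lemma; it is quoted verbatim from \cite[Lemma~9.7]{NT:maps} with no argument given. So there is no ``paper's own proof'' to compare against.

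As for your plan itself, there is a real gap in the key step. Tracking the $4$-cycle $E^*_0 E_d E^*_d E_0$ on $v^*_0$ via Lemma~\ref{lem:E0xis0} produces the \emph{product}
\[
\frac{\b{v_0,v^*_0}\,\b{v_0,v^*_d}\,\b{v_d,v^*_d}\,\b{v_d,v^*_0}}
     {\b{v_0,v_0}\,\b{v^*_d,v^*_d}\,\b{v_d,v_d}\,\b{v^*_0,v^*_0}},
\]
not the ratio $\b{v_0,v^*_d}\b{v_d,v^*_0}/\b{v_0,v^*_0}\b{v_d,v^*_d}$. Your proposed evaluation of the same trace ``on the other side'' is where the argument breaks: the trace of a product of four rank-one idempotents is \emph{not} the product of the four pairwise traces $\tr(E_0E^*_d)\,\tr(E^*_dE_d)\,\tr(E_dE^*_0)\,\tr(E^*_0E_0)$, so the telescoping you describe does not happen. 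Your alternative shortcut is also off: a direct check of \eqref{eq:nud} shows $\nu^\downarrow/\nu = \dfrac{\tau^*_d(\th^*_d)\,\phi_1\cdots\phi_d}{\eta^*_d(\th^*_0)\,\vphi_1\cdots\vphi_d}$, which is not the bare ratio $\vphi_1\cdots\vphi_d/\phi_1\cdots\phi_d$.

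A route that does work is to combine your $4$-cycle computation with the operator identity \eqref{eq:formula2pre} at $i=0$, giving $E_0E^*_dE_dE^*_0 = \dfrac{\vphi_1\cdots\vphi_d}{\tau_d(\th_d)\tau^*_d(\th^*_d)}\,E_0E^*_0$; equating the two expressions and then using one of the relations in Lemma~\ref{lem:rels2} to eliminate $\b{v_0,v_0}\b{v^*_0,v^*_0}$ and $\b{v_d,v_d}\b{v^*_d,v^*_d}$ yields \eqref{eq:newrel} with the correct sign. Note that Lemma~\ref{lem:rels2} by itself only determines the left side of \eqref{eq:newrel} up to sign (cf.\ Note~\ref{note:rel}), so some genuinely multiplicative input such as \eqref{eq:formula2pre} is unavoidable.
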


\begin{lemma}  \cite[Corollary 8.3, Lemma 9.6]{NT:maps}
 \label{lem:rels2}            \samepage
\ifDRAFT {\rm lem:rels2}. \fi
We have
\begin{align}     
\frac{ \b{v_0,v_0} \b{v^*_0,v^*_0} } { \b{v_0,v^*_0}^2 }
  &= \frac{ \eta_d(\th_0) \eta^*_d (\th^*_0) } { \phi_1 \cdots \phi_d},   \label{eq:00s} 
\\
\frac{ \b{v_0,v_0} \b{v^*_d,v^*_d} } { \b{v_0,v^*_d}^2 }
  &= \frac{ \eta_d(\th_0) \tau^*_d(\th^*_d) } { \vphi_1 \cdots \vphi_d },   \label{eq:0ds}
\\
\frac{ \b{v_d,v_d} \b{v^*_0,v^*_0} } { \b{v_d,v^*_0}^2 }
  &= \frac{ \tau_d(\th_d) \eta^*_d(\th^*_0) } { \vphi_1 \cdots \vphi_d },    \label{eq:d0s} 
\\
\frac{ \b{v_d,v_d} \b{v^*_d,v^*_d} } { \b{v_d,v^*_d}^2 }
 &= \frac{ \tau_d(\th_d) \tau^*_d(\th^*_d) } { \phi_1 \cdots \phi_d}.     \label{eq:dds}
\end{align}
\end{lemma}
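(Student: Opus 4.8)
The plan is to establish the first relation \eqref{eq:00s} by a direct computation with the primitive idempotents $E_0$, $E^*_0$, and then obtain \eqref{eq:0ds}, \eqref{eq:d0s}, \eqref{eq:dds} by applying \eqref{eq:00s} to the relatives $\Phi^\downarrow$, $\Phi^\Downarrow$, $\Phi^{\downarrow\Downarrow}$ of $\Phi$.

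To prove \eqref{eq:00s}, I will combine the two identities
\[
  E_0 v^*_0 = \frac{\b{v_0, v^*_0}}{\b{v_0, v_0}} \, v_0,
  \qquad\qquad
  E^*_0 v_0 = \frac{\b{v_0, v^*_0}}{\b{v^*_0, v^*_0}} \, v^*_0
\]
from Lemma \ref{lem:E0xis0}. Since $E_0 v_0 = v_0$, composing these gives
\[
  E_0 E^*_0 E_0 v_0 \;=\; E_0 E^*_0 v_0 \;=\; \frac{\b{v_0, v^*_0}^2}{\b{v_0, v_0}\,\b{v^*_0, v^*_0}}\, v_0 .
\]
On the other hand $\nu E_0 E^*_0 E_0 = E_0$ by \eqref{eq:E0Es0E0}, so $\nu E_0 E^*_0 E_0 v_0 = v_0$. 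Comparing these, and using $v_0 \neq 0$, we get $\nu = \b{v_0, v_0}\,\b{v^*_0, v^*_0} / \b{v_0, v^*_0}^2$. Finally \eqref{eq:nud} expresses $\nu = \eta_d(\th_0)\,\eta^*_d(\th^*_0)/(\phi_1 \cdots \phi_d)$, which gives \eqref{eq:00s}.

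For the remaining three relations I would invoke the relatives. The antiautomorphism $\dagger$, and hence the bilinear form $\b{\;,\,}$ that $\dagger$ determines up to a nonzero scalar (which cancels in every ratio appearing in \eqref{eq:00s}--\eqref{eq:dds}), depends only on the Leonard pair $A, A^*$; thus it is common to $\Phi$ and to $\Phi^\downarrow$, $\Phi^\Downarrow$, $\Phi^{\downarrow\Downarrow}$ by Lemma \ref{lem:dagger} and Lemma \ref{lem:associated}. Similarly Lemma \ref{lem:E0xis0} and \eqref{eq:E0Es0E0} apply verbatim to each relative after the evident relabeling of $E_0, E_d, E^*_0, E^*_d$. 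So applying \eqref{eq:00s} to $\Phi^\downarrow$ (whose $E^*_0$ is our $E^*_d$) and using \eqref{eq:nud} for $\nu^\downarrow$ gives \eqref{eq:0ds}; applying it to $\Phi^\Downarrow$ (whose $E_0$ is our $E_d$) and using \eqref{eq:nuD} for $\nu^\Downarrow$ gives \eqref{eq:d0s}; and applying it to $\Phi^{\downarrow\Downarrow}$ and using \eqref{eq:nuD} for $\nu^{\downarrow\Downarrow}$ gives \eqref{eq:dds}. The computation itself is short; the only step requiring care is this bookkeeping for the relatives — identifying which idempotent plays the role of $E_0$ and of $E^*_0$, and confirming that a single bilinear form is used throughout.
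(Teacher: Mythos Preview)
Your argument is correct. The computation of $E_0 E^*_0 E_0 v_0$ via Lemma~\ref{lem:E0xis0} and comparison with \eqref{eq:E0Es0E0} cleanly yields $\nu = \b{v_0,v_0}\b{v^*_0,v^*_0}/\b{v_0,v^*_0}^2$, and then \eqref{eq:nud} gives \eqref{eq:00s}. The passage to the relatives $\Phi^\downarrow$, $\Phi^\Downarrow$, $\Phi^{\downarrow\Downarrow}$ is also sound: since $\dagger$ (and hence the form $\b{\;,\,}$ up to scalar) depends only on the pair $A,A^*$, it is unchanged under $\downarrow$, $\Downarrow$, and the role of $(v_0,v^*_0)$ is taken over by $(v_0,v^*_d)$, $(v_d,v^*_0)$, $(v_d,v^*_d)$ respectively, while the values $\nu^\downarrow$, $\nu^\Downarrow$, $\nu^{\downarrow\Downarrow}$ from \eqref{eq:nud}, \eqref{eq:nuD} produce exactly the right-hand sides of \eqref{eq:0ds}--\eqref{eq:dds}.

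As for comparison: the paper does not give its own proof of this lemma; it simply imports the result from \cite[Corollary~8.3, Lemma~9.6]{NT:maps}. Your proof is therefore not a variant of the paper's argument but a genuine supplement---a short, self-contained derivation using only tools already assembled in this paper (Lemma~\ref{lem:E0xis0}, \eqref{eq:E0Es0E0}, and the formulas \eqref{eq:nud}, \eqref{eq:nuD}). This is a nice internalization of an external citation.
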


\begin{note}    \label{note:rel}    \samepage
\ifDRAFT {\rm note:rel}. \fi
By \eqref{eq:00s}--\eqref{eq:dds} the scalars \eqref{eq:second} are determined
up to sign by the scalars \eqref{eq:first} and the parameter array.
\end{note}

Our next goal is to describe 
how the bases in Lemmas \ref{lem:0sD}, \ref{lem:D0s}, \ref{lem:0D} are related.
The bases in Lemma \ref{lem:0sD} are related as follows.

\begin{lemma}  \label{lem:trans1b}    \samepage
\ifDRAFT {\rm lem:trans1b}. \fi
For $0 \leq i \leq d$,
\begin{align}
\tau^*_{d-i}(A^*)  v_0
 &= \frac{\tau^*_d(\th^*_d) } {\vphi_d \cdots \vphi_{d-i+1} } \,
     \frac{ \b{v_0,v^*_d} } { \b{v^*_d,v^*_d} }  \, \eta_i(A)  v^*_d,         \label{eq:tausd-iAsv02}
\\
\eta^*_{d-i}(A^*)  v_0
 &= \frac{\eta^*_d(\th^*_0) }  { \phi_1 \cdots \phi_i }  \,
     \frac{ \b{v_0,v^*_0} } { \b{v^*_0,v^*_0} } \,   \eta_i(A)  v^*_0,               \label{eq:etasd-iAsv02}
\\
\tau^*_{d-i}(A^*)  v_d
 &= \frac{ \tau^*_d(\th^*_d) } { \phi_d \cdots \phi_{d-i+1} }  \,
      \frac{ \b{v_d,v^*_d} } { \b{v^*_d,v^*_d} } \,  \tau_i(A)  v^*_d,   \label{eq:tausd-iAsvd2}
\\
\eta^*_{d-i}(A^*)  v_d
 &=  \frac{ \eta^*_d(\th^*_0) } { \vphi_1 \cdots \vphi_i }  \,
      \frac{ \b{v_d,v^*_0} }  { \b{v^*_0, v^*_0} }  \, \tau_i(A)  v^*_0.      \label{eq:etasd-iAsvd2}
\end{align}
\end{lemma}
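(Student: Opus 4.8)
The plan is to prove each of the four displayed identities by the same three-step strategy. In each case, the vectors on the two sides both lie in the same one-dimensional component $U_i$ of one of the decompositions from Lemma \ref{lem:0sD}; by that lemma each side is a \emph{basis} of $U_i$, hence nonzero, so the two sides are scalar multiples of each other. It therefore suffices to (a) identify the relevant decomposition and apply Lemma \ref{lem:0sD} to see that both sides span the same $U_i$, (b) compute the proportionality constant by applying a single convenient projection, and (c) simplify that constant using the scalar relations of this section. I would organize the four proofs around the four rows of the tables in Lemmas \ref{lem:0sD}, \ref{lem:D0s}: for instance, \eqref{eq:tausd-iAsv02} concerns the decomposition $[D^*0]$, whose $i^{\text{th}}$ component has bases $\eta_i(A)v^*_d$ and $\tau^*_{d-i}(A^*)v_0$; \eqref{eq:etasd-iAsv02} concerns $[0^*0]$ with bases $\eta_i(A)v^*_0$ and $\eta^*_{d-i}(A^*)v_0$; \eqref{eq:tausd-iAsvd2} concerns $[D^*D]$ with bases $\tau_i(A)v^*_d$ and $\tau^*_{d-i}(A^*)v_d$; and \eqref{eq:etasd-iAsvd2} concerns $[0^*D]$ with bases $\tau_i(A)v^*_0$ and $\eta^*_{d-i}(A^*)v_d$.

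For step (b), the cleanest route is to pin down the scalar at the endpoint $i=0$ or $i=d$, where $\tau^*_0(A^*)=\eta^*_0(A^*)=I$ or $\tau_d(A)=\tau_d(\th_d)E_d$, $\eta_d(A)=\eta_d(\th_0)E_0$ etc., and then transport to general $i$ using the raising/lowering action of $A$ and $A^*$ on the split decomposition. Concretely, one knows from Section \ref{sec:parray} (and its relatives, via Definition \ref{def:fg}) how $\tau^*_i(A^*)$, $\eta^*_i(A^*)$ move a vector up or down the chain of split components by factors expressible in $\{\vphi_i\}$, $\{\phi_i\}$. Alternatively, and perhaps more uniformly, one can apply a projection $E_0$ or $E^*_0$ to both sides and invoke Lemma \ref{lem:E0xis0} together with Lemma \ref{lem:tauiA} (equation \eqref{eq:Es0tauitausjE0}) to evaluate $E^*_0\tau_i(A)\tau^*_j(A^*)E_0$; this converts the vector identity into a scalar identity among the inner products \eqref{eq:first}, \eqref{eq:second} and the parameter array. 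Then step (c) is to substitute the known values of $E_0v^*_d$, $E^*_0v_0$, etc. from \eqref{eq:vs0}--\eqref{eq:vd} and clean up; the ratios $\b{v_0,v^*_d}/\b{v^*_d,v^*_d}$ and $\b{v_0,v^*_0}/\b{v^*_0,v^*_0}$ that appear in the statement come out directly from \eqref{eq:v0}, \eqref{eq:vd}, \eqref{eq:vsd}.

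The main obstacle I anticipate is bookkeeping the action of the $\downarrow$, $\Downarrow$, $*$ symmetries correctly: the four identities are related to each other by relatives of $\Phi$, and one must be careful that, e.g., $\tau^*_d(\th^*_d)$ rather than $\eta^*_d(\th^*_0)$ appears in \eqref{eq:tausd-iAsv02} and that the product $\vphi_d\cdots\vphi_{d-i+1}$ (not $\phi_1\cdots\phi_i$ or $\vphi_1\cdots\vphi_i$) is the right normalizing factor. A safe way to control this is to prove \eqref{eq:tausd-iAsvd2} (the one attached to $[D^*D]$, closest in form to the split-basis setup of Section \ref{sec:parray}) carefully from first principles — expressing both $\tau_i(A)v^*_d$ and $\tau^*_{d-i}(A^*)v_d$ in terms of a common split-type basis for the relevant relative of $\Phi$ and comparing the leading coefficients — and then obtain the other three by applying Definition \ref{def:fg} with $g\in\{\downarrow,\Downarrow,\downarrow\Downarrow\}$ to $\Phi$, using Lemma \ref{lem:Phis} to track how the parameter array transforms. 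Once the first identity is in hand, the remaining three should follow by this symmetry transport with only routine verification that the inner-product ratios and the $\{\vphi_i\},\{\phi_i\}$ products transform as claimed.
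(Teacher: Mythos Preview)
Your plan is correct and will work. The key observation that both sides of each identity span the same one-dimensional component $U_i$ (via Lemma \ref{lem:0sD}) reduces each equation to determining a single scalar, and your endpoint-plus-induction scheme along the split decomposition does determine it: for instance, in \eqref{eq:etasd-iAsvd2} one checks the case $i=0$ using $\eta^*_d(A^*)=\eta^*_d(\th^*_0)E^*_0$ and \eqref{eq:vd}, and then the recursion $\eta^*_{d-i}(A^*)=(A^*-\th^*_{i+1}I)\eta^*_{d-i-1}(A^*)$ together with $(A^*-\th^*_{i+1}I)u_{i+1}=\vphi_{i+1}u_i$ on the $\Phi$-split basis $u_i=\tau_i(A)v^*_0$ gives exactly the factor $1/(\vphi_1\cdots\vphi_i)$. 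Transporting by $\downarrow,\Downarrow,\downarrow\Downarrow$ via Lemma \ref{lem:Phis} then yields the other three, as you propose.

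The paper takes a shorter but less self-contained route. It does not argue via Lemma \ref{lem:0sD} at all; instead it imports the operator identity
\[
\tau^*_{d-i}(A^*)\,E_0 \;=\; \frac{\tau^*_d(\th^*_d)}{\vphi_d\cdots\vphi_{d-i+1}}\,\eta_i(A)\,E^*_d\,E_0
\]
from \cite[Theorem 5.2]{NT:unit}, applies both sides to $v_0$, and simplifies $E^*_d v_0$ using \eqref{eq:v0} to obtain \eqref{eq:tausd-iAsv02} directly. The remaining three identities are then obtained, exactly as you suggest, by applying the first to $\Phi^\downarrow$, $\Phi^\Downarrow$, $\Phi^{\downarrow\Downarrow}$. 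So the symmetry-transport half of your plan matches the paper; the difference is only in how the seed identity is established---your approach derives it internally from the split-basis mechanics, while the paper outsources that step to an already-proved operator identity.
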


\begin{proof}
We first show \eqref{eq:tausd-iAsv02}.
By \cite[Theorem 5.2]{NT:unit},
\begin{equation}
  \tau^*_{d-i}(A^*) E_0 
  = \frac{\tau^*_d(\th^*_d)} { \vphi_d \cdots \vphi_{d-i+1} } \, \eta_i(A) E^*_d E_0.       \label{eq:formula1}
\end{equation}
In this line, apply each side to $v_0$ and use $E_0 v_0 = v_0$.
Simplify the result using the equation on the right in \eqref{eq:v0}.
This gives \eqref{eq:tausd-iAsv02}.
To get the remaining equations, apply \eqref{eq:tausd-iAsv02} to 
$\Phi^\downarrow$, $\Phi^\Downarrow$, $\Phi^{\downarrow\Downarrow}$,
and use Lemma \ref{lem:Phis}.
\end{proof}

The bases in Lemma \ref{lem:D0s} are related as follows.

\begin{lemma}  \label{lem:trans1c}    \samepage
\ifDRAFT {\rm lem:trans1c}. \fi
For $0 \leq i \leq d$,
\begin{align}
\tau_{d-i}(A)  v^*_0
 &=  \frac{ \tau_d(\th_d) } { \vphi_d \cdots \vphi_{d-i+1} }   \,
      \frac{ \b{v_d,v^*_0} } {\b{v_d,v_d} }  \, \eta^*_i (A^*)  v_d,     \label{eq:taud-iAvs02}
\\
\eta_{d-i}(A)  v^*_0 
 &= \frac{ \eta_d(\th_0) } { \phi_d \cdots \phi_{d-i+1} }   \,
    \frac{ \b{v_0,v^*_0} } { \b{v_0,v_0} }  \, \eta^*_i(A^*)   v_0,      \label{eq:etad-iAvs02}
\\
\tau_{d-i}(A)  v^*_d
 &= \frac{\tau_d(\th_d) }   {\phi_1 \cdots \phi_i  } \,
      \frac{ \b{v_d,v^*_d} } { \b{v_d,v_d} }  \,  \tau^*_i(A^*)  v_d,             \label{eq:taud-iAvsd2}
\\
\eta_{d-i}(A)  v^*_d
 &=  \frac{\eta_d(\th_0) } { \vphi_1 \cdots \vphi_i }   \,
      \frac{ \b{v_0, v^*_d} } { \b{v_0,v_0} }  \, \tau^*_i(A^*)  v_0.           \label{eq:etad-iAvsd2}
\end{align}
\end{lemma}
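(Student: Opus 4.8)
The plan is to deduce Lemma~\ref{lem:trans1c} from Lemma~\ref{lem:trans1b} by applying the map $*$. Recall $\Phi^* = (A^*;\{E^*_i\}_{i=0}^d; A;\{E_i\}_{i=0}^d)$, so in passing from $\Phi$ to $\Phi^*$ the roles of $A$ and $A^*$ are interchanged, the roles of $\th_i$ and $\th^*_i$ are interchanged, and hence $\tau_i \leftrightarrow \tau^*_i$ and $\eta_i \leftrightarrow \eta^*_i$. By Lemma~\ref{lem:Phis}(i) the first split sequence $\{\vphi_i\}_{i=1}^d$ of $\Phi$ is also the first split sequence of $\Phi^*$, while the second split sequence $\{\phi_i\}_{i=1}^d$ of $\Phi$ becomes $\{\phi_{d-i+1}\}_{i=1}^d$ for $\Phi^*$. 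The vectors playing the role of $v_0,v_d,v^*_0,v^*_d$ for $\Phi^*$ may be taken to be $v^*_0,v^*_d,v_0,v_d$ respectively, since $E_0V\leftrightarrow E^*_0V$ and $E_dV\leftrightarrow E^*_dV$. Finally, the antiautomorphism $\dagger$ of Lemma~\ref{lem:dagger} fixes both $A$ and $A^*$, hence is literally unchanged on passing from $\Phi$ to $\Phi^*$; so the associated bilinear form $\b{\;,\,}$ is the same form, and it is symmetric by the remarks in Section~\ref{sec:anti}.

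With these identifications I would simply apply each of the four identities \eqref{eq:tausd-iAsv02}--\eqref{eq:etasd-iAsvd2} of Lemma~\ref{lem:trans1b} to $\Phi^*$. For example, applying \eqref{eq:tausd-iAsv02} to $\Phi^*$ gives $\tau_{d-i}(A)v^*_0$ on the left, and on the right $\tau_d(\th_d)(\vphi_d\cdots\vphi_{d-i+1})^{-1}$ times $\b{v^*_0,v_d}/\b{v_d,v_d}$ times $\eta^*_i(A^*)v_d$; rewriting $\b{v^*_0,v_d}=\b{v_d,v^*_0}$ by symmetry yields exactly \eqref{eq:taud-iAvs02}. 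Applying \eqref{eq:etasd-iAsv02} to $\Phi^*$ gives \eqref{eq:etad-iAvs02} (here the denominator $\phi_1\cdots\phi_i$ for $\Phi$ becomes $\phi_d\phi_{d-1}\cdots\phi_{d-i+1}$ for $\Phi^*$); applying \eqref{eq:tausd-iAsvd2} to $\Phi^*$ gives \eqref{eq:taud-iAvsd2} (now $\phi_d\cdots\phi_{d-i+1}$ becomes $\phi_1\cdots\phi_i$); and applying \eqref{eq:etasd-iAsvd2} to $\Phi^*$ gives \eqref{eq:etad-iAvsd2}. In each case the inner-product ratio on the right is invariant under $*$ once $\b{u,v}=\b{v,u}$ is invoked.

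A direct alternative, more closely parallel to the proof of Lemma~\ref{lem:trans1b}, is to apply \cite[Theorem~5.2]{NT:unit} to $\Phi^*$ to get $\tau_{d-i}(A)E^*_0 = \tau_d(\th_d)(\vphi_d\cdots\vphi_{d-i+1})^{-1}\,\eta^*_i(A^*)E_dE^*_0$, then apply both sides to $v^*_0$, use $E^*_0v^*_0=v^*_0$, and simplify $E_dv^*_0$ by the right-hand equation of \eqref{eq:vs0}; this produces the first identity of Lemma~\ref{lem:trans1c}, and the remaining three follow by applying it to $\Phi^\downarrow$, $\Phi^\Downarrow$, $\Phi^{\downarrow\Downarrow}$ and using Lemma~\ref{lem:Phis}. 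Either route is essentially bookkeeping; the only place requiring care — and hence the main (minor) obstacle — is keeping straight how the split sequences $\{\vphi_i\}$, $\{\phi_i\}$ permute under the $D_4$ action and invoking the symmetry of $\b{\;,\,}$ to match the inner-product factors.
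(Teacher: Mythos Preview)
Your proof is correct and is exactly the paper's approach: the paper's own proof of Lemma~\ref{lem:trans1c} is the single sentence ``Apply Lemma~\ref{lem:trans1b} to $\Phi^*$, and use Lemma~\ref{lem:Phis}.'' Your careful tracking of how $\tau_i\leftrightarrow\tau^*_i$, $\eta_i\leftrightarrow\eta^*_i$, $\vphi_i\mapsto\vphi_i$, $\phi_i\mapsto\phi_{d-i+1}$, $v_0\leftrightarrow v^*_0$, $v_d\leftrightarrow v^*_d$ under $*$, together with the symmetry of $\b{\;,\,}$, is precisely the bookkeeping the paper leaves implicit.
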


\begin{proof}
Apply Lemma \ref{lem:trans1b} to $\Phi^*$, and use Lemma \ref{lem:Phis}.
\end{proof}

The bases in Lemma \ref{lem:0D} are related as follows.

\begin{lemma}  \label{lem:trans3b}    \samepage
\ifDRAFT {\rm lem:trans3b}. \fi
For $0 \leq i \leq d$,
\begin{align}
E^*_i  v_d
&= \frac{\phi_1 \cdots \phi_i } {\vphi_1 \cdots \vphi_i } \,
     \frac{ \b{v_d,v^*_0} } { \b{v_0,v^*_0} } \, E^*_i  v_0,                     \label{eq:Esivd2}
\\
E^*_{d-i}  v_d
 &= \frac{\vphi_d \cdots \vphi_{d-i+1} } {\phi_d \cdots \phi_{d-i+1} } \,
      \frac{ \b{v_d,v^*_d} } { \b{v_0,v^*_d} }     \, E^*_{d-i}  v_0,           \label{eq:Esd-ivd2}
\\
E_i  v^*_d
&= \frac{\phi_d \cdots \phi_{d-i+1} } {\vphi_1 \cdots \vphi_i } \,
     \frac{ \b{v_0,v^*_d} } { \b{v_0,v^*_0} }   \, E_i  v^*_0,   \label{eq:Eivsd2}
\\
E_{d-i}  v^*_d
 &= \frac{\vphi_d \cdots \vphi_{d-i+1} }   {\phi_1 \cdots \phi_i } \,
      \frac{ \b{v_d,v^*_d} } { \b{v_d,v^*_0} }        \, E_{d-i}  v^*_0.        \label{eq:Ed-ivsd2}
\end{align}
\end{lemma}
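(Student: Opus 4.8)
The plan is to derive all four identities in Lemma \ref{lem:trans3b} by the same mechanism used in the proof of Lemma \ref{lem:trans1b}: express one of the two bases for the $i^\text{th}$ component of the relevant decomposition in Lemma \ref{lem:0D} in terms of the other, applying the vectors $v_0$, $v_d$, $v^*_0$, $v^*_d$ and using Lemma \ref{lem:E0xis0} to convert the resulting ``mixed'' expressions (like $E^*_0 v_d$) into scalar multiples of the appropriate base vector. Concretely, to prove \eqref{eq:Esivd2} I would start from the rank-one nature of the relevant projections on the split-type decomposition $[0^*D^*]$: both $E^*_i v_0$ and $E^*_i v_d$ span $E^*_iV$, so they differ by a scalar, and the scalar is pinned down by applying $E^*_0$ and $E^*_d$ (or equivalently by pairing against $v^*_0$, $v^*_d$ using the bilinear form) and invoking \eqref{eq:v0}, \eqref{eq:vd}. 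The factor $\phi_1\cdots\phi_i/(\vphi_1\cdots\vphi_i)$ should emerge from combining \eqref{eq:newrel} with the ratios $\b{v_d,v^*_0}/\b{v^*_0,v^*_0}$ etc.\ coming out of Lemma \ref{lem:E0xis0}.

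A cleaner route, which I would actually pursue, is to get \eqref{eq:Esivd2} directly from Lemma \ref{lem:E0xis0}. From \eqref{eq:v0} we have $E^*_0 v_0 = \b{v_0,v^*_0}\b{v^*_0,v^*_0}^{-1} v^*_0$ and from \eqref{eq:vd} we have $E^*_0 v_d = \b{v_d,v^*_0}\b{v^*_0,v^*_0}^{-1} v^*_0$; so $E^*_0 v_d = \bigl(\b{v_d,v^*_0}/\b{v_0,v^*_0}\bigr) E^*_0 v_0$. This handles the case $i=0$ and already exhibits one of the ratios appearing in the statement. For general $i$ the idea is to relate $E^*_i$ to $E^*_0$ via the split-decomposition machinery, or more efficiently to apply the $i=0$ identity to a suitable relative of $\Phi$. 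Indeed, under $\downarrow$ and $\Downarrow$ the vectors $v_0,v_d,v^*_0,v^*_d$ get permuted among themselves (since these operations swap $E_0\leftrightarrow E_d$ and $E^*_0\leftrightarrow E^*_d$), and the idempotent $E^*_i$ of $\Phi$ equals the idempotent $E^*_{d-i}$ of $\Phi^\downarrow$; so one of the four identities, once established, yields the other three by applying it to $\Phi^\downarrow$, $\Phi^\Downarrow$, $\Phi^{\downarrow\Downarrow}$ and rewriting via Lemma \ref{lem:Phis} and \eqref{eq:phi}.

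So the key steps, in order, are: (1) prove \eqref{eq:Esivd2} — express $E^*_i v_d$ as a scalar times $E^*_i v_0$ since both span $E^*_iV$ (nonzero by \cite[Lemma 10.2]{T:survey}), and determine the scalar; the most direct way is to apply $E_0$ or $E_d$ to both sides and use Lemma \ref{lem:E0xis0}, or to pair with $v_0$ against the bilinear form, reducing to the $i$-independent ratio $\b{v_d,v^*_0}/\b{v_0,v^*_0}$ together with a factor $\phi_1\cdots\phi_i/(\vphi_1\cdots\vphi_i)$ that must be extracted from the known action of $A$ on these bases (e.g.\ from \cite[Theorem 11.2]{T:24points}) or from \eqref{eq:Es0tauitausjE0} and its relatives; (2) derive \eqref{eq:Esd-ivd2} by applying \eqref{eq:Esivd2} to $\Phi^\downarrow$ (which swaps $E^*_i\leftrightarrow E^*_{d-i}$ and $\vphi\leftrightarrow\phi$ appropriately per Lemma \ref{lem:Phis}(ii)), being careful to track how $v^*_0,v^*_d$ are relabelled; (3) derive \eqref{eq:Eivsd2} by applying \eqref{eq:Esivd2} to $\Phi^*$ and using Lemma \ref{lem:Phis}(i); (4) derive \eqref{eq:Ed-ivsd2} by applying \eqref{eq:Eivsd2} to $\Phi^\downarrow$ or \eqref{eq:Esd-ivd2} to $\Phi^*$.

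The main obstacle I anticipate is step (1): correctly identifying the $i$-dependent scalar $\phi_1\cdots\phi_i/(\vphi_1\cdots\vphi_i)$. The $i=0$ case is immediate from Lemma \ref{lem:E0xis0}, but propagating to general $i$ requires either a known formula relating $E^*_i$ to $E^*_0$ (analogous to \eqref{eq:formula1}) or an inductive argument using the tridiagonal action of $A$ on the bases $\{E_i v^*_0\}$ and $\{E_i v^*_d\}$ from \cite[Theorem 11.2]{T:24points}. I expect the cleanest exposition cites the appropriate transition-matrix or action formula from \cite{T:24points} or \cite{NT:maps} to get \eqref{eq:Esivd2} outright, after which steps (2)--(4) are purely formal applications of the $D_4$ symmetry via Lemma \ref{lem:Phis} and \eqref{eq:phi}; the only real care needed there is bookkeeping of the relabelling of the four distinguished vectors under $*$, $\downarrow$, $\Downarrow$.
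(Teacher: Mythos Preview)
Your plan is essentially the same as the paper's, and you have correctly identified both the key obstacle (extracting the $i$-dependent factor $\phi_1\cdots\phi_i/(\vphi_1\cdots\vphi_i)$) and its source (a product formula from \cite{NT:maps}). To be specific, the paper invokes \cite[Lemma 7.2]{NT:maps}, which gives
\[
  E_0 E^*_d E_d E^*_i = \frac{\vphi_1\cdots\vphi_d}{\tau_d(\th_d)\tau^*_d(\th^*_d)}\cdot
  \frac{\phi_1\cdots\phi_i}{\vphi_1\cdots\vphi_i}\, E_0 E^*_i,
\]
applies $\dagger$ to get $E^*_i E_d E^*_d E_0$ in terms of $E^*_i E_0$, and then evaluates both sides on $v_0$; combining with the identity $E^*_i E_d E^*_d v_0 = \tfrac{\b{v_0,v^*_d}\b{v_d,v^*_d}}{\b{v_d,v_d}\b{v^*_d,v^*_d}}E^*_i v_d$ (obtained from Lemma \ref{lem:E0xis0} exactly as you suggest) and simplifying via \eqref{eq:newrel}, \eqref{eq:dds} yields \eqref{eq:Esivd2}. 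So your anticipated ``cleanest exposition'' is precisely what the paper does.

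One small difference: for \eqref{eq:Esd-ivd2} the paper does not pass to $\Phi^\downarrow$ but simply replaces $i$ by $d-i$ in \eqref{eq:Esivd2} and then uses \eqref{eq:newrel} to rewrite the resulting ratio of inner products. Your $\Phi^\downarrow$ route is equally valid and arguably more systematic; it just trades the single algebraic identity \eqref{eq:newrel} for the bookkeeping of relabelling $v^*_0 \leftrightarrow v^*_d$ and $\vphi \leftrightarrow \phi$ under $\downarrow$. Steps (3) and (4) match the paper exactly.
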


\begin{proof}
We first show \eqref{eq:Esivd2}.
In the equation on the right in  \eqref{eq:v0}, multiply each side on the left by $E^*_i E_d$.
Simplify the result using the equation on the right in \eqref{eq:vsd}. This gives
\begin{equation}
 E^*_i E_d E^*_d v_0 =
  \frac{\b{v_0, v^*_d} \b{v_d,v^*_d} } { \b{v_d, v_d} \b{v^*_d, v^*_d} } \, E^*_i v_d.  \label{eq:Esivdaux1}
\end{equation}
By \cite[Lemma 7.2]{NT:maps},
\begin{equation}
 E_0 E^*_d E_d E^*_i  =
   \frac{\vphi_1 \cdots \vphi_d } { \tau_d(\th_d) \tau^*_d (\th^*_d) } \,
   \frac{\phi_1 \cdots \phi_i } { \vphi_1 \cdots \vphi_i } \, E_0 E^*_i.                \label{eq:formula2pre}
\end{equation}
Applying $\dagger$ to \eqref{eq:formula2pre} we obtain
\begin{equation}
  E^*_i E_d E^*_d E_0 =
   \frac{\vphi_1 \cdots \vphi_d } { \tau_d(\th_d) \tau^*_d (\th^*_d) } \,
   \frac{\phi_1 \cdots \phi_i } { \vphi_1 \cdots \vphi_i } \, E^*_i E_0.             \label{eq:formula2}
\end{equation}
In this line, apply each side to $v_0$, and use $E_0 v_0 = v_0$.
Comparing the result with \eqref{eq:Esivdaux1} we find that $E^*_i v_d$ is equal to
\begin{equation}
    \frac{\vphi_1 \cdots \vphi_d} {\tau_d(\th_d) \tau^*_d(\th^*_d) } \,
    \frac{\b{v_d,v_d} \b{v^*_d,v^*_d} } { \b{v_0,v^*_d} \b{v_d,v^*_d} }             \label{eq:Esivdaux2}
\end{equation}
times
\[
     \frac{\phi_1 \cdots \phi_i} {\vphi_1 \cdots \vphi_i } \, E^*_i v_0.
\]
By \eqref{eq:newrel} and  \eqref{eq:dds},
the line \eqref{eq:Esivdaux2} is equal to
\[
     \frac{ \b{v_d,v^*_0} } { \b{v_0,v^*_0} }.
\]
By these comments we obtain \eqref{eq:Esivd2}.
To get  \eqref{eq:Esd-ivd2},
replace $i$ with $d-i$ in \eqref{eq:Esivd2} and use \eqref{eq:newrel}.
To get \eqref{eq:Eivsd2}, apply \eqref{eq:Esivd2} to $\Phi^*$, and use Lemma \ref{lem:Phis}.
The line \eqref{eq:Ed-ivsd2} is similarly obtained by applying \eqref{eq:Esd-ivd2} to $\Phi^*$.
\end{proof}

\section{The action of $T$ on the 24 bases}
\label{sec:Taction}

Recall the Leonard system $\Phi$ from \eqref{eq:Phi2} and 
the element $T$  from Definition \ref{def:T}.
Consider the $24$ bases from \eqref{eq:tauivs0}--\eqref{eq:Esiv0}.
In this section we describe how $T$ acts on these bases,
under the assumption that $\Phi$ is self-dual.

\begin{lemma}     \label{lem:actT}    \samepage
\ifDRAFT {\rm lem:actT}. \fi
Assume that $\Phi$ is self-dual.
Then
\begin{align*}
 T v_0 &= \alpha v^*_0, \qquad\qquad \;
 T v_d = \beta v^*_d,  
\\
 T v^*_0 &= \alpha^* v_0, \qquad\qquad 
 T v^*_d = \beta^* v_d,  
\end{align*}
where
\begin{align*}
\alpha &= \frac{\vphi_1 \cdots \vphi_d}{\tau_d(\th_d)} \, 
              \frac{ \b{v_0,v^*_0} } { \b{v^*_0,v^*_0} },
&
\beta &=  \frac{\vphi_1 \cdots \vphi_d} { \eta_d(\th_0) } \, 
             \frac{ \b{v_d,v^*_d} } { \b{v^*_d, v^*_d} },
\\
\alpha^* &= \frac{\vphi_1 \cdots \vphi_d} { \tau_d(\th_d)} \,
                 \frac{ \b{v_0,v^*_0} } { \b{v_0,v_0} },        
&
\beta^* &= \frac{\vphi_1 \cdots \vphi_d} {\eta_d(\th_0)}  \,
             \frac{ \b{v_d, v^*_d} } { \b{v_d,v_d} }.
\end{align*}
\end{lemma}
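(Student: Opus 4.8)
The plan is to compute $T v_0$, $T v_d$, $T v^*_0$, $T v^*_d$ directly from the product formulas established in Section~\ref{sec:formulaT}, together with the projection identities of Lemma~\ref{lem:E0xis0}. The key observation is that the vectors $v_0$, $v^*_0$ are (up to scalar) images of the idempotents $E_0$, $E^*_0$, so the already-proven formulas for $T E_0$ and $T E^*_0$ will, upon applying them to suitable vectors and using $E_0 v_0 = v_0$, $E^*_0 v^*_0 = v^*_0$, yield the action of $T$ on $v_0$ and $v^*_0$. The action on $v_d$ and $v^*_d$ will then follow by the standard device of applying the $d=0$ results to a relative of $\Phi$; concretely, $v_d \in E_d V$ plays the role of $v_0$ for $\Phi^\Downarrow$, and $v^*_d \in E^*_d V$ plays the role of $v^*_0$ for $\Phi^\downarrow$.

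In detail, first I would show $T v^*_0 = \alpha^* v_0$. Start from \eqref{eq:TE0}, namely $T E_0 = \frac{\vphi_1 \cdots \vphi_d}{\tau_d(\th_d)} E^*_0 E_0$, apply both sides to $v^*_0$, and use $E_0 v^*_0 = \frac{\b{v_0,v^*_0}}{\b{v_0,v_0}} v_0$ from \eqref{eq:vs0}; since $E^*_0 v_0 = \frac{\b{v_0,v^*_0}}{\b{v^*_0,v^*_0}} v^*_0$ from \eqref{eq:v0}, wait — that is not quite right, so instead I would apply $T E_0$ to $v^*_0$ on the left: $T E_0 v^*_0$ requires knowing $E_0 v^*_0$, which is a multiple of $v_0$, and then $T v_0$. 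So the cleaner route is: apply \eqref{eq:TE0} to $v_0$ to get $T v_0 = T E_0 v_0 = \frac{\vphi_1 \cdots \vphi_d}{\tau_d(\th_d)} E^*_0 E_0 v_0 = \frac{\vphi_1 \cdots \vphi_d}{\tau_d(\th_d)} E^*_0 v_0$, and then invoke \eqref{eq:v0} to rewrite $E^*_0 v_0 = \frac{\b{v_0,v^*_0}}{\b{v^*_0,v^*_0}} v^*_0$; this gives exactly $T v_0 = \alpha v^*_0$ with $\alpha$ as stated. Similarly, $T v^*_0 = T E^*_0 v^*_0$, and applying $T^* E^*_0$ from \eqref{eq:TsEs0} — no: I need $T E^*_0$, not $T^* E^*_0$; but by Theorem~\ref{thm:main} we have $T = T^*$ when $\Phi$ is self-dual, so $T E^*_0 = T^* E^*_0 = \frac{\vphi_1 \cdots \vphi_d}{\tau^*_d(\th^*_d)} E_0 E^*_0$, and then $\tau^*_d(\th^*_d) = \tau_d(\th_d)$ by \eqref{eq:thths}, and $E_0 v^*_0 = \frac{\b{v_0,v^*_0}}{\b{v_0,v_0}} v_0$ from \eqref{eq:vs0}, giving $T v^*_0 = \alpha^* v_0$.

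For $T v_d$ and $T v^*_d$, I would apply the statement already proven (for $T v_0$, $T v^*_0$) to the relative $\Phi^{\downarrow\Downarrow}$, which is again self-dual by Lemma~\ref{lem:selfdualparam}, and for which $T$ is its own $T$-element transported appropriately; here $E_d$, $E^*_d$, $v_d$, $v^*_d$ take the roles of $E_0$, $E^*_0$, $v_0$, $v^*_0$. Under this substitution $\tau_d(\th_d)$ becomes $\eta_d(\th_0)$ (the leading-parameter swap recorded in Lemma~\ref{lem:Phis}), while $\vphi_1 \cdots \vphi_d$ is unchanged since $\Phi^{\downarrow\Downarrow}$ has first split sequence $\{\vphi_i\}_{i=1}^d$; the inner products transform as $\b{v_0,v^*_0} \mapsto \b{v_d,v^*_d}$ etc. This yields $T v_d = \beta v^*_d$ and $T v^*_d = \beta^* v_d$ with $\beta$, $\beta^*$ exactly as claimed. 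The main obstacle is bookkeeping: one must check that the same element $T$ attached to $\Phi$ does act in the stated way under the relative substitution — i.e., that $T$ restricted to the role it plays for $\Phi^{\downarrow\Downarrow}$ really is given by Definition~\ref{def:T} applied to that relative, which follows because $T$ is canonically the operator giving the duality and the duality is intrinsic to the Leonard pair $A,A^*$ (Lemma~\ref{lem:selfdualpair}), hence common to all four associated Leonard systems. Once that identification is in place, every remaining step is a substitution into formulas already proven, so no genuinely new computation is needed.
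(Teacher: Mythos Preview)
Your computations of $T v_0$ and $T v^*_0$ are correct and match the paper's argument: apply one of the product formulas from Lemmas~\ref{lem:TEs0}, \ref{lem:TE0} (together with $T=T^*$ from Theorem~\ref{thm:main} and the self-duality relation $\tau^*_d(\th^*_d)=\tau_d(\th_d)$) to the appropriate vector, then invoke Lemma~\ref{lem:E0xis0}. The paper's ``in a similar way'' for the remaining two identities is most naturally read as repeating this directly: for instance, using $T=T^\dagger=\sum_i \tau^*_i(A^*) E_d E^*_0 \eta_{d-i}(A)$, one finds $T E_d = \tau^*_d(A^*) E_d E^*_0 E_d$ (only the $i=d$ term survives since $\eta_{d-i}(\th_d)=\delta_{i,d}$), and then \eqref{eq:E0Ed}, \eqref{eq:E0Es0E0}, \eqref{eq:nuD}, \eqref{eq:vd} finish the job.

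Your alternative route for $T v_d$, $T v^*_d$ via the relative $\Phi^{\downarrow\Downarrow}$ is viable, but the justification you give for $T^{\downarrow\Downarrow}=T$ has a gap. Saying that both operators give the duality $A\leftrightarrow A^*$ (which is intrinsic to the pair) only shows they are \emph{proportional}; the normalization of $T$ in Definition~\ref{def:T} is tied to the particular Leonard system, not just the pair. To get equality you need a further argument: unwinding Definition~\ref{def:T} for $\Phi^{\downarrow\Downarrow}$ (where $E^*_0\mapsto E^*_d$, $E_d\mapsto E_0$, $\eta\leftrightarrow\tau$, $\eta^*\leftrightarrow\tau^*$) gives exactly $T^{\downarrow\Downarrow}=\sum_i \tau_{d-i}(A) E^*_d E_0 \eta^*_i(A^*)=(T^*)^\dagger$, and then Theorem~\ref{thm:main} yields $(T^*)^\dagger=T$. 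With this patch your relative argument goes through. (A minor correction: by Lemma~\ref{lem:Phis} the first split sequence of $\Phi^{\downarrow\Downarrow}$ is $\{\vphi_{d-i+1}\}_{i=1}^d$, not $\{\vphi_i\}_{i=1}^d$; the product $\vphi_1\cdots\vphi_d$ is of course the same, so your conclusion is unaffected.)
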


\begin{proof}
By Theorem \ref{thm:main} we have $T=T^*$.
By this and \eqref{eq:TsE0},
\[
   T E_0 = \frac{\vphi_1 \cdots \vphi_d} {\tau_d(\th_d)} \, E^*_0 E_0.
\]
In this line, apply each side to $v_0$.
Simplify the result using $E_0 v_0 = v_0$ to get
\[
   T v_0 = \frac{\vphi_1 \cdots \vphi_d} {\tau_d(\th_d)} \, E^*_0 v_0.
\]
In this line, eliminate $E^*_0 v_0$ using the equation on the left in \eqref{eq:v0} to
get $T v_0 = \alpha v^*_0$.
The remaining equations are obtained in a similar way.
\end{proof}

\begin{proposition}    \label{prop:TEivs0}    \samepage
\ifDRAFT {\rm prop:TEivs0}. \fi
Assume that $\Phi$ is self-dual.
Then for  $0 \leq i \leq d$,
\begin{align*}
 T E^*_i v_0 &= \alpha E_i v^*_0,  &
 T \tau^*_i (A^*) v_0 &= \alpha  \tau_i (A) v^*_0,  &
 T \eta^*_i (A^*) v_0 &= \alpha  \eta_i (A) v^*_0,         
\\
 T E^*_i v_d &= \beta E_i v^*_d, &
 T \tau^*_i (A^*) v_d &= \beta \tau_i (A) v^*_d,  &
 T \eta^*_i (A^*) v_d &= \beta \eta_i (A) v^*_d,   
\\
 T E_i v^*_0 &= \alpha^* E^*_i v_0, &
 T \tau_i (A) v^*_0 &= \alpha^* \tau^*_i (A^*) v_0, &
 T \eta_i (A) v^*_0 &= \alpha^* \eta^*_i (A^*) v_0,  
\\
 T E_i v^*_d &= \beta^* E^*_i v_d, &
 T \tau_i (A) v^*_d &= \beta^* \tau^*_i (A^*) v_d, &
 T \eta_i (A) v^*_d &= \beta^* \eta^*_i (A^*) v_d,    
\end{align*}
where $\alpha$, $\beta$, $\alpha^*$, $\beta^*$ are from Lemma \ref{lem:actT}.
\end{proposition}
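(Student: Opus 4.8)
The plan is to reduce everything to the single relation $T = T^*$ from Theorem~\ref{thm:main} together with Corollary~\ref{cor:EiTTEis}, and then peel off one basis at a time using the relations among the $24$ bases established in Section~\ref{sec:rel24}. First I would establish the twelve displayed formulas in the three ``$E$-columns'', i.e.\ the equations $TE^*_i v_0 = \alpha E_i v^*_0$, $TE^*_i v_d = \beta E_i v^*_d$, $TE_i v^*_0 = \alpha^* E^*_i v_0$, $TE_i v^*_d = \beta^* E^*_i v_d$. For the first of these: by Corollary~\ref{cor:EiTTEis} we have $TE^*_i = E_i T$, so $TE^*_i v_0 = E_i (Tv_0) = E_i(\alpha v^*_0) = \alpha E_i v^*_0$ using Lemma~\ref{lem:actT}. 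The other three $E$-column identities follow in exactly the same way from $E_iT = TE^*_i$, $E^*_iT=TE_i$, applied to $v_0$, $v_d$ and to $v^*_0$, $v^*_d$, again invoking Lemma~\ref{lem:actT} to evaluate $Tv_0$, $Tv_d$, $Tv^*_0$, $Tv^*_d$.

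Next I would handle the ``$\tau$-columns'' and ``$\eta$-columns''. The cleanest route is to observe that $T$ intertwines $A$ with $A^*$ in the sense of Lemma~\ref{lem:ATTAs}: $AT = TA^*$ and $A^*T = TA$. Consequently, for any polynomial $f\in\F[x]$ we have $f(A)\,T = T\,f(A^*)$ and $f(A^*)\,T = T\,f(A)$. Applying this with $f = \tau_i$ and $f = \eta_i$ (note $\tau_i, \eta_i$ are the same polynomials whether formed from $\{\th_j\}$ or $\{\th^*_j\}$ since $\th_j = \th^*_j$ by \eqref{eq:thths}, as $\Phi$ is self-dual), one gets, for instance,
\[
 T\,\tau^*_i(A^*) v_0 = \tau_i(A)\, T v_0 = \tau_i(A)\,\alpha v^*_0 = \alpha\,\tau_i(A) v^*_0,
\]
and likewise $T\,\eta^*_i(A^*) v_0 = \alpha\,\eta_i(A) v^*_0$. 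The same manipulation with $v_d$ in place of $v_0$, and with the roles of starred/unstarred swapped for the bottom two rows, produces the remaining eight identities, all with the scalars $\alpha,\beta,\alpha^*,\beta^*$ inherited directly from Lemma~\ref{lem:actT}. In effect, all $24$ of the displayed equations collapse to ``$T$ conjugates $f(A)\leftrightarrow f(A^*)$ and sends $v_0\mapsto\alpha v^*_0$, $v_d\mapsto\beta v^*_d$, $v^*_0\mapsto\alpha^* v_0$, $v^*_d\mapsto\beta^* v_d$.''

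I expect no serious obstacle here: the proposition is essentially a corollary of Theorem~\ref{thm:main} and Lemma~\ref{lem:actT}, and the only point requiring a moment's care is the observation that the polynomials $\tau_i,\eta_i$ (defined from the eigenvalue sequence) coincide with $\tau^*_i,\eta^*_i$ (defined from the dual eigenvalue sequence) under the self-duality hypothesis \eqref{eq:thths} --- this is what makes $f(A)T = Tf(A^*)$ land on $\tau_i(A)$ rather than some other polynomial in $A$. One should also note that the $24$ vectors appearing really are the basis vectors of Section~\ref{sec:24bases}, so the identities describe $T$ on each of the $24$ bases; but the proof itself does not need that interpretation. A reasonable write-up would prove one representative identity in each of the four rows in full and then say the rest follow by applying the result to the relatives $\Phi^\downarrow$, $\Phi^\Downarrow$, $\Phi^{\downarrow\Downarrow}$, or simply by the symmetry $\Phi\mapsto\Phi^*$ together with the intertwining relations.
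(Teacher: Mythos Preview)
Your proposal is correct and follows essentially the same approach as the paper: use the intertwining relations $TE^*_i = E_iT$ and $f(A^*)T^{-1} = T^{-1}f(A)$ (i.e.\ $T$ conjugates $A^*,E^*_i$ to $A,E_i$) together with Lemma~\ref{lem:actT} for the action on $v_0,v_d,v^*_0,v^*_d$, noting that $\tau_i=\tau^*_i$ and $\eta_i=\eta^*_i$ by self-duality. The paper's proof does exactly this in one line for $TE^*_iv_0$ and dismisses the rest as ``similar''; your references to $T=T^*$ and to Section~\ref{sec:rel24} in the opening are not actually used in your argument and can be dropped.
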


\begin{proof}
By \eqref{eq:EiT} we have  $T E^*_i T^{-1} = E_i$.
By Lemma \ref{lem:actT}, $T v_0 = \alpha v^*_0$.
By these comments
\[
  T E^*_i v_0 = T E^*_i T^{-1} T v_0  = \alpha E_i v^*_0.
\]
We have shown that $T E^*_i v_0 = \alpha E_i v^*_0$.
The remaining equations are obtained in a similar way.
\end{proof}

To motivate the next result we make some comments.
Consider the following bases for $V$:
\begin{equation}
  \{\eta^*_i (A^*) v_0 \}_{i=0}^d,  \qquad                  \label{eq:4bases}
  \{\eta_i(A) v^*_0 \}_{i=0}^d,   \qquad
 \{\tau^*_i(A^*) v_d \}_{i=0}^d,  \qquad
 \{\tau_i(A) v^*_d\}_{i=0}^d. 
\end{equation}
By \cite[Theorem 11.2]{T:24points},
with respect to these bases the matrices representing $A$ and $A^*$
are as follows.
\[
\begin{array}{c|ccc}
\text{\rm basis}  & &  \text{\rm matrix representing $A$} 
           & \text{\rm matrix representing $A^*$}
\\ \hline
\{\eta^*_i(A^*) v_0 \}_{i=0}^d       \rule{0mm}{12ex}
& &
  \begin{pmatrix}
    \th_0 & \phi_d & & & & \text{\bf 0}  \\
            & \th_1 & \cdot \\
            &              &  \cdot & \cdot  \\
            &              &           & \cdot &  \phi_2 \\
            &              &           &          & \th_{d-1} & \phi_1  \\
      \text{\bf 0}     &               &           &          &          & \th_d
  \end{pmatrix}
&
 \begin{pmatrix}
   \th^*_d & & & & & \text{\bf 0}   \\
       1  & \th^*_{d-1} \\
           & 1     & \cdot  \\
           &        & \cdot & \cdot \\
           &        &          & \cdot & \th^*_1  \\
     \text{\bf 0}   &       &          &          &  1   & \th^*_0
  \end{pmatrix}
\\
\{\eta_i(A) v^*_0\}_{i=0}^d         \rule{0mm}{12ex}
& &
\begin{pmatrix}
   \th_d & & & & & \text{\bf 0}   \\
       1  & \th_{d-1} \\
           & 1     & \cdot  \\
           &        & \cdot & \cdot \\
           &        &          & \cdot & \th_1  \\
     \text{\bf 0}   &       &          &          &  1   & \th_0
  \end{pmatrix}
&
  \begin{pmatrix}
     \th^*_0 & \phi_1 & & & & \text{\bf 0}   \\
                & \th^*_1 & \phi_2  \\
                &            & \cdot & \cdot  \\
                &             &         & \cdot & \cdot \\
               &             &           &         & \th^*_{d-1} & \phi_d \\
    \text{\bf 0} &       &            &          &                & \th^*_d
  \end{pmatrix}
\\
\{\tau^*_i(A^*) v_d \}_{i=0}^d       \rule{0mm}{12ex}
& &
  \begin{pmatrix}
    \th_d & \phi_1 & & & & \text{\bf 0}  \\
            & \th_{d-1} & \phi_2 \\
            &              &  \cdot & \cdot  \\
            &              &           & \cdot &  \cdot \\
            &              &           &          & \th_1 & \phi_d  \\
      \text{\bf 0}     &               &           &          &          & \th_0
  \end{pmatrix}
&
 \begin{pmatrix}
   \th^*_0 & & & & & \text{\bf 0}   \\
       1  & \th^*_1 \\
           & 1     & \cdot  \\
           &        & \cdot & \cdot \\
           &        &          & \cdot & \th^*_{d-1}  \\
     \text{\bf 0}   &       &          &          &  1   & \th^*_d
  \end{pmatrix}
\\
\{\tau_i(A) v^*_d\}_{i=0}^d           \rule{0mm}{12ex}
& &
 \begin{pmatrix}
   \th_0 & & & & & \text{\bf 0}   \\
       1  & \th_1 \\
           & 1     & \cdot  \\
           &        & \cdot & \cdot \\
           &        &          & \cdot & \th_{d-1}  \\
     \text{\bf 0}   &       &          &          &  1   & \th_d
  \end{pmatrix}
&
  \begin{pmatrix}
     \th^*_d & \phi_d & & & & \text{\bf 0}   \\
                & \th^*_{d-1} & \cdot  \\
                &            & \cdot & \cdot  \\
                &             &         & \cdot & \phi_2 \\
               &             &           &         & \th^*_1 & \phi_1 \\
    \text{\bf 0} &       &            &          &                & \th^*_0
  \end{pmatrix}
\end{array}
\]

\begin{theorem}    \label{thm:matrixT}     \samepage
\ifDRAFT {\rm thm:matrixT}. \fi
Assume that $\Phi$ is self-dual.
Then with respect to each basis  \eqref{eq:4bases}
the matrix representing $T$ is
\begin{equation}
\frac{\vphi_1 \cdots \vphi_d} {\tau_d(\th_d) \eta_d(\th_0) } \,
  \begin{pmatrix}
    \text{\bf 0} & & & & & \phi_1 \cdots \phi_d       \\
     & & & & \cdot \\
     & & & \cdot \\
     & & \phi_1 \phi_2 \\
     & \phi_1 \\
    1 & & & & & \qquad\;\;  \text{\bf 0}
  \end{pmatrix}.                                               \label{eq:mat}
\end{equation}
\end{theorem}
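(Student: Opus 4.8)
The plan is to compute the action of $T$ on each of the four bases in \eqref{eq:4bases} directly, using Proposition \ref{prop:TEivs0}, and then observe that the resulting matrices all coincide. The key point is that the four bases in \eqref{eq:4bases} are paired by $T$: by Proposition \ref{prop:TEivs0} we have $T\eta^*_i(A^*)v_0 = \alpha\,\eta_i(A)v^*_0$ and $T\eta_i(A)v^*_0 = \alpha^*\,\eta^*_i(A^*)v_0$, so $T$ swaps the first two bases up to the scalars $\alpha,\alpha^*$; similarly $T$ swaps the last two bases up to the scalars $\beta,\beta^*$. Since $T$ maps each basis to a scalar multiple of another basis in the list (in reversed-index order), computing the matrix of $T$ with respect to, say, $\{\eta^*_i(A^*)v_0\}_{i=0}^d$ requires expressing $\eta_i(A)v^*_0$ back in terms of $\{\eta^*_j(A^*)v_0\}_{j=0}^d$, which is exactly what Lemma \ref{lem:trans1b} provides (equation \eqref{eq:etasd-iAsv02}).

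First I would handle the basis $u=\{\eta^*_i(A^*)v_0\}_{i=0}^d$. Apply $T$ to $u_i=\eta^*_i(A^*)v_0$: by Proposition \ref{prop:TEivs0}, $Tu_i = \alpha\,\eta_i(A)v^*_0$. Now invert \eqref{eq:etasd-iAsv02}: writing $j=d-i$, that equation reads $\eta^*_j(A^*)v_0 = \frac{\eta^*_d(\th^*_0)}{\phi_1\cdots\phi_{d-j}}\,\frac{\bbig{v_0,v^*_0}}{\bbig{v^*_0,v^*_0}}\,\eta_{d-j}(A)v^*_0$, so $\eta_i(A)v^*_0 = \eta_{d-j}(A)v^*_0$ with $j=d-i$ is a scalar multiple of $\eta^*_{d-i}(A^*)v_0 = u_{d-i}$; explicitly $\eta_i(A)v^*_0 = \frac{\phi_1\cdots\phi_i}{\eta^*_d(\th^*_0)}\,\frac{\bbig{v^*_0,v^*_0}}{\bbig{v_0,v^*_0}}\,u_{d-i}$. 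Multiplying by $\alpha = \frac{\vphi_1\cdots\vphi_d}{\tau_d(\th_d)}\,\frac{\bbig{v_0,v^*_0}}{\bbig{v^*_0,v^*_0}}$ the inner-product ratios cancel, leaving $Tu_i = \frac{\vphi_1\cdots\vphi_d}{\tau_d(\th_d)\eta^*_d(\th^*_0)}\,\phi_1\cdots\phi_i\;u_{d-i}$. Since $\Phi$ is self-dual, $\th^*_j=\th_j$ for all $j$, hence $\eta^*_d(\th^*_0)=\eta_d(\th_0)$, and the prefactor becomes exactly $\frac{\vphi_1\cdots\vphi_d}{\tau_d(\th_d)\eta_d(\th_0)}$. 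The $(d-i,i)$ entry of the matrix of $T$ is therefore $\frac{\vphi_1\cdots\vphi_d}{\tau_d(\th_d)\eta_d(\th_0)}\,\phi_1\cdots\phi_i$, which matches \eqref{eq:mat}: for $i=0$ the entry in position $(d,0)$ is $1$ times the prefactor, for $i=d$ the entry in position $(0,d)$ is $\phi_1\cdots\phi_d$ times the prefactor, and the intermediate antidiagonal entries are $\phi_1,\phi_1\phi_2,\dots$ as displayed.

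Then I would repeat this for the other three bases. For $\{\eta_i(A)v^*_0\}_{i=0}^d$ one uses $T\eta_i(A)v^*_0=\alpha^*\eta^*_i(A^*)v_0$ together with \eqref{eq:etasd-iAsv02} again (now solved the other way), and the scalars $\alpha^*=\frac{\vphi_1\cdots\vphi_d}{\tau_d(\th_d)}\,\frac{\bbig{v_0,v^*_0}}{\bbig{v_0,v_0}}$ combine with the reciprocal ratio to give the same antidiagonal; here one additionally invokes \eqref{eq:00s}, which in the self-dual case reads $\frac{\bbig{v_0,v_0}\bbig{v^*_0,v^*_0}}{\bbig{v_0,v^*_0}^2}=\frac{\eta_d(\th_0)^2}{\phi_1\cdots\phi_d}$, and the identity $\phi_i=\phi_{d-i+1}$ from \eqref{eq:phi}, to reconcile the product $\phi_d\cdots\phi_{d-i+1}$ appearing with the product $\phi_1\cdots\phi_i$ in \eqref{eq:mat}. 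For $\{\tau^*_i(A^*)v_d\}_{i=0}^d$ and $\{\tau_i(A)v^*_d\}_{i=0}^d$ one uses the rows of Proposition \ref{prop:TEivs0} involving $\beta,\beta^*$ and equation \eqref{eq:tausd-iAsvd2} of Lemma \ref{lem:trans1b}, together with \eqref{eq:dds} and again \eqref{eq:phi}. In each case the inner-product data cancels completely and the parameter-array prefactor collapses to $\frac{\vphi_1\cdots\vphi_d}{\tau_d(\th_d)\eta_d(\th_0)}$ by self-duality ($\th_i=\th^*_i$, whence $\tau_d(\th_d)=\tau^*_d(\th^*_d)$ and $\eta_d(\th_0)=\eta^*_d(\th^*_0)$).

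The main obstacle is purely bookkeeping: making sure that the four different pairings, each with its own scalar ($\alpha$, $\alpha^*$, $\beta$, or $\beta^*$) and its own transition formula from Lemma \ref{lem:trans1b}, all reduce to the \emph{same} antidiagonal matrix \eqref{eq:mat}. This hinges on the self-duality relations \eqref{eq:thths} and \eqref{eq:phi}, which force the symmetry $\phi_i=\phi_{d-i+1}$ and the equalities among $\tau_d(\th_d),\tau^*_d(\th^*_d)$ and $\eta_d(\th_0),\eta^*_d(\th^*_0)$; without these the four matrices would differ. I expect no genuine difficulty beyond organizing these cancellations carefully, so the cleanest write-up treats the first basis in full detail and then says "the remaining three cases are similar, using \eqref{eq:phi} and the relevant rows of Proposition \ref{prop:TEivs0}."
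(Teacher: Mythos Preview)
Your proposal is correct and follows essentially the same approach as the paper: compute $T$ on the first basis $\{\eta^*_i(A^*)v_0\}_{i=0}^d$ via Proposition \ref{prop:TEivs0} and the transition formula \eqref{eq:etasd-iAsv02}, then handle the remaining three similarly. The one minor difference is in how the ``unstarred'' bases $\{\eta_i(A)v^*_0\}_{i=0}^d$ and $\{\tau_i(A)v^*_d\}_{i=0}^d$ are treated: you compute directly, invoking the bilinear-form identities \eqref{eq:00s}, \eqref{eq:dds} to cancel the inner-product ratios, whereas the paper instead applies the already-established formulas \eqref{eq:TetasiAsv0}, \eqref{eq:TtausiAsvd} to $\Phi^*$ and uses $T=T^*$ from Theorem \ref{thm:main}. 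The paper's route is a bit slicker (no bilinear-form identities needed), while yours is more self-contained (it does not rely on the equality $T=T^*$). Also, a small imprecision: for $\{\tau^*_i(A^*)v_d\}_{i=0}^d$ the inner products cancel directly as in the first case, so \eqref{eq:dds} is only needed for $\{\tau_i(A)v^*_d\}_{i=0}^d$, not both.
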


\begin{proof}
First consider the basis $\{\eta^*_i (A^*) v_0\}_{i=0}^d$.
By Proposition \ref{prop:TEivs0},
\begin{align*}
   T \eta^*_i (A^*) v_0  &= 
    \frac{\vphi_1 \cdots \vphi_d} { \tau_d(\th_d) } \,
    \frac{ \b{v_0,v^*_0} } { \b{v^*_0, v^*_0} } \,   \eta_i (A) v^*_0  && (0 \leq i \leq d).
\end{align*}
By \eqref{eq:thths} and \eqref{eq:etasd-iAsv02},
\begin{align*}
   \eta_i (A) v^*_0 &=
    \frac{\phi_1 \cdots \phi_i} { \eta_d(\th_0) } \,
    \frac{ \b{v^*_0, v^*_0} } { \b{v_0, v^*_0} } \,  \eta^*_{d-i} (A^*) v_0  && (0 \leq i \leq d).
\end{align*}
By these comments, 
\begin{align}
T \eta^*_i (A^*) v_0 &=
 \frac{\vphi_1 \cdots \vphi_d} { \tau_d(\th_d) \eta_d(\th_0) } \,
  \phi_1 \cdots \phi_i \, \eta^*_{d-i} (A^*) v_0          && (0 \leq i \leq d).      \label{eq:TetasiAsv0}
\end{align}
Thus the matrix \eqref{eq:mat} represents $T$ with respect to  $\{\eta^*_i (A^*) v_0\}_{i=0}^d$.

Next consider the basis  $\{\tau^*_i (A^*) v_d\}_{i=0}^d$.
In a similar way as above using \eqref{eq:phi} and \eqref{eq:tausd-iAsvd2},
we obtain 
\begin{align}
T \tau^*_i (A^*) v_d &=
\frac{\vphi_1 \cdots \vphi_d} { \tau_d(\th_d) \eta_d(\th_0) } \,
  \phi_1 \cdots \phi_i \, \tau^*_{d-i} (A^*) v_d   && (0 \leq i \leq d).             \label{eq:TtausiAsvd}
\end{align}
Thus the matrix \eqref{eq:mat} represents $T$ with respect to  $\{\tau^*_i (A^*) v_d\}_{i=0}^d$.

Next consider the basis $\{\eta_i(A) v^*_0\}_{i=0}^d$.
Apply \eqref{eq:TetasiAsv0} to $\Phi^*$, and use Lemma \ref{lem:Phis}.
This gives
\begin{align*}
 T^* \eta_i (A) v^*_0 &=
  \frac{\vphi_1 \cdots \vphi_d } { \tau^*_d (\th^*_d) \eta^*_d (\th^*_0) } \,
  \phi_d \cdots \phi_{d-i+1} \, \eta_{d-i}(A) v^*_0         && (0 \leq i \leq d).
\end{align*}
We have $T^*=T$ by Theorem \ref{thm:main}.
By this and \eqref{eq:thths}, \eqref{eq:phi}, 
the above line becomes
\begin{align*}
T \eta_i (A) v^*_0 &=
 \frac{\vphi_1 \cdots \vphi_d} { \tau_d(\th_d) \eta_d(\th_0) } \,
  \phi_1 \cdots \phi_i \, \eta_{d-i} (A) v^*_0      &&  (0 \leq i \leq d).
\end{align*}
Thus the matrix \eqref{eq:mat} represents $T$ with respect to $\{\eta_i(A) v^*_0\}_{i=0}^d$.

Next consider the basis $\{\tau_i (A) v^*_d\}_{i=0}^d$.
Apply \eqref{eq:TtausiAsvd} to $\Phi^*$, and use Lemma \ref{lem:Phis}.
Simplify the result in a similar way as above to get
\begin{align*}
T \tau_i (A) v^*_d &=
\frac{\vphi_1 \cdots \vphi_d} { \tau_d(\th_d) \eta_d(\th_0) } \,
  \phi_1 \cdots \phi_i \, \tau_{d-i} (A) v^*_d    && (0 \leq i \leq d).   
\end{align*}
Thus the matrix \eqref{eq:mat} represents $T$ with respect to $\{\tau_i (A) v^*_d\}_{i=0}^d$.
\end{proof}

\bigskip

{

\small

}

\end{document}